\newcommand{\nc}{\newcommand}
\newcommand{\rc}{\renewcommand}
\definecolor{b}{rgb}{.1,.1,.7}
\definecolor{rr}{rgb}{.8,0,.3}
\definecolor{g}{rgb}{0,.5,0}
\definecolor{pp}{rgb}{.5,0,.7}
\definecolor{r}{rgb}{.6,0,.3}
\definecolor{y}{rgb}{.9,.99,.9}
\newcommand{\bbb}{\textcolor{b}}
\newcommand{\bblock}{\begin{block}}
\newcommand{\eblock}{\end{block}}
\def\R{\mathbb{R}}
\newcommand{\bit}{\begin{itemize}}
\newcommand{\een}{\end{enumerate}}
\newcommand{\eit}{\end{itemize}}
\newcommand{\ed}{\end{document}}
\nc{\Aut}{{	\operatorname{Aut}	}}
\nc{\codim}{{	\operatorname{codim}	}}
\nc{\Ob}{{	\operatorname{Ob}	}}
\nc{\PGL}{{	\operatorname{PGL}	}}
\nc{\supp}{{	\operatorname{supp}	}}
\nc{\tr}{{	\operatorname{tr}	}}
\nc{\Rep}{{	{\cal{R}}ep		}}
\nc{\one}{{	\mbox{\bf{1}}		}}
\nc{\iso}{	\overset{\sim}{\lra}	}
\nc{\nen}{\newenvironment}
\nc{\pr}{\protect}
\nc{\nn}{{\newline}}
\nc{\np}{{\newpage}}	
\nc{\lab}{	\label}
\nc{\npp}{{	\newpage\setcounter{page}{0}	}}
\nc{\setpa}{		\setcounter{part}		}
\nc{\setse}{		\setcounter{section}	}
\nc{\setsus}{		\setcounter{subsection}		}
\nc{\setsss}{		\setcounter{subsubsection}	}
\nc{\setpage}{		\setcounter{page}	}
\nc{\nfd}{ $$\text{ This version is preliminary and approximate, 
		             it is not for distribution. }$$	}
\nc{\noi}{{\noindent}}
\nc{\pf}{{	\noindent {\em Proof.}		}}
\nc{\epf}{ \fbox{\bf QED}	}
\nc{\heart}{{\tiny \cen{\tiny $\heartsuit $ }	}} 
\nc{\cont}{\tableofcontents}
\nc{\sbr}{{	\smallpagebreak	}}
\nc{\mbr}{{	\medpagebreak	}}
\nc{\bbr}{{	\bigpagebreak	}}
\nc{\bib}{		}
\rc{\b}{ 	\big         			}  
\nc{\lam}[1]{{ 	\text{\large $#1$	}	}}  
\nc{\smm}[1]{{ 	\text{\small $#1$	}	}}  
\nc{\fom}[1]{{ 	\text{\footnotesize $#1$	}	}}  
\nc{\tinm}[1]{{ \text{\tiny $#1$	}	}}  
\nc{\bu}{ \bullet         }  			
\nc{\bbu}{ \aa{\bbb \bullet}         }  	
\nc{\bus}{{	^\bullet	}}	 	
\nc{\bui}{{	_\bullet	}}	 	
\nc{\bem}{{	\begin{em}	}}
\nc{\eem}{{	\end{em} 	}}
\nc{\bbox}{{	\blackbox	}}	
\nc{\bx}{	\boxed	}		
\nc{\tbx}[1]{{\boxed{\tx{#1}}}}		
\nc{\mmbox}[1]{{	\mbox{$#1$}	}}	
\nc{\tbox}[1]{{		\mbox{\tx{#1}}	}}
\nc{\ot}{		\leftarrow			}
\nc{\tto}{		\longrightarrow			}
\nc{\ott}{		\longleftarrow			}
\nc{\too}[1]{{		\aa{#1}\rightarrow			}}
\nc{\oot}[1]{{		\aa{#1}\leftarrow			}}
\nc{\ttoo}[1]{{		\aa{#1}\longrightarrow			}}
\nc{\oott}[1]{{		\aa{#1}\longleftarrow			}}
\nc{\Too}[2]{{		\aa{#1}{\bb{#2}\rightarrow}		}}
\nc{\ooT}[2]{{		\aa{#1}{\bbb{#2}\leftarrow}		}}
\nc{\TToo}[2]{{		\aa{#1}{\bb{#2}\longrightarrow}		}}
\nc{\ooTT}[2]{{		\aa{#1}{\bbb{#2}\longleftarrow}		}}
\nc{\toot}[2]{{		\aa{#1}{\bb{#2}\rightleftarrows}	}}
\nc{\ttoot}[2]{{	\aa{#1}{\bb{#2}\rightleftrightarrows}	}}
\nc{\ra}{{	\rightarrow		}}
\nc{\laa}{{	\leftarrow	}}	
\nc{\lra}{{\longrightarrow}}
\nc{\lr}{{\leftrightarrow}}     	
\nc{\lrs}{{\rightleftarrows}}     	
\nc{\imp}{{\Rightarrow}}        	
\nc{\impp}{{\Leftarrow}}        	
\nc{\eq}{{\Leftrightarrow}}        	
\nc{\impl}{{\Longrightarrow}}        	
\nc{\imppl}{{\Longleftarrow}}        	
\nc{\eql}{{\Longleftrightarrow}}        	
	\nc{\Ra}{{\Rightarrow}}         	
	\nc{\LRa}{{\Leftrightarrow}}        	
\nc{\inj}{{\pr	\hookrightarrow	}}    		
\nc{\injj}{{\pr	\hookleftarrow	}}    		
\nc{\sur}{{	\twoheadrightarrow	}}	
\nc{\surr}{{	\twoheadleftarrow	}}	
\nc{\mm}{{	\mapsto		}}     		
\nc{\mmm}{{	\leftarrow\shortmid }}		
\nc{\ainj}[1]{{\aa{#1}{\pr\hookrightarrow}	}}    	
\nc{\ainjj}[1]{{\aa{#1}{\pr\hookleftarrow}	}}    	
\nc{\asur}[1]{{	\aa{#1}\twoheadrightarrow	}}	
\nc{\asurr}[1]{{\aa{#1}\twoheadleftarrow	}}	
\nc{\amm}[1]{{	\aa{#1}\mapsto		}}     	
\nc{\ammm}[1]{{	\aa{#1}\leftarrow\shortmid }}	
\nc{\syp}[1]{	^{ (#1) }		} 	
\nc{\up}[1]{	^{ (#1) }		} 	
\nc{\lp}[1]{	_{ (#1) }		}	
\nc{\hp}[1]{	^{ [#1] }		}	
\nc{\cle}{\preceq}		
\nc{\cl}{\prec}			
\nc{\cge}{\succeq}		
\nc{\cg}{\succ}			
\nc{\bb}{	\pr\underset 	}           
\rc{\aa}{ 	\pr\overset 	}            
\nc{\indd}{{ ${} \ \ \ \ \  \ \        {} $	}}	
\nc{\inddd}{{ 	\indd\indd			}}	
\nc{\nnd}{{ 	\nn  \indd 			}}	
\nc{\nndb}{{ 	\nn  \indd $\bullet$		}}	
\nc{\bce}{	\begin{center}	}
\nc{\ece}{	\end  {center}	}
\nc{\cen}[1]{	\begin{center}	{  #1}	\end  {center}	}
\nc{\bss}{{\backslash}}           		
\nc{\barr}{ 	\overline 	}      		
\nc{\ud}{	\underline	}		
\nc{\ti}{\tilde}              
\nc{\tii}{\widetilde}         
\nc{\hatt}{\widehat}				
\nc{\hata}{{	\bbb{ \hat{} }		}}	
\nc{\ch}{\check}              			
\nc{\cha}{{ 	\bbb{ \check{} }	}}      
\nc{\sub}{{	\subseteq	}}         
\nc{\subb}{{	\supseteq	}}         
\nc{\nsub}{{	\nsubseteq	}}         
\nc{\nsubb}{{	\nsupseteq	}}         %
\nc{\nin}{{	\notin	}}
\nc{\lb}{\langle}             				
\nc{\rb}{\rangle}
\nc{\lB}{	\left(	}             			
\nc{\rB}{	\right)	}
\nc{\BBl}{{	\bbb{ \left( \right.}	}}             	
\nc{\BBr}{{	\bbb{ \left. \right)}	}}
\nc{\Pa}[2]{ {\lb} #1 {,} #2 {\rb} }				
\nc{\cD}[1]{ \tx{ $$\CD {#1} \endCD $$ }  }		
\nc{\mat} {		\left(		\matrix	}	
\nc{\emat}{		\endmatrix	\right)	}
\nc{\sm} {		\left(		\smallmatrix	}	
\nc{\esm}{		\endsmallmatrix	\right)	}
\nc{\smat} {		\left(		\smallmatrix	}	
\nc{\esmat}{		\endsmallmatrix	\right)	}
\nc{\matr} {		\left[		\matrix	}	
\nc{\ematr}{		\endmatrix	\right]	}
\nc{\smr} {		\left[		\smallmatrix	}	
\nc{\esmr}{		\endsmallmatrix	\right]	}
\nc{\smatr} {		\left[		\smallmatrix	}	
\nc{\esmatr}{		\endsmallmatrix	\right]	}
\nc{\imat} {		\left.		\matrix	}	
\nc{\eimat}{		\endmatrix	\right.	}
\nc{\ism} {		\left.		\smallmatrix	}	
\nc{\eism}{		\endsmallmatrix	\right.	}
\nc{\ca}{		\left\{		\smallmatrix	}	
\nc{\eca}{		\endsmallmatrix	\right\}	}
\nc{\Ca}{		\left\{		\matrix		}	
\nc{\Eca}{		\endmatrix	\right.		}	
\nc{\eCa}{		\endmatrix	\right\}	}	
\nc{\com}{	\begin{diagram}	}
\nc{\ecom}{	  \end{diagram}	}
\nc{\tab}{	\begin{tabular}		}
\nc{\etab}{	\end{tabular}		}	
\nc{\hl}{{	\hline			}}
\nc{\Eq}{	\begin{equation}	}
\nc{\Eeq}{	\end{equation}	}
\nc{\aln}{	\begin{align}	}
\nc{\ealn}{	\end{align}	}
\nc{\Rpart}{	\rc{\thepart}{\Roman{part}}	}
\nc{\Apart}{	\rc{\thepart}{\arabic{part}}	}
\nc{\rref}[2]{\ref{#1}.\ref{#2}}
\nc{\pa}[1]{ 	\part{#1}		}
\nc{\se}[1]{ 	\section{\bf#1}		}
\nc{\ses}[1]{ 	\section*{\bf#1}		}
\nc{\sus}{ 	\subsection		}
\nc{\sss}{ 	\subsubsection		}
\nc{\Lem}{ 	\subsection{Lemma}		}
\nc{\lem}{ 	\subsubsection{Lemma}		}
\nc{\slem}{ 	\subsubsection*{Lemma}		}
\nc{\sublem}{ 	\subsubsection{ Sublemma}	}
\nc{\ssublem}{ \subsubsection*{ Sublemma}	}
\nc{\Lemm}{ 	\subsection{Lemma}		}
\nc{\lemm}{ 	\subsubsection{Lemma}		}
\nc{\slemm}{ 	\subsubsection*{Lemma}		}
\nc{\sublemm}{ 	\subsubsection{ Sublemma}	}
\nc{\ssublemm}{ \subsubsection*{ Sublemma}	}
\nc{\Pro}{ 	\subsection{Proposition}	}
\nc{\pro}{ 	\subsubsection{Proposition}	}
\nc{\spro}{ 	\subsubsection*{Proposition}	}
\nc{\Cor}{ 	\subsection{Corollary}		}
\nc{\cor}{ 	\subsubsection{Corollary}	}
\nc{\scor}{ 	\subsubsection*{Corollary}	}
\nc{\Corr}{ 	\subsection{Corollary}		}
\nc{\corr}{ 	\subsubsection{Corollary}	}
\nc{\scorr}{ 	\subsubsection*{Corollary}	}
\nc{\Theo}{ 	\subsection{Theorem}		}		
\nc{\theo}{ 	\subsubsection{Theorem}		}
\nc{\stheo}{ 	\subsubsection*{Theorem}	}
\nc{\pretheo}{ 	\subsubsection{Pretheorem}	}
\nc{\rem}{ 	\subsubsection{Remark}		}
\nc{\srem}{ 	\subsubsection*{Remark}	}
\nc{\rems}{ 	\subsubsection{Remarks}		}
\nc{\srems}{ 	\subsubsection*{Remarks}	}
\nc{\Def}{ 	\subsection{Definition}		}
\nc{\ddef}{ 	\subsubsection{Definition}	}
\nc{\comm}{ 	\subsubsection{Comment}		}
\nc{\scomm}{ 	\subsubsection*{Comment}	}
\nc{\comms}{ 	\subsubsection{Comments}		}
\nc{\scomms}{ 	\subsubsection*{Comments}	}
\nc{\claim}{ 	\subsubsection{Claim}		}
\nc{\sclaim}{ 	\subsubsection*{Claim}	}
\nc{\nota}{ 	\subsubsection{Notation}	}
\nc{\conj}{ 	\subsubsection{Conjecture}	}
\nc{\sconj}{ 	\subsubsection*{Conjecture}	}
\nc{\ex}{ 	\subsubsection{Example}		}
\nc{\sex}{ 	\subsubsection*{Example}	}
\nc{\exs}{ 	\subsubsection{Examples}	}
\nc{\sexs}{ 	\subsubsection*{Examples}	}
\nc{\Ex}{ 	\subsection{Example}		}
\nc{\sEx}{ 	\subsection*{Example}	}
\nc{\Exs}{ 	\subsection{Examples}	}
\nc{\sExs}{ 	\subsection*{Examples}	}
\nc{\que}{ 	\subsubsection{Question}	}
\nc{\ques}{ 	\subsubsection{Questions}	}
\nc{\sque}{ 	\subsubsection*{Question}	}
\nc{\sques}{ 	\subsubsection*{Questions}	}
\nc{\bi}{	\begin{itemize}\item		}
\rc{\i}{	\item			}
\nc{\ei}{ \end{itemize}	} 
\nc{\ben}{	\begin{enumerate}\item		}
\nc{\ftt}[1]{{\footnote{#1}}}
\nc{\fttt}[1]{{$^($\footnote{#1}$^)$}}
\nc{\bftt}[1]{\footnote{#1}}
\nc{\f}[1]{ \fbox{$ $}\footnote{ \fbox{!}#1 }\fbox{$ $}		}
\nc{\Ao}{{	\A^1	}}
\nc{\Po}{{	\P^1	}}
\nc{\So}{{	S^1	}}
\nc{\h}{{	\hslash	}}	
\nc{\All}{{	\forall		}}
\nc{\Exx}{{	\exists 	}}
\nc{\yy}{\infty}                       
\nc{\ys}{{  \frac{\infty}{2}  }}
\nc{\ii}{{i\in I}}
\nc{\ww}{{w\in W}}
\nc{\SES}[5]{{	0 @>>> {#1} @>{#2}>> {#3} @>{#4}>> {#5} @>>> 0	}}
\nc{\Ses}[3] {{	0 @>>> {#1} @>>>     {#2} @>>>     {#3} @>>> 0	}}
\nc{\pl}{{\oplus}}              		
\nc{\tim}{{\times}}             
\nc{\btim}{{\boxtimes}}
\nc{\ltim}{\ltimes}                  	%
\nc{\rtim}{\rtimes}			%
\nc{\ltr}{\triangleleft}        %
\nc{\rtr}{\triangleright}       %
\nc{\ten}{{	\otimes		}}            
\nc{\Lten}{{	\aa{L}\otimes	}}            
\nc{\Ltim}{{	\aa{L}\times	}}            
\nc{\Lcap}{{	\aa{L}\cap	}}            
\nc{\tenA}{	\bb{A}\ten	}
\nc{\tenB}{	\bb{B}\ten	}
\nc{\tenZ}{	\bb{\Z}\ten	}
\nc{\tenR}{	\bb{\R}\ten	}
\nc{\tenC}{	\bb{\C}\ten	}
\nc{\tenk}{	\bb{\k}\ten	}
\nc{\bten}{{\boxtimes}}         		
\nc{\con}{{ @>{\protect\cong}>> }}  	
\nc{\conl}{{ 	@>{\cong}>>	}}  	
\nc{\conn}{{    @<{\cong}<<  	}}  	
\nc{\Con}{{	\equiv		}}	
\nc{\appr}{{	\sim		}}	
\nc{\eqr}{{	\sim		}}	
\nc{\equi}{{	\sim		}}	
\nc{\fra}{ 	\frac	}     	
\nc{\ffr}[2]{{ 	\text{\footnotesize $\frac{#1}{#2}$	}	}}  
\nc{\ha}{{ \frac{1}{2} }}     		
	\nc{\half}{{ \frac{1}{2} }}    	
\nc{\ci}{{\circ}}               
\nc{\cd }{{\cdot}}            	
\nc{\cddd}{{\cdot\cdot\cdot}}	
\nc{\ox}{{	\OO_X		}}               
\nc{\omx}{{	\om_X		}}               
\nc{\Omx}{{	\Om_X^1		}}               
\nc{\Coh}{{	\CC oh		}}               %
\nc{\qcoh}{{	q\CC oh		}}               %
\nc{\xt}{{	X_*(T)		}}
\nc{\Xt}{{	X^*(T)		}}
\nc{\cfm}{{	co\fm		}}	
\nc{\cupp}{\bigcup}             
\nc{\capp}{\bigcap}
\nc{\pll}{\bigoplus}
\nc{\pii}{\prod}                
\nc{\ppii}{\bigprod}            
\nc{\cci}{\sqcup}              
\nc{\ccii}{\bigsqcup}
\nc{\wwe}{\bigwedge}            
\nc{\cce}{\bigcoprod}           
\nc{\aaa}{	\stackerel	}	
\nc{\edd}{{ \end{document}	}} 
\nc{\tx}{	\text		}		
\nc{\df}{{ \protect\overset{ \text{def}}= 	}}		
\nc{\dff}{{ \ \df\				}}		
\nc{\inv}{{ {}^{-1}      }}			
\nc{\thh}{	^{\text{th}}	}                     	
\nc{\st}{	^{\text{st}}	}                     	
\nc{\nd}{	^{\text{nd}}	}                     	
\nc{\rd}{	^{\text{rd}}	}                     	
\nc{\pmo}{{ 	\pm 1		}}
\nc{\mpo}{{ 	\mp 1		}}
\nc{\htt}{  \text{ht}}				
\nc{\emp}{{   \emptyset}}      			
\nc{\cowe}{{	\vee	}}			
\nc{\we}{{\wedge}}				
\nc{\wee}{{	\aa{\bullet}\wedge	}}		
\nc{\wetwo}{{     \pr\overset{2}\wedge       }}	
\nc{\limp}{{	\pr\underset {\leftarrow} \lim		}}	
\nc{\Limp}{{	\pr\underset {\leftarrow} {\bbb\lim}	}}	
\nc{\limi}{{	\pr\underset {\rightarrow}\lim		}}      
\nc{\Limi}{{	\pr\underset {\rightarrow}{\bbb\lim}	}}	
\nc{\llim}[1]{	 \bb{#1}\lim        	}   
\nc{\llimp}[1]{ \bb{#1}{ \pr\underset {\leftarrow} \lim       } }
\nc{\LLimp}[1]{ \bb{#1}{ \pr\underset {\leftarrow} {\bbb\lim} } } 	
\nc{\llimi}[1]{ \bb{#1}{ \pr\underset {\rightarrow}\lim       } }
\nc{\LLimi}[1]{ \bb{#1}{ \pr\underset {\rightarrow}{\bbb\lim} } }	
\let\Bbb\mathbb
\nc{\ppn}{{ {\Bbb P}^n }}            		
\nc{\pt}{	{ \text{pt} }	}		
\nc{\qlb}{{ \barr{{\Bbb Q}_l} }}      		
\nc{\ffq}{{  {\Bbb F}_q  }}           		
\nc{\ffp}{{  {\Bbb F}_p  }}           		
\nc{\tw}{   {}^{(1)}	}		
\nc{\Ab}{{ 	\AA b 		}}      		%
\nc{\Set}{{ 	\SS et 		}}      		%
\nc{\Top}{{ 	\TT op 		}}      		%
\nc{\Pic}{{ 	\tx{Pic}	}}      		%
\nc{\del}{{\partial }}
\nc{\delb}{{\partial }}
\nc{\dd}[2]{	\fra{d{#1}}{d{#2}}		}
\nc{\ddel}[2]{	\fra{\del{#1}}{\del {#2}}	}
\nc{\Spec}{{ 	\text{Spec}      		}} 
\nc{\Specf}{{ 	\text{Specf}      		}} 
\nc{\Spf}{{ 	\text{Spf}      		}} 
\nc{\hk}{{     \text{hyperk\"ahler} 	}}
\nc{\susy}{{\text{supersymmetry}}}
\nc{\ie}{{,\ \     \text{i.e.,}\ \ 	}}
\nc{\iif}{{\ \     \text{if}\ \ 	}}
\nc{\aand}{{\ \ \  \text{and}\ \ \ 	}}
\nc{\hence}{{\ \ \ \text{hence}\ \ \ 	}}
\nc{\while}{{\ \ \ \text{while}\ \ \ 	}}
\nc{\with}{{\ \ \  \text{with}\ \ \ 	}}
\nc{\oor}{{\ \     \text{or}\ \ 	}}
\nc{\foor}{{\ \     \text{for}\ \ 	}}
\nc{\suchthat}{{\ \     \text{such that}\ \ 	}}
\nc{\rk}{{\operatorname{rk}}}
\nc{\Ker}{{\operatorname{Ker}}}
\nc{\Coker}{{\operatorname{Coker}}}
\rc{\Im}{{ 	\text{Im} 	}}
\nc{\rank}{{	\ \text{rank} 	}}
\nc{\Res}{{	\  \text{Res}   }}
\nc{\Hom}{{\operatorname{Hom}}}
\nc{\End}{{	\text{End}	}}
\nc{\RHom}{{	\text{RHom}	}}
\nc{\HHom}{{	\text{$\HH$om}	}}
\nc{\RHHom}{{	\text{R$\HH$om} }}
\nc{\RGa}{{	\text{R$\Ga$}	}}
\nc{\EEnd}{{	\text{$\EE nd$}	}}
\nc{\AAut}{{	\text{$\AA ut$}	}}
\nc{\Ext}{{\operatorname{Ext}}}
\nc{\Tor}{{\operatorname{Tor}}}
\nc{\Der}{{	\text{Der}	}}
\nc{\ord	}{{ \text{ord} }}			
\nc{\divv	}{{ \text{div} }}			
\nc{\Lie	}{{ \text{Lie} }}
\nc{\timA} {{   \pr\underset{A}\tim             }}
\nc{\timB} {{   \pr\underset{B}\tim             }}
\nc{\timC} {{   \pr\underset{C}\tim             }}
\nc{\timG} {{   \pr\underset{G}\tim             }}
\nc{\timH} {{   \pr\underset{H}\tim             }}
\nc{\timN} {{   \pr\underset{N}\tim             }}
\nc{\timP}{{    \pr\underset{P}\tim             }}
\nc{\timQ}{{    \pr\underset{Q}\tim             }}
\nc{\timS} {{   \pr\underset{S}\tim             }}
\nc{\timT} {{   \pr\underset{T}\tim             }}
\nc{\timU} {{   \pr\underset{U}\tim             }}
\nc{\timV} {{   \pr\underset{V}\tim             }}
\nc{\timX} {{   \pr\underset{X}\tim             }}
\nc{\timY} {{   \pr\underset{Y}\tim             }}
\nc{\timZ} {{   \pr\underset{Z}\tim             }}
\nc{\ab}{{       ^{\text{ab}}   		}}
\nc{\af}{{       ^{\text{aff}}  		}}
\nc{\cod}{\text{codim}}	
\rc{\AA}{{\cal A}}
\nc{\BB}{{\cal B}} 
\nc{\CC}{{\cal C}}
\nc{\DD}{{\cal D}}
\nc{\EE}{{\cal E}}
\nc{\FF}{{\cal F}}
\nc{\GG}{{\cal G}}
\nc{\HH}{{\cal H}}
\nc{\II}{{\cal I}}
\nc{\JJ}{{\cal J}}
\nc{\KK}{{\cal K}}
\nc{\LL}{{\cal L}}
\nc{\MM}{{\cal M}}
\nc{\NN}{{\cal N}}
\nc{\OO}{{\cal O}}
\nc{\PP}{{\cal P}}
\nc{\QQ}{{\cal Q}}
\nc{\RR}{{\cal R}}
\rc{\SS}{{\cal S}}
\nc{\TT}{{\cal T}}
\nc{\UU}{{\cal U}}
\nc{\VV}{{\cal V}}
\nc{\WW}{{\cal W}}
\nc{\ZZ}{{\cal Z}}
\nc{\XX}{{\cal X}}
\nc{\YY}{{\cal Y}}
\nc{\A}{{\Bbb A }}
\nc{\B}{{\Bbb B}}
\nc{\C}{{\Bbb C}}
		\nc{\cc}{{\Bbb C}}
\nc{\Cs}{{\Bbb C^*}}
		\nc{\cs}{{\Bbb C^*}}
		\nc{\ccs}{{\Bbb C^*}}
\nc{\D}{{\Bbb D}}
\nc{\E}{{\Bbb E}}
\nc{\F}{{\Bbb F}}
\nc{\G}{{\Bbb G}}
	\nc{\hH}{{\Bbb H}}
\nc{\I}{{\Bbb I}}
\nc{\J}{{\Bbb J}}
\nc{\K}{{\Bbb K}}
	\nc{\lL}{{\Bbb L}}
\nc{\M}{{\Bbb M}}
\nc{\N}{{\Bbb N}}
	\nc{\oO}{{\Bbb O}}
	\nc{\pP}{{\Bbb P}}      
\nc{\Q}{{\Bbb Q}}
	\nc{\sS}{{\Bbb S}}
\nc{\T}{{\Bbb T}}
\nc{\U}{{\Bbb U}}
\nc{\V}{{\Bbb V}}
\nc{\W}{{\Bbb W}}
\nc{\Z}{{\Bbb Z}}
\nc{\X}{{\Bbb X}}
\nc{\Y}{{\Bbb Y}}
\let\P\pP
\nc{\fA}{{\frak A}}
\nc{\fB}{{\frak B}}
\nc{\fC}{{\frak C}}
\nc{\fD}{{\frak D}}
\nc{\fE}{{\frak E}}
\nc{\fF}{{\frak F}}
\nc{\fG}{{\frak G}}
\nc{\fH}{{\frak H}}
\nc{\fI}{{\frak I}}
\nc{\fJ}{{\frak J}}
\nc{\fK}{{\frak K}}
\nc{\fL}{{\frak L}}
\nc{\fM}{{\frak M}}
\nc{\fN}{{\frak N}}
\nc{\fO}{{\frak O}}
\nc{\fP}{{\frak P}}
\nc{\fQ}{{\frak Q}}
\nc{\fR}{{\frak R}}
\nc{\fS}{{\frak S}}
\nc{\fT}{{\frak T}}
\nc{\fU}{{\frak U}}
\nc{\fV}{{\frak V}}
\nc{\fW}{{\frak W}}
\nc{\fZ}{{\frak Z}}
\nc{\fX}{{\frak X}}
\nc{\fY}{{\frak Y}}
\nc{\fa}{{\frak a}}
\nc{\fb}{{\frak b}}
\nc{\fc}{{\frak c}}
\nc{\fd}{{\frak d}}
\nc{\fe}{{\frak e}}
\nc{\ff}{{\frak f}}
\nc{\fg}{{\frak g}}
\nc{\fh}{{\frak h}}
\nc{\fiI}{{\frak i}}  
	\nc{\ffi}{{\frak i}}  
\nc{\fj}{{\frak j}}
\nc{\fk}{{\frak k}}
\nc{\fl}{{\frak{l}}}
\nc{\fm}{{\frak m}}
\nc{\fn}{{\frak n}}
\nc{\fo}{{\frak o}}
\nc{\fp}{{\frak p}}
\nc{\fq}{{\frak q}}
\nc{\fr}{{\frak r}}
\nc{\fs}{{\frak s}}
\nc{\ft}{{\frak t}}
\nc{\fu}{{\frak u}}
\nc{\fv}{{\frak v}}
\nc{\fw}{{\frak w}}
\nc{\fz}{{\frak z}}
\nc{\fx}{{\frak x}}
\nc{\fy}{{\frak y}}
\nc{\al}{{\alpha }}
\nc{\be}{{\beta }}
\nc{\ga}{{\gamma }}
\nc{\de}{{\delta }}
\nc{\ep}{{\varepsilon }}
\nc{\vap}{{\epsilon }}
\nc{\ze}{{\zeta }}
\nc{\et}{{\eta }}
\rc{\th}{{\theta }}
\nc{\vth}{{\vartheta }}
\nc{\io}{{\iota }}
\nc{\ka}{{\kappa }}
\nc{\la}{{\lambda }}
\nc{\vpi}{{	\varpi		}}
\nc{\vrho}{{	\varrho		}}
\nc{\si}{{	\sigma 		}}
\nc{\ups}{{	\upsilon 	}}
\nc{\vphi}{{	\varphi 	}}
\nc{\om}{{	\omega 		}}
\nc{\Ga}{{\Gamma }}
\nc{\De}{{\Delta }}
\nc{\nab}{{\nabla}}
\nc{\na}{{\nabla}}
\nc{\Th}{{\Theta }}
\nc{\La}{{\Lambda }}
\nc{\Si}{{\Sigma }}
\nc{\Ups}{{\Upsilon }}
\nc{\Om}{{\Omega }}
\nc{\Aa}{{	\text{A}	}}
\nc{\Bb}{{	\text{B}	}}
\nc{\Cc}{{	\text{C}	}}
\nc{\Dd}{{	\text{D}	}}
\nc{\Ee}{{	\text{E}	}}
\nc{\Ff}{{	\text{F}	}}
\nc{\Gg}{{	\text{G}	}}
\nc{\Hh}{{	\text{H}	}}
\nc{\Ii}{{	\text{I}	}}
\nc{\Jj}{{	\text{J}	}}
\nc{\Kk}{{	\text{K}	}}
\nc{\Ll}{{	\text{L}	}}
\nc{\Mm}{{	\text{M}	}}
\nc{\Nn}{{	\text{N}	}}
\nc{\Oo}{{	\text{O}	}}
\nc{\Pp}{{	\text{P}	}}
\nc{\Qq}{{	\text{Q}	}}
\nc{\Rr}{{	\text{R}	}}
\nc{\Ss}{{	\text{S}	}}
\nc{\Tt}{{	\text{T}	}}
\nc{\Uu}{{	\text{U}	}}
\nc{\Vv}{{	\text{V}	}}
\nc{\Ww}{{	\text{W}	}}
\nc{\Zz}{{	\text{Z}	}}
\nc{\Xx}{{	\text{X}	}}
\nc{\Yy}{{	\text{Y}	}}
\nc{\bGa}{{	\bbb{\Ga}	}}
\nc{\bA}{{	\bbb{A}		}}
\nc{\bB}{{	\bbb{B}		}}
\nc{\bC}{{	\bbb{C}		}}
\nc{\bD}{{	\bbb{D}		}}
\nc{\bE}{{	\bbb{E}	}}
\nc{\bF}{{	\bbb{F}	}}
\nc{\bG}{{	\bbb{G}	}}
\nc{\bH}{{	\bbb{H}	}}
\nc{\bI}{{	\bbb{I}	}}
\nc{\bJ}{{	\bbb{J}	}}
\nc{\bK}{{	\bbb{K}	}}
\nc{\bL}{{	\bbb{L}	}}
\nc{\bM}{{	\bbb{M}	}}
\nc{\bN}{{	\bbb{N}	}}
\nc{\bO}{{	\bbb{O}	}}
\nc{\bP}{{	\bbb{P}	}}
\nc{\bQ}{{	\bbb{Q}	}}
\nc{\bR}{{	\bbb{R}	}}
\nc{\bS}{{	\bbb{S}	}}
\nc{\bT}{{	\bbb{T}	}}
\nc{\bU}{{	\bbb{U}	}}
\nc{\bV}{{	\bbb{V}	}}
\nc{\bW}{{	\bbb{W}	}}
\nc{\bX}{{	\bbb{X}	}}
\nc{\bY}{{	\bbb{Y}	}}
\nc{\bZ}{{	\bbb{Z}	}}
\nc{\ba}{{	\bbb{a}	}}
			\nc{\bbbb}{{	\bbb{b}	}}
\nc{\bc}{{	\bbb{c}	}}
\nc{\bd}{{	\bbb{d}	}}
			\nc{\bbe}{{	\bbb{e}	}}
			\nc{\bbf}{{	\bbb{f}	}}
\nc{\bg}{{	\bbb{g}	}}
\nc{\bh}{{	\bbb{h}	}}
			\nc{\bbi}{{	\bbb{i}	}}
\nc{\bj}{{	\bbb{j}	}}
			\nc{\bbk}{{	\bbb{k}	}}
\nc{\bl}{{	\bbb{l}	}}
\nc{\bm}{{	\bbb{m}	}}
\nc{\bn}{{	\bbb{n}	}}
\nc{\bo}{{	\bbb{o}	}}
\nc{\bp}{{	\bbb{p}	}}
\nc{\bq}{{	\bbb{q}	}}
\nc{\br}{{	\bbb{r}	}}
\nc{\bs}{{	\bbb{s}	}}
\nc{\bt}{{	\bbb{t}	}}
			\nc{\bbbu}{{	\bbb{u}	}}
\nc{\bv}{{	\bbb{v}	}}
\nc{\bw}{{	\bbb{w}	}}
\nc{\bxx}{{	\bbb{x}	}}
\nc{\by}{{	\bbb{y}	}}
\nc{\bz}{{	\bbb{z}	}}
\nc{\sA}{{\mathsf A}}
\nc{\sB}{{\mathsf B}}
\nc{\sC}{{\mathsf C}}
\nc{\sD}{{\mathsf D}}
\nc{\sE}{{\mathsf E}}
\nc{\sF}{{\mathsf F}}
\nc{\sG}{{\mathsf G}}
\nc{\sH}{{\mathsf H}}
\nc{\sI}{{\mathsf I}}
\nc{\sJ}{{\mathsf J}}
\nc{\sK}{{\mathsf K}}
\nc{\sL}{{\mathsf L}}
\nc{\sM}{{\mathsf M}}
\nc{\sN}{{\mathsf N}}
\nc{\sO}{{\mathsf O}}
\nc{\sP}{{\mathsf P}}
\nc{\sQ}{{\mathsf Q}}
\nc{\sR}{{\mathsf R}}
\rc{\sS}{{\mathsf S}}
\nc{\sT}{{\mathsf T}}
\nc{\sU}{{\mathsf U}}
\nc{\sV}{{\mathsf V}}
\nc{\sW}{{\mathsf W}}
\nc{\sX}{{\mathsf X}}
\nc{\sY}{{\mathsf Y}}
\nc{\sZ}{{\mathsf R}}
\nc{\sa}{{\mathsf a}}
\rc{\sb}{{\mathsf b}}
\rc{\sc}{{\mathsf c}}
\nc{\sd}{{\mathsf d}}
\nc{\sg}{{\mathsf g}}
\nc{\sh}{{\mathsf h}}
\nc{\sj}{{\mathsf j}}
\nc{\sk}{{\mathsf k}}
\nc{\sn}{{\mathsf n}}
\nc{\so}{{\mathsf o}}
\nc{\sq}{{\mathsf q}}
\nc{\sr}{{\mathsf r}}
\nc{\su}{{\mathsf u}}
\nc{\sv}{{\mathsf v}}
\nc{\sw}{{\mathsf w}}
\nc{\sx}{{\mathsf x}}
\nc{\sy}{{\mathsf y}}
\nc{\sz}{{\mathsf z}}
\nc{\toc}{{ 	\small{\tableofcontents} }}
\nc{\addl}{	\addcontentsline{toc}{subsection}	}
\nc{\all}{{ ^{(\alpha)} }}
\nc{\bee}{{ ^{(\beta)} }}
\nc{\gaa}{{ ^{(\gamma)} }}
\nc{\nnnn}{{ ^{ ( n ) } }}     
\nc{\nnn}{{ ^{ [ n ] } }}     
\nc{\GK}{{  	G(\KK)		}}
\nc{\GO}{{  	G(\OO)		}}
\nc{\Kh}{\tx{Ka\"hler\ }}
\nc{\Khs}{\tx{Ka\"hler structure\ }}
\nc{\Khss}{\tx{Ka\"hler structures\ }}
\nc{\GKh}{\tx{Generalized Ka\"hler\ }}
\nc{\GKs}{\tx{Generalized Ka\"hler structure\ }}
\nc{\GKss}{\tx{Generalized Ka\"hler structures\ }}
\nc{\gKs}{\tx{Generalized Ka\"hler structure\ }}
\nc{\gKss}{\tx{Generalized Ka\"hler structures\ }}
\nc{\sYM}{\text{super Young-Mills\ }}
\nc{\tFT}{\text{topological Field Theory\ }}
\rc{\top}{\tx{topological\ }}
\rc{\Top}{\tx{Topological\ }}
\nc{\TFT}{\text{Topological Field Theory\ }}
\nc{\TQFT}{\text{Topological Quantum Field Theory\ }}
\nc{\TQFTs}{\text{Topological Quantum Field Theories\ }}
\nc{\QFT}{\text{Quantum Field Theory\ }}
\nc{\QFTs}{\text{Quantum Field Theories\ }}
\nc{\FT}{\text{Field Theory\ }}
\nc{\HM}{\text{Hitchin moduli\ }}
\nc{\Hf}{\text{Hitchin fibration\ }}
\nc{\Wi}{\text{Wilson\ }}
\nc{\Wo}{\text{Wilson operator\ }}
\nc{\Wos}{\text{Wilson operators\ }}
\nc{\tH}{\text{t'Hooft\ }}
\nc{\tHo}{\text{t'Hooft operator\ }}
\nc{\Ho}{\text{t'Hooft operator\ }}
\nc{\tHos}{\text{t'Hooft operators\ }}
\nc{\Hos}{\text{t'Hooft operators\ }}
\nc{\Sd}{\text{S-duality\ }}
\nc{\Ld}{\text{Langlands duality\ }}
\nc{\Be}{\text{Bogomolny equations\ }}
\rc{\d}{\text{duality\ }}
\nc{\He}{\text{Hecke\ }}
\nc{\Heo}{\text{Hecke operators\ }}
\nc{\Hem}{\text{Hecke modifications\ }}
\nc{\Bgs}{\tx{Bogomolny equations\ }}
\nc{\Bg}{\tx{Bogomolny equation\ }}
\nc{\Hk}{{\text{Hyperk$\ddot{a}$hler} }}
\nc{\eqq}{{ 	\ =\			}}
\nc{\Cy}{{ 	C_\yy	}}
\nc{\Ay}{{ 	A_\yy	}}
\nc{\Ly}{{ 	L_\yy	}}
\nc{\Cm}{{ 	C_m	}}
\nc{\Am}{{ 	A_m	}}
\nc{\Lm}{{ 	L_m	}}
\rc{\Cy}{{	Cat_\yy			}}
\nc{\Cey}{{	Cat^{ex}_\yy		}}
\nc{\Cyt}{{	\tii{Cat}_\yy		}}
\nc{\Gm}{{	G_m			}}
\nc{\Uo}{{	U(1)	}}
\nc{\sqt}{{	\sqrt{2}	}}
\nc{\BGB}{{B\bss G/B}}
\nc{\Glb}{{ \barr{\GG_\la}	}}
\nc{\mut}{{	\mu_2	}}
\nc{\dx}{{	\dot x		}}
\nc{\ddx}{{	\ddot x		}}
\nc{\dy}{{	\dot y		}}
\nc{\ddy}{{	\ddot y		}}
\nc{\du}{{	\dot u		}}
\nc{\ddu}{{	\ddot u		}}
\nc{\Cyy}{	C^\yy		}
\nc{\hh}{{	\hatt\fh		}}
\nc{\hhp}{{	\hatt\fh_+	}}
\nc{\hhm}{{	\hatt\fh_-	}}
\nc{\jh}{{	J_{\fh}		}}
\nc{\jhs}{{	J_{\fh}		}}
\nc{\negg}{{ _{<0} 	}}
\nc{\pp}{{ _{>0} 	}}
\nc{\zb}{{\bar z}}
\let\d\del
\nc{\dbar}{{\bar \del}}
\nc{\p}{\psi}
\nc{\pb}{{	\bar\psi	}}
\nc{\pd}{{	\dot\psi	}}
\nc{\pbd}{{	\dot{\bar\psi}	}}
\nc{\pos}{{	\tx{\tiny{po}}	}}
\nc{\mom}{{	\tx{\tiny{mo}}	}}
\nc{\vac}{{	\tx{\tiny{vac}}	}}
\nc{\mb}[1]{{	\mbox{$#1$}	}}
\nc{\GGG}{{	\bbb \GG	}}
\nc{\cdG}{{	\bb{G}\cd	}}
\nc{\cdGs}{{	\bb{G^*}\cd	}}
\rc{\l}{{	\bbb l		}}
\nc{\pms}{{	\{\pm\}	}}
\nc{\yh}{{	\hatt\yy	}}
\nc{\hy}{{	\hatt\yy	}}
\nc{\bpl}{{\boxplus}}
\nc{\bcd}{{\boxdot}}
\nc{\gau}{{	e^{-x^2/2}		 }}
\nc{\istp}{{	\fra{ 1 }{ \sqrt{2\pi} } }}
\nc{\stp}{{	\sqrt{2\pi}		 }}
\nc{\lrb}[2]{	\lb #1,#2\rb	}
\nc{\lrbb}[2]{	\lb #1|#2\rb	}
\rc{\sq}{{	\sqcup		}}
\nc{\Qg}{{	\Q_{\ge 0}	}}
\nc{\Rg}{{	\R_{\ge 0}	}}
\nc{\Qgg}{{\Q_{> 0}	}}
\nc{\Rgg}{{\R_{> 0}	}}
\nc{\dagg}{{\dagger}}
\nc{\raa}[1]{\xrightarrow{#1}}
\rc{\laa}[1]{\xleftarrow{#1} }
\begin{document}





\title[]{
Enriched Sets and Higher Categories
}

\author{	Bradley M. Willocks		}


\maketitle

\begin{abstract}                
We introduce the notion of an enriched set, as an abstraction of enriched categories, and a category of enriched sets. The set of enriched sets is itself described as a set enriched over the category of enriched sets. We introduce a method for the construction of sets enriched over the set of enriched sets from a given enriched set with some addition data, and for ``functors" from such enriched sets as should thereby arise to the enriched set of enriched sets.
\end{abstract}

\toc


\se{Introduction}


The present work is grown of a desire for a systematic description of methods by which one might reconfigure spaces of one type into spaces of another. To this end, we introduce in the present work "enriched sets," abstractions of categories, and a formalism by which they may be reconfigured into related enriched sets (``constellations") whose arrows are ``diagrams" in the original enriched set.  We furthermore construct (\ref{Lens}) ``functors" from sub-enriched sets of the reconfigured sets to the enriched set of enriched sets, whereby, in loose terms, ``an arrow in the reconfigured enriched set is sent to a functor from an arrow category over the domain to an arrow category over the codomain." 

In particular, we've constructed in other work a category of pointed categories and pointed correspondences (\cite{Wi}, the ``purely categorical" part of which forms the core of this work), and a formalism for the description of a category of ``locally affine sheaved spaces" as a subcategory thereof. This category is intended as a domain in which such categories (``geometries") might be extended and compared. With such geometries contained within a single category, and a plurality of arrows between them (of ``non-classical origin") one might construct various arrow categories, whose objects were arrows between such categories. The intention is that each object $x$ within a geometry should be attached to such an inter-geometric arrow category, and an arrow $x \rightarrow y$ (a morphism within a particular geometry) might be reconsidered as situated within a larger diagram (a ``constellation") in which the arrow categories of $x$ and $y$ might be mingled, depicting, for example, fibred products $u \times_{y} x$, with $u$ originating from the ``other" geometry (e.g. logarithmic structure as in \cite{Ab}, or divided power structure \cite{BO}; generally, alternate algebraic structure). Such an association would suggest associating to each $x$ a limit or colimit of the objects $u$ (by the proposition (\ref{FunctSys}) below). At the same time, one might associate to each $x$ the automorphisms $Aut(F)$ of a forgetful functor $((u \rightarrow x) \mapsto u)$, thinking of the Tannakian formalism of \cite{Del}.  It is hoped that such constructions might be useful in sytematically understanding the relationships between classical algebraic geometry of \cite{Hart}, the various $\mathbb{F}_{1}$ geometries described in \cite{LPL} (see the paths and bridges section), Berkovich spaces/non-archimedean geometry of \cite{Ta} or \cite{Con}, and spaces with modified structure sheaves as in \cite{Ab} or \cite{CFKap}.

Our intention is, that this work should constitute the categorical foundation for processes by which such geometries might be attached to, or subsumed within, enriched categories of diagrams (constellations) constituted possibly of objects and arrows from different geometries, from which they might inherit higher categorical structures (see (\ref{remNewEnrich}) or (\ref{Lens})) and homotopy invariants (from the Tannakian inspiration). Having described, in the other work, \cite{Wi}, a common category for a  somewhat general notion of geometry, we would inquire into the ``possibilities" regarding homotopy and cohomology theories, hoping, in particular, to extend such notions to alternate geometries, and to compare their different manifestations (we would hope for something like GAGA, \cite{Se}). 

Thinking of (co)homology, one meets with their plurality, and we are perhaps therefore inclined toward some adaptation of motivic cohomology, or some other formalism by which categories of ``schemes" or sheaves of some type (rings classically) might be enveloped by Abelian categories with translations, employing the $\kappa$-twists of \cite{Wi} to replace distinguished objects in $Sh(X,\mathfrak{Ring})$ by distinguished objects in some derived category. We hope in the future, based upon the present work (\ref{remNewEnrich}), to relate the higher categorical/homotopy data attached to a geometry to the Abelian data attached thereto (to adapt motivic cohomology of \cite{Vo} or \cite{Lev} to alternate geometries, using \cite{LurSt} to study the result).

$\bold{Notation}$. For that this work is mostly concerned with the consideration of composition laws and their variations, we denote by ``$f\cdot g$" the composition of functions $f$ with $g$, so that $f \cdot g : x \mapsto f(g(x))$. Brackets ``$\ulcorner$" and ``$\urcorner$" separate logical statements, where they are much manipulated. Otherwise, notation and general concepts are those of standard category theory (\cite{ML}, \cite{KS}).

\se{Enriched Sets}

We introduce the notion of sets enriched over a tensor category $(A,\otimes)$, which is an abstraction of that of a category, consisting essentially of compositon laws, which assign to each triple $(a,b,c) \in S$ of elements in a set an arrow in $A$, $\circ(a,b,c) : h(a,b) \otimes h(b,c) \rightarrow h(a,c)$. Out approach differs from that of \cite{Dub} primarily in the use of an extra datum, a ``skeleton functor" $sk : A \longrightarrow B$, to replace equality with ``equivalence."

\sus{A Variation on Limits ($(sk,e)$-limits)}

We define a notion of a limit of a functor $F : I \longrightarrow A$, with respect to a ``skeleton" functor $sk : A \longrightarrow B$, as the colimit of a domain functor, from a certain arrow category in a fibre product of a pair of categories of functors to $A$. The construction essentially takes the terminal object in the category of objects over the functor $F$ which are natural after the application of $sk$. 

\sss{The Use of $dob\downarrow_{(-)}$} Recall that $\Delta_{(J,A)} : A \longrightarrow Hom_{U-\mathfrak{Cat}^{2}(0)}(J,A)$ by sending an object $c$ to the $c$-valued constant functor, and $ob_{(Hom_{U-\mathfrak{Cat}^{2}(0)}(J,A))}(F \cdot e) : \star \longrightarrow Hom_{U-\mathfrak{Cat}^{2}(0)}(J,A)$ by sending the one arrow in $\star$ to $id_{F \cdot e}$. Recall also that the category $\downarrow_{(Hom_{U-\mathfrak{Cat}^{2}(0)}(J,A))} (\Delta_{(J,A)},ob_{Hom_{U-\mathfrak{Cat}^{2}(0)}(J,A)}(F\cdot e))$ of arrows is defined so that its objects are triples $(a,\alpha,\emptyset )$, where $a \in Ob(A)$, $\alpha : \Delta_{(J,A)}(a) \rightarrow F \cdot e$ is a natural transformation, and $\emptyset$ is the object in the category $\star$ (the category with one arrow). An arrow between $(a_{1},\alpha_{1},\emptyset ) \xrightarrow{(\phi,id_{\emptyset})} (a_{2},\alpha_{2},\emptyset )$ is a pair of arrows $(( a_{1} \xrightarrow{\phi} a_{2}) , id_{\emptyset} ) \in Arr(A) \times Arr(\star)$ for which $\alpha_{2} \cdot \Delta_{(J,A)(1)}(\phi) = \alpha_{1}$. 

An isomorphic category is given by forgetting both the $\emptyset$ symbol, and the $a$ term (since for any $j \in Ob(J)$, $a = dom(\alpha(j))$, so that $a$ is determined by $\alpha$), so that its objects are natural transformations $\alpha$, where $a \in Ob(A)$ and $\alpha : \Delta_{(J,A)}(a) \rightarrow F \cdot e$. If $\alpha_{1} : \Delta_{(J,A)}(a_{1}) \rightarrow F \cdot e$ and $\alpha_{2} : \Delta_{(J,A)}(a_{2}) \rightarrow F \cdot e$, then an arrow $\alpha_{1} \xrightarrow{\phi} \alpha_{2}$ is an arrow $( a_{1} \xrightarrow{\phi} a_{2}) \in Arr(A)$ for which $\alpha_{2} \cdot \Delta_{(J,A)(1)}(\phi) = \alpha_{1}$

\sss{$\bold{Definition}$ of an $(sk,e)$-Limit} \label{SkLim}

Consider functors
$J \xrightarrow{e} I \xrightarrow{F} A \xrightarrow{sk} B$. 

Consider the set of maps $\alpha : Ob(I) \rightarrow Arr(A)$ such that $sk \cdot \alpha$ defines a natural transformation from a diagonal functor to $sk\cdot F$. Let
$$
\mathcal{C} :_{t}= \downarrow_{(Hom_{U-\mathfrak{Cat}^{2}(0)}^{(1)}(I,B))} ( \Delta_{(I,B)} , ob_{(Hom_{U-\mathfrak{Cat}^{2}}^{(1)}( I,B ))}(sk\cdot F) )
$$

and
$$
\mathcal{D} :_{t}= \downarrow_{(Hom_{U-\mathfrak{Cat}^{2}(0)}^{(1)}(J,A))} ( \Delta_{(J,A)} , ob_{(Hom_{U-\mathfrak{Cat}^{2}}^{(1)}( J,A ))}(F\cdot e) )
$$

and
$$
\mathcal{E} :_{t}= \downarrow_{(Hom_{U-\mathfrak{Cat}^{2}(0)}^{(1)}(J,B))} ( \Delta_{(J,B)} , ob_{(Hom_{U-\mathfrak{Cat}^{2}}^{(1)}( J,B ))}(sk\cdot F\cdot e) )
$$

Let $\varepsilon : P :_{t}= \mathcal{C} \times_{\mathcal{E}} \mathcal{D} \longrightarrow \mathcal{D}$ be one of the arrows of a fibred product, an arrow in $U'-\mathfrak{Cat}$. If $For$ is the functor which takes the object $a$ from an arrow $\Delta_{(J,A)}(a) \longrightarrow F \cdot e$, an $(sk,e)$-limit is a colimit of $For \cdot \varepsilon
$. 

This is explained in the following sections.

\ref{SkLim}.1. Let $P$ be the full sub-category of the category 
$$
\downarrow_{(Hom_{U-\mathfrak{Cat}^{2}(0)}(J,A))} (\Delta_{(J,A)},ob_{Hom_{U-\mathfrak{Cat}^{2}(0)}(J,A)}(F\cdot e)) \subseteq Hom_{U-\mathfrak{Cat}^{2}(0)}(J,A)_{/ F \cdot e}
$$

whose objects are natural transformations $\alpha$, such that for some $a \in Ob(A)$ we have $\alpha : \Delta_{(J,A)}(a) \rightarrow F \cdot e$, such that there exists a natural transformation $\tilde{\alpha} : \Delta_{(I,B)}(sk(a))\rightarrow sk\cdot F$ such that the natural transformation $Hom_{U-\mathfrak{Cat}^{2}(1)}(id_{J},sk)(\alpha):  \Delta_{(J,B)}(sk(a)) \rightarrow sk \cdot F \cdot e$ given by sending $j \in Ob(J)$ to $sk(\alpha (j)) : sk(a) \rightarrow sk(F(e(j)))$ is equal to the natural transformation $Hom_{U-\mathfrak{Cat}^{2}(1)}(e,id_{B})(\tilde{\alpha}) : \Delta_{(J,B)}(sk(a)) \rightarrow sk \cdot F \cdot e$ given by sending $j \in Ob(J)$ to $\tilde{\alpha}(e(j)) : sk(a) \rightarrow sk(F(e(j)))$.

\ref{SkLim}.2. Denote by $\varepsilon : P \longrightarrow \downarrow_{(Hom_{U-\mathfrak{Cat}^{2}(0)}(J,A))}(\Delta_{(J,A)},ob_{(Hom_{U-\mathfrak{Cat}^{2}(0)}(J,A))}(F\cdot e))$ the inclusion, given by $\alpha \mapsto (a,\alpha,\emptyset )$. Denote also by $p$ the functor, 
$$
p :_{t}= dob\downarrow_{(Hom_{U-\mathfrak{Cat}^{2}(0)}(J,A))}(\Delta_{(J,A)},ob_{(Hom_{U-\mathfrak{Cat}^{2}(0)}(J,A))}(F \cdot e) \cdot \varepsilon : P \longrightarrow A.
$$

Thus, $p$ is given by sending $\alpha \mapsto a$, and the fibre of $p$ over any given $a \in Ob(A)$ is the set of natural transformations $\Delta_{(J,A)}(a) \xrightarrow{\alpha} F\cdot e $ such that for some natural transformation $\tilde{\alpha} : sk\cdot \Delta_{(I,A)}(a) = \Delta_{(I,B)}(sk(a)) \longrightarrow sk \cdot F$, one has
$$
Hom_{U-\mathfrak{Cat}^{2}(1)}(id_{B},e)(\tilde{\alpha}) = Hom_{U-\mathfrak{Cat}^{2}(1)}(id_{J},sk)(\alpha)
.$$

In other words, 
$$
P \subseteq \downarrow_{(Hom_{U-\mathfrak{Cat}^{2}(0}(J,A))}(\Delta_{(J,A)},ob_{(Hom_{U-\mathfrak{Cat}^{2}(0)}(J,A))}(F \cdot e)
$$ 

is the full subcategory which contains all objects $\alpha$ such that the image of $\alpha$ in \newline $Hom_{U-\mathfrak{Cat}^{2}(0)}(J,B)$ under the functor $Hom_{U-\mathfrak{Cat}^{2}(1)}(id_{J},sk)$ has a lift to \newline $Hom_{U-\mathfrak{Cat}^{2}(0)}(I,B)$ by the functor $Hom_{U-\mathfrak{Cat}^{2}(1)}(e,id_{B})$ (we denote this lift by $\tilde{\alpha}$).


\ref{SkLim}.3.
Then the $(sk,e)$-limit of $F$ is the colimit of $p$, i.e., for any pair $(l,\lambda) \in Ob(A) \times Arr(Hom_{U-\mathfrak{Cat}^{2}(0)}(P,A))$ for which $\lambda : p \rightarrow \Delta_{(P,A)}(l)$, we say that $(l,\lambda)$ is an $(sk)-limit(F)$ iff $(l,\lambda)$ is a $colimit(p)$ (in the sense in which $(\lambda,l)$ is a universal arrow in $Hom_{U-\mathfrak{Cat}^{2}(0)}(P,A)$, from $p$ to the constant functor $\Delta_{(P,A)} : A \longrightarrow Hom_{U-\mathfrak{Cat}^{2}(0)}(P,A)$).







 





\begin{ex}
In the above, if either $e$ or $sk$ is an identity functor, then the $(sk)$-limit of $F$ is the limit $(l,\lambda)$ of $F$, if the latter exists. 

If $e = id_{I}$, then $P$ consists of all natural transformations $\alpha : \Delta_{(I,A)}(a) \rightarrow F \cdot e = F$ for which there exists some lift $\tilde{\alpha} : \Delta_{(I,B)}(sk(a)) \rightarrow sk \cdot F$. But $\tilde{\alpha} = Hom_{U-\mathfrak{Cat}^{2}(1)}(sk,id_{I})(\alpha)$ would be such a lift. Therefore any $\alpha : \Delta_{(I,A)}(a) \rightarrow F$ has a lift. Furthermore, for any $i \in Ob(I)$, $\alpha(i) : a \rightarrow F(i)$, and for any $\beta : \Delta_{(I,A)}(b) \rightarrow F$, and any arrow $\phi : \alpha \rightarrow \beta$ in $P$, by definition of $P$, we have that $\beta(i) \cdot \phi = \alpha(i)$. Therefore, by the definition of a colimit, there exists a unique $\alpha_{l}(i) : l \rightarrow F(i)$ such that for any $(\Delta_{(I,A)}(a) \xrightarrow{\alpha} F) \in Ob(P)$, $\alpha(i) = \alpha_{l}(i) \cdot \lambda(\alpha)$. Since each colimit arrow $\alpha_{l}(i)$ is determined by the arrows $\alpha (i)$ which come from natural transformations $\alpha$, the assignment $\alpha_{l} = (i \mapsto \alpha_{l}(i))_{i \in Ob(I)}$ determines a natural transformation $\Delta_{(I,A)}(l) \rightarrow F$. Therefore $\alpha_{l} \in Ob(P)$. If the limit of $F$ exists, then it is isomorphic to a terminal object in $P$, $\alpha_{t} \in Ob(P)$. But by the above argument, this terminal object $\alpha_{t}$ determines a colimit arrow $\lambda (\alpha_{t}) : a_{t} \rightarrow l$, and being a terminal object in $P$ there is a unique arrow $e_{l} : l \rightarrow a_{t} = dom(\alpha_{t}(i))$ in $P$. By the definition of terminal objects, $e_{l} \cdot \lambda(\alpha_{t}) = id_{a_{t}}$ 
\end{ex}

\begin{lem}
(Inclusion, via right exactness) Given $sk, F, e : J \rightarrow I$, $\varepsilon : P \rightarrow$

$\downarrow_{(Hom_{(U-\mathfrak{Cat}^{2})(0)} (J, A) )}(\Delta_{(J,A)}, ob_{(Hom_{(U-\mathfrak{Cat}^{2})(0)}(J,A) )} (F \cdot e) )$, $l$, and $\lambda$ as above, suppose further that $sk$ is right exact, and $Ob(I) = Ob(J)$. For each $i \in Ob(I) = Ob(J)$, consider the arrow induced from the colimit $l$ to $F(i)$ by $\alpha \mapsto \alpha (i)$, where $(p,\alpha,\emptyset) \in Ob(P)$ is an object in $P$. Then this assignment determines an object $(l,\alpha_{l} , \emptyset ) \in Ob(P)$.

I.e. the $(sk)$-limit determines an object,
$$
(l, (j \mapsto \lambda ( ( (b,f,\emptyset) \mapsto f(j) )_{(b,f,\emptyset) \in Ob(P)} ) )_{j \in Ob(J)}, \emptyset ) \in Ob(P)
$$

in $P$.
\end{lem}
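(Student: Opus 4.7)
The plan is to exhibit a natural transformation $\alpha_l : \Delta_{(J,A)}(l) \to F \cdot e$ whose $j$-component is the colimit-induced arrow described in the statement, and then produce a lift $\tilde{\alpha_l} : \Delta_{(I,B)}(sk(l)) \to sk \cdot F$ meeting the compatibility condition for membership in $Ob(P)$. The two halves split along the obvious lines: $\alpha_l$ is built directly from the universal property of $(l,\lambda)$, while the construction of $\tilde{\alpha_l}$ needs the right exactness of $sk$ to identify $sk(l)$ with a colimit of $sk \cdot p$.

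For each $j \in Ob(J)$, I first observe that the family $\{f(j) : b \to F(e(j))\}_{(b,f,\emptyset) \in Ob(P)}$ is a cocone on $p$, since any arrow $\phi : (b_1,f_1,\emptyset) \to (b_2,f_2,\emptyset)$ in $P$ satisfies $f_2(j) \cdot \phi = f_1(j)$ by the defining relation $f_2 \cdot \Delta_{(J,A)(1)}(\phi) = f_1$. The universal property of $(l,\lambda)$ then supplies a unique $\alpha_l(j) : l \to F(e(j))$ with $\alpha_l(j) \cdot \lambda(b,f,\emptyset) = f(j)$. Naturality of $\alpha_l$ in $j$ comes from a second application of this uniqueness clause: for $\psi : j_1 \to j_2$ in $J$, both $F(e(\psi)) \cdot \alpha_l(j_1)$ and $\alpha_l(j_2)$ factor the cocone $\{f(j_2)\}_{(b,f,\emptyset)}$ (since $F(e(\psi)) \cdot f(j_1) = f(j_2)$ by the naturality of each $f$), so they must coincide.

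For the lift, right exactness of $sk$ is used to conclude that $sk(l)$ is a colimit of $sk \cdot p$ with coprojections $sk(\lambda(-))$. For each $(b,f,\emptyset) \in Ob(P)$, fix a witness $\tilde{f} : \Delta_{(I,B)}(sk(b)) \to sk \cdot F$ provided by the definition of $P$; under the standing identification $Ob(I) = Ob(J)$ (with $e$ acting as the identity on objects), its object components necessarily satisfy $\tilde{f}(i) = sk(f(i))$. Applying $sk$ to the $A$-cocone relations shows that $\{\tilde{f}(i)\}_{(b,f,\emptyset)}$ is, for each $i \in Ob(I)$, a $B$-cocone on $sk \cdot p$, whose unique factorization through $sk(l)$ I take as $\tilde{\alpha_l}(i) : sk(l) \to sk(F(i))$. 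Naturality of $\tilde{\alpha_l}$ with respect to arrows of $I$ (as opposed to just arrows of $J$) is the part that uses the full strength of having a genuine lift on all of $I$: for $\chi : i_1 \to i_2$ in $I$, the naturality of each $\tilde{f}$ on $I$ gives $sk(F(\chi)) \cdot \tilde{f}(i_1) = \tilde{f}(i_2)$, so both $sk(F(\chi)) \cdot \tilde{\alpha_l}(i_1)$ and $\tilde{\alpha_l}(i_2)$ factor the common cocone $\{\tilde{f}(i_2)\}_{(b,f,\emptyset)}$, forcing equality by the uniqueness of the factorization through $sk(l)$.

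Finally, the lift compatibility $Hom_{U-\mathfrak{Cat}^{2}(1)}(id_J, sk)(\alpha_l) = Hom_{U-\mathfrak{Cat}^{2}(1)}(e, id_B)(\tilde{\alpha_l})$ reduces on $j \in Ob(J)$ to $sk(\alpha_l(j)) = \tilde{\alpha_l}(e(j)) = \tilde{\alpha_l}(j)$, and this follows once more by uniqueness, since both sides factor the cocone $\{sk(f(j))\}_{(b,f,\emptyset)}$ through $sk(l)$. The main obstacle I anticipate is keeping control of the arrows of $I$ that are not in the image of $e$; the existence clause in the definition of $P$ is arranged precisely so that uniqueness in the colimit factorization through $sk(l)$ transports naturality on all of $I$ from each individual $\tilde{f}$ to $\tilde{\alpha_l}$ itself.
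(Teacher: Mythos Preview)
Your proof is correct and follows the same approach as the paper's, which is a one-sentence sketch: right exactness of $sk$ makes $sk(l)$ a colimit of $sk\cdot p$, so that arrows out of $sk(l)$ are determined by their pullbacks along the $sk(\lambda(-))$, and those pullbacks are the $sk(f(j))$, which commute after composition with $sk\cdot F$ precisely because each $f$ admits a lift $\tilde f$ on $I$. You have unpacked this carefully, including the naturality checks; the one cosmetic point is that once $Ob(I)=Ob(J)$ with $e$ the identity on objects, the compatibility condition forces $\tilde f(i)=sk(f(i))$ componentwise, so ``fix a witness'' is not really a choice.
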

\begin{proof}
If the colimit $(l,\lambda)$ is sent to the colimit of the forward composition by $sk$ of the $dob\downarrow$ diagram on $P$, then arrows from $l$ to it are yet determined by their pullbacks to the components of the forward composition of the colimit diagram, which commute after forward composition.
\end{proof}

\begin{lem}
(Uniqueness, via monic) For any $sk : A \rightarrow B, F : I \rightarrow A \in Arr(U-\mathfrak{Cat})$, for any $(l, \lambda) \in Ob(A) \times Arr( Hom_{U-\mathfrak{Cat}^{2}(0)} (P,A) )$, ($P$ being as above) if the arrow from the $(sk)$-limit$(F)$ to the product $\prod_{j\in J} F(j)$ induced by the arrows from the previous lemma, i.e. $\lambda_{\prod}( (j \mapsto \lambda ( ((b,f,\emptyset) \mapsto f(j) )_{(b,f,\emptyset) \in Ob(P)}) ) )_{j \in Ob(J)} ) ) : l \rightarrow \prod_{j \in Ob(J)} F_{(0)}\cdot e_{(0)} (j)$, is monic, then for each $(b,f,\emptyset) \in Ob(P)$, the coproduct arrow $b \rightarrow l$ is the unique arrow $\lambda'$ for which $\lambda ( ( (b,f,\emptyset) \mapsto f(j) )_{(b,f,\emptyset) \in Ob(P)} ) \cdot \lambda' = f(j)$.
\end{lem}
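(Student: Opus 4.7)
The plan is to exploit the universal property of the product $\prod_{j \in Ob(J)} F(e(j))$ together with the monic hypothesis. First I would unpack notation: the ``coproduct arrow'' $\lambda' : b \to l$ attached to $(b,f,\emptyset) \in Ob(P)$ is the component of the colimiting cocone $\lambda$ at this object, and the ``canonical'' arrows $l \to F(e(j))$ are, by the preceding lemma, the components $\lambda\bigl(((b,f,\emptyset) \mapsto f(j))_{(b,f,\emptyset)\in Ob(P)}\bigr)$. Calling these arrows $\pi_{j} : l \to F(e(j))$ for brevity, the asserted relation reads $\pi_{j} \cdot \lambda' = f(j)$ for every $j \in Ob(J)$.

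Second, I would invoke the universal property of the product: by definition of $\lambda_{\prod}$, composing $\lambda_{\prod} : l \to \prod_{j} F(e(j))$ with the $j$-th projection yields $\pi_{j}$. Thus for any $\lambda'' : b \to l$ satisfying $\pi_{j} \cdot \lambda'' = f(j)$ for all $j$, the composite $\lambda_{\prod} \cdot \lambda''$ has $j$-th projection equal to $f(j)$ for every $j$, and similarly for $\lambda_{\prod} \cdot \lambda'$. The uniqueness clause of the universal property of the product then forces $\lambda_{\prod} \cdot \lambda' = \lambda_{\prod} \cdot \lambda''$.

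Third, I would apply the monic hypothesis: since $\lambda_{\prod}$ is assumed monic, the equation $\lambda_{\prod} \cdot \lambda' = \lambda_{\prod} \cdot \lambda''$ implies $\lambda' = \lambda''$, establishing uniqueness.

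The existence of such a $\lambda'$ is supplied by the previous lemma, which furnishes a distinguished arrow $\lambda'$ (namely the $(b,f,\emptyset)$-component of the colimit cocone) with $\pi_{j} \cdot \lambda' = f(j)$ for each $j \in Ob(J)$, so only uniqueness need be addressed here. The main subtlety, rather than any obstacle, is simply translating between the two descriptions of the $\pi_{j}$'s (as colimit-induced arrows through the diagram on $P$, and as projection factors of $\lambda_{\prod}$); once the identification $\pi_{j} = \mathrm{pr}_{j} \cdot \lambda_{\prod}$ is recorded, the argument is formal.
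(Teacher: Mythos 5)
Your argument is correct and is exactly the reasoning the paper has in mind: the paper's own proof of this lemma is just the word ``Trivial,'' and the intended content is precisely your observation that componentwise agreement $\pi_j\cdot\lambda'=\pi_j\cdot\lambda''$ forces $\lambda_{\prod}\cdot\lambda'=\lambda_{\prod}\cdot\lambda''$ by the universal property of the product, whereupon the monic hypothesis cancels $\lambda_{\prod}$. Your identification $\pi_j=\mathrm{pr}_j\cdot\lambda_{\prod}$ and the appeal to the preceding (inclusion) lemma for existence are the right bookkeeping; nothing further is needed.
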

\begin{proof}
Trivial.
\end{proof}

\begin{rem}
This is the uniqueness of factorization usually associated to limits. 
\end{rem}

\sss{$\bold{Definition}$ of the Skeleton Functor}\label{SCat}
Define the category $U-\mathfrak{SCat} \in Ob(U'-\mathfrak{Cat})$ so that
$$
Ob(U-\mathfrak{SCat}) = Ob(U-\mathfrak{Cat})
$$ 

and for any $x,y \in Ob(U-\mathfrak{SCat})$, $Hom_{(U-\mathfrak{SCat})} (x,y)$ is the the set
$$
Hom_{(U-\mathfrak{SCat})} (x,y) := \{ [F] \subseteq Ob(Hom_{U-\mathfrak{Cat}^{2}(0)}(x,y)); F \in Hom_{U-\mathfrak{Cat}}(x,y) \}
$$ 

of isomorphism classes of functors $x \xrightarrow{F} y$, where $[F] = [G]$ iff $F \cong G$, i.e. iff there exists an isomorphism $F \xrightarrow{\alpha} G$ of functors.

Define the functor 
$$
Skel : U-\mathfrak{Cat} \rightarrow U-\mathfrak{SCat}
$$

so that $Skel$ is the identity map on the objects and the quotient map $F \mapsto [F]$ on the arrows.

\begin{ex}
Consider $\phi,\psi \in Arr(U-\mathfrak{Cat})$, with the same codomain. The ($Skel$)-limit of the diagram is the subcategory $L$ of $dom(\phi) \times_{U-\mathfrak{Cat}} dom(\psi)$ such that $Arr(L) = \{ f \in Arr(dom(\phi)\times_{U-\mathfrak{Cat}}dom(\psi) ; \exists u,v \in Arr(codom(\phi)), u,v \text{ are isomorphisms and } u \cdot \pi_{\phi}(f) = \pi_{\psi}(f) \cdot v \}$. Any category with such functors into the two domain categories that the composition of functors on one side is isomorphic to the composition of functors on the other side factors through $L$ via the compositions of the projections with the embedding into the product. By the monic lemma the factorization is unique. However the conclusion of the inclusion lemma might not apply to it, i.e. the two compositions $L \rightarrow dom(\phi) \rightarrow codom(\phi)$ and $L \rightarrow dom(\psi) \rightarrow codom(\psi) = codom(\phi)$ might not be isomorphic, since I might imagine having two different pairs of arrows $(f_{1},g_{1})$, and $(f_{2},g_{2})$, such that the isomorphisms $u_{1},v_{1} \in Arr(codom(\phi))$ which form the commuting square $u_{1} \cdot f_{1} = g_{1} \cdot v_{1}$ differ from the isomorphisms $u_{2}, v_{2} \in Arr(codom(\phi))$ which form the commuting square $u_{2}\cdot f_{2} = g_{2} \cdot v_{2}$. 
\end{ex}

\sss{Lemma, for Reduction to the Standard Limit}
If 
$$
(l, (j \mapsto \lambda ( ( (b,f,\emptyset) \mapsto f(j) )_{(b,f,\emptyset) \in Ob(P)} ) )_{j \in Ob(J)}, \emptyset ) \in Ob(P)
$$

and the limit arrows are unique then this is the usual limit.

\begin{proof}
Trivial.
\end{proof}

\sss{Functoriality}\label{LimFun} An arrow of functors $F \cdot e \rightarrow G \cdot e$ which lifts to an arrow of functors $sk \cdot F \rightarrow sk \cdot G$ (i.e. an arrow in the fibred product of the two functors $Hom_{U-\mathfrak{Cat}^{2}(1)}(e,id_{B})$ and $Hom_{U-\mathfrak{Cat}^{2}(1)}(id_{J},sk)$ ) induces a map from the $(sk,e)$-limit of $F$ to that of $G$, using the colimit map. I.e. $\alpha : F \rightarrow G$ implies that $\alpha (dom(\phi)) \cdot F(\phi) = G(\phi) \cdot \alpha(codom(\phi))$, so that for any arrow $\beta : \Delta_{(J,C)(0)}(c) \rightarrow F\cdot e$ associated to $(a,\beta,\emptyset) \in Ob(P)$ (notation as in the first definition), $Hom_{U-\mathfrak{Cat}^{2}(1)}(e,id_{A})(\alpha) \cdot \beta$ also commutes after applying $sk$ (i.e. comes from an arrow in $Hom_{U-\mathfrak{Cat}^{2}(0)}(I,B)$). Therefore each such $a$ has an arrow into the $sk$-limit of $G$ from the colimit diagram of the definition, which induces a map from the colimit diagram which determines the $sk$-limit of $F$.

\ref{LimFun}.1. Given a diagram $F : I' \longrightarrow Hom_{U-\mathfrak{Cat}^{2}(0)}(I,A)$, and a choice of an $(sk,e)$-limit $(l(i),\lambda(i))$ for any object $i \in Ob(I')$, the construction of (\ref{LimFun}) determines a function $Arr(I') \longrightarrow Arr(A)$

\ref{LimFun}.2. If for any $i \in Ob(I')$, the $(sk,e)$-limit $(l(i),\lambda(i))$ is included in $P(i)$ ($P(i)$ being as in the definition of the $(sk,e)$-limit for $F(i)$) then (\ref{LimFun}.1) determines a functor $I' \longrightarrow A$.

\begin{rem}
Roughly speaking, one takes the colimit of the domains of all limit diagrams on the trivial category which, when forwards composed with $sk$, are the backwards composition by $e$ of an actual limit diagram of $sk \circ F$. Definition (2.3) following this remark is dual to Definition (2.1).
\end{rem}

\sss{$\bold{Definition}$ of the $(sk)$-Colimit}\label{SkColim} Consider functors $J \xrightarrow{e} I \xrightarrow{F} A \xrightarrow{sk} B$.

\ref{SkColim}.1. Let $P$ be the full sub-category of the category 
$$
\downarrow_{(Hom_{U-\mathfrak{Cat}^{2}}(J,A))} (ob_{Hom_{U-\mathfrak{Cat}^{2}}(J,A)}(F\cdot e),\Delta_{(J,A)}) \subseteq Hom_{U-\mathfrak{Cat}^{2}(0)}(J,A)_{ \backslash F \circ e}
$$ 

of arrows, whose objects are given by natural transformations from functor $F \cdot e$
to functor $\Delta_{(J,A)}(a)$, i.e. triples $(\emptyset,\alpha,a)$ for varying $a \in Ob(A)$, such that there exists a natural transformation $\tilde{\alpha}$ from functor $sk\cdot F$ to functor $\Delta_{(I,B)}(a)$ such that the natural transformation from $sk \cdot F$ to $\Delta_{(J,B)}(sk(a))$ is equal to the natural transformation given by sending $j \in Ob(J)$ to $\tilde{\alpha}(e(j))$, i.e. by the set
$$
\{ \alpha : \Delta_{(J,A)}(p) \xrightarrow{\alpha} F\cdot e;
$$
$$
\exists \tilde{\alpha} : sk\cdot \Delta_{(I,A)}(p) = \Delta_{(I,B)}(sk(p)) \longrightarrow sk \cdot F, Hom_{U-\mathfrak{Cat}^{2}(1)}^{(1)}(id_{B},e)(\tilde{\alpha}) = \alpha \}
,$$ 


so as to be given by the category of arrows from the diagonal functor to the object functor of $F \cdot e$ in the category of functors from $J$ to $A$.

\ref{SkColim}.2. Suppose that $\varepsilon : P \longrightarrow \downarrow_{(Hom_{U-\mathfrak{Cat}^{2}}(J,A))}(\Delta_{(J,A)},ob_{(Hom_{U-\mathfrak{Cat}^{2}}(J,A))}(F\cdot e))$ is the inclusion.

\ref{SkColim}.3.
For any $sk : A \rightarrow B, F : I \rightarrow A \in Arr(U-\mathfrak{Cat})$, $codom(F) = dom(sk)$ implies that any pair $(l,\lambda ) \in Ob(A) \times Arr(Hom_{(U-\mathfrak{Cat}^{2})(0)} (A,U-\mathfrak{Set}) )$, $(l,\lambda)$ is a $(sk,e)-colimit(F)$ iff $(l,\lambda)$ is a limit $(cob\downarrow_{(Hom_{(U-\mathfrak{Cat}^{2})(0)} (J,A) )}$

$( ob_{(Hom_{(U-\mathfrak{Cat}^{2})(0)} (J,A) )} (F \cdot e) , \Delta_{(J,A)} ) \cdot \varepsilon_{c} )$.

\begin{lem}
(Inclusion via exactness) Dual to the above.
\end{lem}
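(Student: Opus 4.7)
The plan is to dualize the statement and proof of the preceding \emph{Inclusion via right exactness} lemma. By the systematic replacement of colimits/codomains/right-exactness with limits/domains/left-exactness (and swapping source and target natural transformations), the statement becomes: if $sk$ is left exact and $Ob(I) = Ob(J)$, then the $(sk,e)$-colimit $(l,\lambda)$ of $F$ (the limit of $cob\downarrow_{(Hom_{U-\mathfrak{Cat}^{2}(0)}(J,A))}(ob_{(\cdots)}(F \cdot e),\Delta_{(J,A)}) \cdot \varepsilon_{c}$, as in \ref{SkColim}.3) determines an object $(\emptyset,\alpha_{l},l) \in Ob(P)$ of the category $P$ from \ref{SkColim}.1, whose component $\alpha_{l}(j) : F(j) \rightarrow l$ for each $j \in Ob(J)$ is obtained by factoring the cone of vertex $F(j)$ defined by $(\emptyset,\alpha,a) \mapsto \alpha(j)$ through the limit $l$.

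First I would check that evaluation at a fixed $j \in Ob(J) = Ob(I)$ does yield a cone from $F(j)$ over $cob\downarrow \cdot \varepsilon_{c}$: for any arrow $(\emptyset,\alpha,a) \xrightarrow{\phi} (\emptyset,\alpha',a')$ in $P$, the defining condition $\phi \cdot \alpha(j) = \alpha'(j)$ gives exactly the cone compatibility. The universal property of the limit then produces a unique $\alpha_{l}(j) : F(j) \rightarrow l$, and as $j$ varies the family $\{\alpha_{l}(j)\}_{j}$ assembles into a natural transformation $F \cdot e \rightarrow \Delta_{(J,A)}(l)$ by the standard uniqueness argument for maps induced by a limit applied to the naturality squares of $F \cdot e$.

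The step I expect to be the main obstacle is producing the lift $\tilde{\alpha}_{l} : sk \cdot F \rightarrow \Delta_{(I,B)}(sk(l))$ witnessing that $(\emptyset,\alpha_{l},l) \in Ob(P)$. Here is where left exactness of $sk$ enters: it sends the defining limit diagram to a limit in $B$ with apex $sk(l)$ and legs $sk(\lambda(-))$. For each $i \in Ob(I)$, the hypothesis that every $(\emptyset,\alpha,a) \in Ob(P)$ admits a lift $\tilde{\alpha}$ produces arrows $\tilde{\alpha}(i) : sk(F(i)) \rightarrow sk(a)$ that form a cone from $sk(F(i))$ over the $sk$-image diagram, so the preserved limit yields a unique $\tilde{\alpha}_{l}(i) : sk(F(i)) \rightarrow sk(l)$ with $sk(\lambda(\emptyset,\alpha,a)) \cdot \tilde{\alpha}_{l}(i) = \tilde{\alpha}(i)$. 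Taking $i = e(j)$ recovers $sk(\alpha_{l}(j))$, establishing the required equality $Hom_{U-\mathfrak{Cat}^{2}(1)}(id_{B},e)(\tilde{\alpha}_{l}) = Hom_{U-\mathfrak{Cat}^{2}(1)}(id_{J},sk)(\alpha_{l})$.

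Finally, naturality of $i \mapsto \tilde{\alpha}_{l}(i)$ in $I$ follows once more from the uniqueness clause of the preserved limit: for $\phi \in Arr(I)$, the two candidate composites $\tilde{\alpha}_{l}(codom(\phi)) \cdot sk(F(\phi))$ and $sk(\Delta_{(I,B)}(sk(l))(\phi)) \cdot \tilde{\alpha}_{l}(dom(\phi))$ both agree after post-composition with every limit leg $sk(\lambda(-))$, hence coincide. This shows that $(\emptyset,\alpha_{l},l)$ is genuinely an object of $P$, completing the dualized argument.
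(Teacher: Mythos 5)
Your proposal is correct and follows exactly the route the paper intends: the lemma's entire content is the dualization of the right-exactness inclusion argument, and you have carried that out faithfully (cone from $F(j)$ over the codomain functor, factorization through the limit, left exactness to transport the limit to $B$ and induce the lift $\tilde{\alpha}_{l}$, uniqueness for naturality). The paper itself offers only a one-sentence gesture for the original lemma and no written proof for the dual, so your version supplies the details the paper omits, including the point that $Ob(I)=Ob(J)$ is what makes the lifted components $\tilde{\alpha}(i)$ form a cone at every object of $I$.
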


\begin{lem}
(Uniqueness via epic) Dual to the above.
\end{lem}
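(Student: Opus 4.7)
The plan is to carry out the formal dual of the monic uniqueness lemma already proved in \ref{SkLim}, exploiting that every construction in \ref{SkColim} is obtained from the corresponding construction in \ref{SkLim} by reversing the arrows of the natural transformations (constant-functor-to-$F\cdot e$ becomes $F\cdot e$-to-constant-functor), and by replacing $dob\downarrow$ with $cob\downarrow$.  Accordingly, I expect no new ideas beyond a careful reversal of arrows, which is presumably why the author labels the proof ``Dual to the above.''

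Concretely, I would first restate the hypothesis in its colimit form: for $sk\colon A\to B$ and $F\colon I\to A$ with $codom(F)=dom(sk)$, fix $(l,\lambda)\in Ob(A)\times Arr(Hom_{U-\mathfrak{Cat}^{2}(0)}(P,A))$ which is an $(sk,e)$-colimit of $F$, and suppose the comparison arrow $\lambda_{\coprod}\colon \coprod_{j\in Ob(J)} F_{(0)}\cdot e_{(0)}(j)\longrightarrow l$ obtained from the cocone components $\lambda(((\emptyset,f,b)\mapsto f(j))_{(\emptyset,f,b)\in Ob(P)})$ is epic in $A$.  One must show: for each object $(\emptyset,f,b)\in Ob(P)$, the universal arrow $\lambda'\colon l\to b$ produced by the colimit property is the unique arrow in $A$ for which $\lambda'\cdot \lambda(((\emptyset,f,b)\mapsto f(j))_{(\emptyset,f,b)\in Ob(P)})=f(j)$ holds for every $j\in Ob(J)$.

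Next I would argue as follows.  Existence of a factorization $\lambda'$ is immediate from the universal property of the $(sk,e)$-colimit, since $(\emptyset,f,b)$ is itself an object of $P$ and so receives a colimit comparison arrow by \ref{SkColim}.3.  For uniqueness, suppose $\lambda'_1,\lambda'_2\colon l\to b$ both satisfy $\lambda'_{k}\cdot \lambda_j=f(j)$ for every $j\in Ob(J)$, where $\lambda_j$ denotes the $j$-th component arrow $F(e(j))\to l$ of the cocone $\lambda$.  Precomposing each $\lambda'_k$ with the coproduct coprojection $\iota_j\colon F_{(0)}\cdot e_{(0)}(j)\to \coprod_{j\in Ob(J)}F_{(0)}\cdot e_{(0)}(j)$ and then with $\lambda_{\coprod}$, and invoking $\lambda_{\coprod}\cdot \iota_j=\lambda_j$ by construction, we obtain $\lambda'_1\cdot \lambda_{\coprod}\cdot \iota_j=f(j)=\lambda'_2\cdot \lambda_{\coprod}\cdot \iota_j$ for every $j$.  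By the universal property of the coproduct this forces $\lambda'_1\cdot \lambda_{\coprod}=\lambda'_2\cdot \lambda_{\coprod}$, and then since $\lambda_{\coprod}$ is epic by hypothesis, $\lambda'_1=\lambda'_2$.

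The only mildly delicate point — and it is really just bookkeeping — is verifying that the comparison $\lambda_{\coprod}$ is the morphism through which $\lambda_j$ factors as $\lambda_j=\lambda_{\coprod}\cdot \iota_j$; this is exactly the defining property of the coproduct comparison and mirrors, in reverse, the product comparison appearing in the monic lemma for $(sk,e)$-limits.  No further input is required: the rest of the argument uses only the universal property of the coproduct together with the defining property of an epimorphism, which is precisely dual to the monic argument of \ref{SkLim}.
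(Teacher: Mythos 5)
Your proposal is correct and is exactly the argument the paper intends: the paper gives no proof beyond the phrase ``Dual to the above'' (the monic lemma itself being marked ``Trivial''), and your spelled-out dualization --- reducing uniqueness of the factorization $\lambda'\colon l\to b$ to the universal property of the coproduct followed by cancellation against the epic comparison arrow $\coprod_{j}F_{(0)}\cdot e_{(0)}(j)\to l$ --- is the formal reversal of the monic argument, with no new ideas required.
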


\begin{ex}
Consider $\phi$,$\psi \in Arr(U-\mathfrak{Cat})$, with the same domain. The ($Skel$)-colimit of the diagram is the category $L$ such that its set of objects is the disjoint union of the objects of the codomain categories and the arrows are the formal compositions of the disjoint union of arrows in $Arr(codom(\phi))$, $Arr(codom(\psi))$, and arrows $e_{a} : \phi_{(0)}(a) \rightarrow \psi_{(0)}(a)$, $e_{a}^{-1} : \psi_{(0)}(a) \rightarrow \phi_{(0)}(a)$ formally added for each $a \in Ob(dom(\phi)) = Ob(dom(\psi))$, with the relation generated by requiring that $\forall f \in Arr(dom(\phi)), \phi_{(1)}(f) \cdot e_{dom(f)} = e_{codom(f)} \cdot \psi_{(1)}(f)$. If $l_{\phi} : codom(\phi) \rightarrow L$ and $l_{\psi} : codom(\psi) \rightarrow L$ are given by the $U-\mathfrak{Set}$ coproduct maps then for any $l'_{\phi}, l'_{\psi} \in Arr(U-\mathfrak{Cat})$ such that $l'_{\phi}\cdot \phi \cong \l'_{\psi} \cdot \psi$, there is an arrow $q : L \rightarrow codom(l'_{\phi}) = codom(l'_{\psi})$ such that $l'_{\phi} = q\cdot l_{\phi}$ and $l'_{\psi} = q \cdot l_{\psi}$. If an isomorphism $\alpha : l'_{\phi} \cdot \phi \rightarrow l'_{\psi} \cdot \psi$ is specified (or vice versa), then there is a unique $q : L \rightarrow codom(l'_{\phi})$ such that  $Hom_{(U-\mathfrak{Cat}^{2})(1)}((id_{dom(\phi)},q))_{(1)}( (a \mapsto e_{a})_{a \in Ob(dom(\phi))} ) = \alpha$ (and vice versa).
\end{ex}

\begin{lem}
(Reduction) Dual to the above.
\end{lem}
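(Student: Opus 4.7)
The plan is to mimic the proof of the earlier Reduction Lemma (for the $(sk,e)$-limit) by formally dualizing every arrow, swapping the roles of $\Delta_{(J,A)}(p) \to F\cdot e$ with $F \cdot e \to \Delta_{(J,A)}(p)$, of ``terminal'' with ``initial,'' and of limits of the domain functor with colimits of the codomain functor. Concretely, the statement I expect to formulate is: if the tuple
$$
(l,\, (j \mapsto \lambda ( ( (\emptyset,f,b) \mapsto f(j) )_{(\emptyset,f,b) \in Ob(P)} ) )_{j \in Ob(J)},\, \emptyset )
$$
already lies in $Ob(P)$ (that is, the candidate $(sk,e)$-colimit cone is itself an object of $P$, with a lift $\tilde{\alpha}$ to a natural transformation $sk\cdot F \to \Delta_{(I,B)}(sk(l))$), and if the colimit factorization arrows out of each $F(j)$ through $l$ are unique, then $(l,\lambda)$ is the ordinary colimit of $F \cdot e$.

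First I would unpack the definition in (\ref{SkColim}): an object of $P$ is a cocone $F \cdot e \to \Delta_{(J,A)}(a)$ admitting a lift through $sk$ along $e$; the $(sk,e)$-colimit is realized as the \emph{limit} of the codomain functor $cob\downarrow \cdot \varepsilon_{c}$. Thus, by hypothesis, $(l,\lambda)$ is the universal such cocone among those that lift after applying $sk$. Second, I would observe that the additional hypothesis, namely membership of $(l, \ldots, \emptyset)$ in $Ob(P)$, says that $l$ itself carries a $J$-indexed cocone into it from $F \cdot e$ arising from the universal family; dually to the limit case, this promotes $l$ from being only a ``weak'' colimit (universal only against cocones with lifts) to being an actual cocone. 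Third, the uniqueness of the factorization arrows out of the components $F(j)$ through $l$, together with this cocone structure, gives the universal property against arbitrary cocones $F \cdot e \to \Delta_{(J,A)}(c)$: any such cocone factors through $l$, and uniquely so.

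There is essentially no obstacle beyond correct bookkeeping of the dualization: every clause in the limit version transposes to a colimit clause under the substitutions $\Delta \leftrightarrow ob$ inside $\downarrow_{(-)}(\,\cdot\,,\,\cdot\,)$, $dob \leftrightarrow cob$, $p \leftrightarrow$ its dual domain functor, and ``monic'' $\leftrightarrow$ ``epic'' (already recorded in (\ref{SkColim})). Since the limit version was dispatched trivially once the hypotheses were in force, so too this reduction consists only of recognizing the universal property of a colimit in the reformulated data. Accordingly, I would write the proof as simply ``Dual to the above,'' pointing the reader to the Reduction Lemma preceding (\ref{SCat}) and noting that the required substitutions are precisely those already performed in passing from (\ref{SkLim}) to (\ref{SkColim}).
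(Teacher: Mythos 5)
Your proposal is correct and matches the paper's intent exactly: the paper's entire proof here is the phrase ``Dual to the above,'' with the primal Reduction Lemma itself dispatched as ``Trivial,'' so your explicit transposition of $\Delta \leftrightarrow ob$, domain/codomain functors, and monic/epic is precisely the argument being gestured at. The only caveat, which applies equally to the paper's own formulation, is that membership in $Ob(P)$ a priori yields universality only against cocones admitting $sk$-lifts rather than arbitrary cocones, but since you inherit this silently from the primal case your treatment is faithful to the source.
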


\begin{lem}
(Functoriality) An arrow of functors $F \rightarrow G$ induces a map from the ($sk$)-colimit of $F$ to that of $G$, using the limit map.
\end{lem}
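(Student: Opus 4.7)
My plan is to dualize the construction given in (\ref{LimFun}), reversing the direction of each object of $P$ and invoking the universal property of limits --- into which, according to (\ref{SkColim}), the $(sk,e)$-colimit is defined --- in place of that of colimits. Throughout, I write $P_F$ and $P_G$ for the subcategories associated to $F$ and $G$ by Definition (\ref{SkColim}), and $p_F : P_F \rightarrow A$, $p_G : P_G \rightarrow A$ for the corresponding projections sending $(\emptyset, \beta, b) \mapsto b$, so that the $(sk,e)$-colimits of $F$ and $G$ are, respectively, the limits $(l_F, \lambda_F)$ and $(l_G, \lambda_G)$ of $p_F$ and $p_G$.

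By an ``arrow of functors $F \rightarrow G$" I mean, as in the limit case, a natural transformation $\alpha : F \rightarrow G$ equipped with a lift $\tilde{\alpha} : sk \cdot F \rightarrow sk \cdot G$, i.e., an arrow in the fibred product of the functors $Hom_{U-\mathfrak{Cat}^{2}(1)}(e, id_B)$ and $Hom_{U-\mathfrak{Cat}^{2}(1)}(id_I, sk)$. The first step is to construct a functor $\iota : P_G \rightarrow P_F$ over $A$ by the rule
$$
(\emptyset, \beta, b) \;\longmapsto\; (\emptyset, \; \beta \cdot Hom_{U-\mathfrak{Cat}^{2}(1)}(e, id_A)(\alpha), \; b),
$$
for $\beta : G \cdot e \rightarrow \Delta_{(J,A)}(b)$. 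To verify that the image lies in $P_F$, I take as the required lift the vertical composition of the existing lift $\tilde{\beta} : sk \cdot G \rightarrow \Delta_{(I,B)}(sk(b))$ with $Hom_{U-\mathfrak{Cat}^{2}(1)}(id_I, sk)(\tilde{\alpha})$; functoriality of $\iota$ and the identity $p_F \cdot \iota = p_G$ then follow immediately from the naturality squares for $\alpha$.

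The final step is to apply the universal property of $(l_G, \lambda_G)$: the cone $\lambda_F : \Delta_{(P_F, A)}(l_F) \rightarrow p_F$ restricts along $\iota$ to a cone $\Delta_{(P_G, A)}(l_F) \rightarrow p_F \cdot \iota = p_G$, and hence there is a unique arrow $l_F \rightarrow l_G$ compatible with the two cones, which is the desired induced map from the $(sk,e)$-colimit of $F$ to that of $G$. I expect the only nontrivial point to be the preservation of the lift datum under precomposition with the $e$-restriction of $\alpha$, which is resolved by the vertical composition of lifts described above; the remainder is a mechanical dualization of (\ref{LimFun}).
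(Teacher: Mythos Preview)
Your proposal is correct and is precisely the dualization the paper intends: the paper offers no explicit proof here beyond the implicit reference to (\ref{LimFun}), and your construction---precomposing cocones under $G$ with the $e$-restriction of $\alpha$ to obtain a functor $\iota:P_G\to P_F$ over $A$, then restricting the limit cone $\lambda_F$ along $\iota$ and invoking the universal property of $l_G=\lim p_G$---is exactly that dual argument. One small notational slip: since $\tilde{\alpha}$ already lives in $Hom(I,B)$, the expression $Hom_{U-\mathfrak{Cat}^{2}(1)}(id_I,sk)(\tilde{\alpha})$ in your lift verification should simply read $\tilde{\alpha}$, and the second leg of the fibred product should be $Hom_{U-\mathfrak{Cat}^{2}(1)}(id_J,sk)$ rather than $id_I$, to match (\ref{LimFun}).
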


\begin{rem}
Products and coproducts are not affected by $sk$.
\end{rem}

\sus{Definitions regarding Enrichments}
We will define weak enrichment of sets and categories. 
Sets will be enriched over tensor categories $(A,\otimes)$
and categories over triples $(A,\otimes,F)$
where tensor category $(A,\otimes)$ comes with a tensor 
functor $F: (A,\ten) \longrightarrow (\mathfrak{Set},\times)$.

A weak  enrichment of a set $s$ over $(A,\ten)$
adds to $s$ a category-like structure, a version of $\Hom$ which has values in $A$
(rather than in sets) but without any associativity or unital requirements. We later introduce, for each functor $sk : A \longrightarrow B$, a category of weakly enriched sets, ``associative up to $sk$," in that the associativity diagrams are commutative after the functor $sk$ is applied to them. A weak enrichment of a category $C$ over $(A,\ten)$ with respect to a tensor functor $(F,\rho) : (A,\otimes) \rightarrow (\mathfrak{Set},\times_{\mathfrak{Set}})$ is
a weak enrichment of the set $Ob(C)$ over $(A,\otimes)$ which is compatible with the
$Hom_C$, this compatibility being formulated in terms of the tensor functor $(F,\rho)$.

\sss{$\bold{Definition}$ of a Weakly Enriched Set}
A 
weak enrichment of a set $s\in Ob(U-\mathfrak{Set})$ over a tensor category $(A, \otimes) \in Ob(U-\mathfrak{TCat})$ (a pair consisting of a category $A \in Ob(U-\mathfrak{Cat}$ and a functor $\otimes \in A \times_{U-\mathfrak{Cat}} A \longrightarrow A$) 
is a 
pair
consisting of a map $h : s^{2} \rightarrow Ob(A)$
and a ``composition map'' 
$ \circ : s^{3} \rightarrow Arr(A) )$ such that for any $a,b,c \in s$, 
$$
\circ (a,b,c) :h(a,b) \otimes h(b,c) \longrightarrow h(a,c).
$$

\let\tensor\ten
\newcommand{\USet}{{ U-\mathfrak{Set} }}
\newcommand{\Cat}{{ \mathfrak{Cat} }}
\newcommand{\UCat}{{ U-\mathfrak{Cat} }}
\newcommand{\UTCat}{{ U-\mathfrak{TCat} }}

\sss{$\bold{Definition}$ of the Category of Weak Enrichments}\label{CatWE}
 For any $(A, \otimes ) \in Ob(U-\mathfrak{TCat})$, 
the category of $(A,\otimes)$-enriched sets $WE( A , \otimes ) \in Ob(U-\mathfrak{Cat})$ has as objects weak enrichements of sets  $S=(s,h_S,\circ_S)$,
and for two weak enrichments $S$ and $T$ an arrow 
$f: S = (s, h_{S}, \circ_{S}) \rightarrow T = (t, h_{T}, \circ_{T} )$ is a pair of functions 
$f=( f_{1} : s \rightarrow t , f_{2} : s^{2} \rightarrow Arr(A) )$
such that the following hold.

\ref{CatWE}.1. $\forall a,b \in s,\ \  f_{2}(a,b) : h_{S}(a,b) \longrightarrow h_{T}(f_{1}(a),f_{1}(b))$, and 

\ref{CatWE}.2. $\forall a,b,c \in s$,  
$$
\circ_{T} ( f_{1}(a), f_{1}(b), f_{1}(c) ) \cdot ( f_{2}(a,b) \otimes f_{2}(b,c) ) = f_{2}(a,c) \cdot \circ_{S} ( a,b,c ),
$$ 

i.e. the compositions commute with the arrows defining a ``functor from $S$ to $T$". 

\begin{lem}
\label{TCatFun}

The above construction, of $WE(A,\otimes)$, extends to a functor $WE : U-\mathfrak{TCat} \longrightarrow U'-\mathfrak{Cat}$, from the category of tensor categories to the category of categories.

For any functor of tensor categories $(F,\rho) : (A,\otimes_{A}) \rightarrow (B,\otimes_{B})$ define a functor $WE(F,\rho) : WE(A,\otimes_{A}) \rightarrow WE(B,\otimes_{B})$ from the category of weak enrichments over $(A,\otimes_{A})$ to that of $(B,\otimes_{B})$ as follows.

\ref{TCatFun}.1. 
It sends an object $S=(s,h,\ci)$
of 
$WE(A,\otimes_{A})$ 
to the triple 
$F(S)=(s,h',\circ')$ 
where for 
$a,b,c\in s$,
$$
h'(a,b)= F(h(a,b)) \text{ and } \circ'(a,b,c) = F(\circ(a,b,c)) \cdot \rho(h(b,c),h(a,b)).
$$
 

\ref{TCatFun}.2.
It sends an arrow 
$\phi: S = (s,h_{s},\circ_{s}) \rightarrow (t,h_{t},\circ_{t}) = T$ in $WE(A,\otimes_{A})$ 
(here $s^2\ni (a,b)\mm \phi(a,b)\in Arr(A)$)
to the arrow $F(\phi):F(S)\to F(T)$ that sends 
$
(a,b)\in s^2$ to  $F(\phi(a,b))) \in Arr(B)$.



\end{lem}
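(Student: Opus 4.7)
The plan is to verify in sequence: (i) that for each tensor functor $(F,\rho) : (A,\otimes_A) \to (B,\otimes_B)$, the assignment $WE(F,\rho)$ is well-defined on objects of $WE(A,\otimes_A)$; (ii) that it is well-defined on arrows; and (iii) that the assignment $(F,\rho) \mapsto WE(F,\rho)$ respects identities and composition of tensor functors. Steps (i) and (ii) are type-checks followed by a single diagram chase invoking the naturality of $\rho$, and step (iii) is bookkeeping once the action on arrows is fixed.

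For (i), I first confirm that $F(S) = (s, h', \circ')$ satisfies the conditions of a weakly enriched set over $(B, \otimes_B)$, i.e.\ that $\circ'(a,b,c)$ has domain $h'(a,b) \otimes_B h'(b,c) = F(h(a,b)) \otimes_B F(h(b,c))$ and codomain $h'(a,c) = F(h(a,c))$. The coherence map $\rho(h(b,c), h(a,b))$ lands in $F(h(a,b) \otimes_A h(b,c))$, upon which $F(\circ(a,b,c))$ lands in $F(h(a,c))$, as required. For (ii), the typing condition \ref{CatWE}.1 for $F(\phi)$ is immediate from the functoriality of $F$ applied to $\phi(a,b) : h_S(a,b) \to h_T(\phi_1(a),\phi_1(b))$.

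The main content is the verification of condition \ref{CatWE}.2 for $F(\phi)$, namely
$$
\circ'_T(\phi_1(a), \phi_1(b), \phi_1(c)) \cdot \bigl(F(\phi(a,b)) \otimes_B F(\phi(b,c))\bigr) = F(\phi(a,c)) \cdot \circ'_S(a,b,c).
$$
Expanding both sides using the definition of $\circ'$, the left becomes $F(\circ_T(\phi_1(a), \phi_1(b), \phi_1(c))) \cdot \rho(h_T(\phi_1(b),\phi_1(c)), h_T(\phi_1(a),\phi_1(b))) \cdot (F(\phi(a,b)) \otimes_B F(\phi(b,c)))$. The naturality of $\rho$ as a tensor-functor coherence transformation rewrites the last two factors as $F(\phi(a,b) \otimes_A \phi(b,c)) \cdot \rho(h_S(b,c), h_S(a,b))$. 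Applying $F$ to condition \ref{CatWE}.2 for $\phi$ in $WE(A, \otimes_A)$ and using the functoriality of $F$ with respect to composition in $A$ identifies the remaining prefix with $F(\phi(a,c)) \cdot F(\circ_S(a,b,c))$, giving the right-hand side after reassembly with $\rho(h_S(b,c), h_S(a,b))$. The main obstacle is ensuring that the order of arguments to $\rho$ threads correctly through this chase; once the naturality square of $\rho$ is invoked in its standard orientation, the remaining identity is a purely functorial consequence of condition \ref{CatWE}.2 for $\phi$.

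For (iii), the identity tensor functor $(\mathrm{id}_A, \mathrm{id})$ evidently sends $(s, h, \circ)$ to itself and $\phi$ to $\phi$, so $WE(\mathrm{id}_A, \mathrm{id}) = \mathrm{id}_{WE(A, \otimes_A)}$. Given composable tensor functors $(F,\rho) : (A, \otimes_A) \to (B, \otimes_B)$ and $(G, \sigma) : (B, \otimes_B) \to (C, \otimes_C)$ with composite coherence $\sigma \cdot (G \ast \rho)$ in the standard sense, unwinding $WE\bigl((G, \sigma) \cdot (F, \rho)\bigr)$ on objects gives $h''(a,b) = G(F(h(a,b)))$ and, on composition, $G(F(\circ(a,b,c))) \cdot G(\rho(h(b,c), h(a,b))) \cdot \sigma(F(h(b,c)), F(h(a,b)))$, which is exactly $WE(G, \sigma) \cdot WE(F, \rho)$ applied to $(s, h, \circ)$; the agreement on arrows is immediate from the definition. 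This completes the verification that $WE$ is a functor $U\text{-}\mathfrak{TCat} \to U'\text{-}\mathfrak{Cat}$.
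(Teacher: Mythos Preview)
Your proof is correct and supplies exactly the routine verification that the paper leaves implicit; the paper states the lemma with its construction and gives no proof, so there is nothing to compare against beyond noting that yours is the only natural approach. One small slip: in part~(iii) you describe the composite coherence as ``$\sigma \cdot (G \ast \rho)$'', but in the paper's convention $f \cdot g$ means $f(g(-))$, so the composite coherence is $(G\ast\rho)\cdot\sigma$ (first $\sigma$, then $G(\rho)$); your subsequent computation $G(F(\circ))\cdot G(\rho)\cdot\sigma$ has the order right, so this is purely a labeling typo and does not affect the argument.
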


\sss{$\bold{Definition}$ of an Weakly Enriched Category}\label{WECat}
A weak enrichment of a category $C$ with respect to a tensor functor $(A, \otimes) \xrightarrow{(F,\rho)} (U-\mathfrak{Set},\times_{U-\mathfrak{Set}})$ is a quadruple $(C,h,\circ,\phi)$ such that $C \in Ob(U-\mathfrak{Cat})$ is a category, $h$ and $\circ$ define a weak enrichment of the set $Ob(C)$, and $\phi : Ob(C)^{2} \rightarrow Arr(U-\mathfrak{Set})$ is a function, such that

\ref{WECat}.1. For any $a,b \in Ob(C)$, $\phi(a,b) : F(h(a,b)) \rightarrow Hom_{C}(a,b)$ is an isomorphism;

\ref{WECat}.2. For any $a,b,c \in Ob(C)$, the composition 
$$
\circ_{C}(a,b,c) : Hom_{C}(b,c) \times_{U-\mathfrak{Set}} Hom_{C}(a,b) \rightarrow Hom_{C}(a,c)
$$
 
of hom sets in $C$ is given by the weak enrichment, i.e.
$$
\circ_{C}(a,b,c) = \phi(a,c)^{-1} \cdot F(\circ(a,b,c)) \cdot \rho(h(b,c),h(a,b)) \cdot (\phi(b,c) \times_{U-\mathfrak{Set}} \phi(a,b))
$$

\sss{$\bold{Definition}$ of the Category of Weakly Enriched Categories}\label{CatWECat}

The category \newline
$WE_{\mathfrak{Cat}}(F,\rho)$ of categories weakly enriched over a tensor category 
$(A,\otimes)$ with respect to a tensor functor
$(F,\rho): (A,\otimes) \to (\USet , \times_{U-\mathfrak{Set}})$, has objects which are
categories $(C,h,\circ,\phi)$ weakly enriched over $(A, \otimes)$.
An arrow 
$f : (C,h_{C},\circ_{C},\phi) \rightarrow (D,h_{D},\circ_{D},\psi)$
consists  
of a functor
$(f_{0},f_{1}) : C \longrightarrow D$ 
and a function $f_{2} : Ob(C)^{2} \longrightarrow Arr(A)$, 
such that 

\ref{CatWECat}.1. $(f_{0},f_{2}) : (Ob(C),h_{C},\circ_{C}) \rightarrow (Ob(D),h_{D},\circ_{D})$ is an arrow of weak enrichments of sets; 

\ref{CatWECat}.2. For any $a,b \in Ob(C)$,
$$
F_{1}(f_{2}(a,b)) = \psi(f_{0}(a),f_{0}(b)) \cdot F(f_{2}(a,b)) \cdot \phi(a,b)^{-1}
$$ 

i.e. the functor agrees with that implied by the enrichment.

\sss{}\label{TCatFunC}
One can construct a functor from the category of tensor categories over the tensor category of sets $U-\mathfrak{TCat}_{/(U-\mathfrak{Set},\times_{U-\mathfrak{Set}})}$ to the category of categories, i.e. 
$$
WE_{\mathfrak{Cat}}( ) := (WE_{\mathfrak{Cat}0}( ), WE_{\mathfrak{Cat}1}( )) :U-\mathfrak{TCat}_{/(U-\mathfrak{Set},\times_{U-\mathfrak{Set}})} \longrightarrow U'-\mathfrak{Cat} 
$$

in analogue to the construction of Lemma \ref{TCatFun}, as follows. For any arrow $(\Phi,\rho) : (F,\rho_{F}) \longrightarrow (G,\rho_{G})$ of tensor categories $(F,\rho_{F}) : (A,\otimes_{A}) \longrightarrow (U-\mathfrak{Set},\times_{U-\mathfrak{Set}})$ and $(G,\rho_{G}) : (B,\otimes_{B}) \longrightarrow (U-\mathfrak{Set},\times_{U-\mathfrak{Set}})$ over $(\mathfrak{Set},\times_{U-\mathfrak{Set}} )$ define a functor \newline $WE_{\mathfrak{Cat}0}(F,\rho_{F}) \longrightarrow WE_{\mathfrak{Cat}0}(G,\rho_{G})$.

\ref{TCatFunC}.1. It is defined on an object $(C,h,\circ,\phi) \in Ob(WE_{\mathfrak{Cat}0}(F,\rho))$ by
$$
(C, h, \circ ,\phi ) \longmapsto (C, \Phi_{(0)} \circ h, ((a,b,c) \mapsto
\Phi_{(1)}(\circ (a,b,c)) \circ \rho (h(b,c),h(a,b)) )_{a,b,c \in Ob(\mathcal{C})},\phi ).
$$

\ref{TCatFunC}.2. It is deifined on arrows $(F,F_{2}) : (C,h_{C},\circ_{C},\phi) \rightarrow (D,h_{D},\circ_{D},\psi)$ by
$$
(F,F_{2}) \mapsto WE_{\mathfrak{Cat}1}(\Phi,\rho)(F,F_{2}) := (F,\Phi_{(1)} \circ F_{2}).
$$

\sss{$\bold{Definition}$ of Two Forgetful Functors}\label{DefForWE}

Define the following two functors.

\ref{DefForWE}.1. For any tensor functor $(F,\rho) : (A,\otimes) \longrightarrow (U-\mathfrak{Set},\times_{U-\mathfrak{Set}})$, the forgetful functor $For^{WE(F,\rho)}_{WE( dom(F, \rho) )} : WE_{\mathfrak{Cat}}(A, \otimes, F) \longrightarrow WE_{\mathfrak{Cat}}(A, \otimes )$ from the category of weakly enriched categories with respect to $(F,\rho)$ to weakly enriched sets with respect to $(A,\otimes)$ is the functor given by passing from a category $C$ to its set of objects $Ob(C)$. More precisely, it is defined on an object $(C,h,\circ,\phi) \in Ob(WE_{\mathfrak{Cat}}(F,\rho) )$ by 
$$
(C,h,\circ,\phi) \mapsto (Ob(C),h,\circ )
$$

and on an arrow $(f,f_{2}) \in Arr(WE_{\mathfrak{Cat}}(F,\rho))$ by
$$
(f,f_{2}) \mapsto (f_{(0)},f_{2})
$$

\ref{DefForWE}.2. The forgetful functor from the category of weakly enriched categories to the category of categories $For^{WE( F,\rho)}_{\mathfrak{Cat}} : WE_{\mathfrak{Cat}}(F,\rho) \longrightarrow U-\mathfrak{Cat}$ is the functor which forgets the enrichment structure, returning the underlying category. I.e. it sends a weakly enriched category $(C,h,\circ,\phi)$ to $C$.

\sss{$\bold{Definition}$ of the Category $WE_{(sk)}(A,\otimes)$} \label{DefWESk}
For any $sk: A \longrightarrow B \in Arr(\mathfrak{Cat})$, define the category $WE_{(sk)}(A,\otimes) \in Ob(\mathfrak{Cat})$ of $((A,\otimes),sk)$-enriched sets. 

\ref{DefWESk}.1. Its objects are sets enriched over $A$. 

\ref{DefWESk}.2. The hom sets
$$
Hom_{WE_{(sk)}(A,\otimes)}((S,h_{S},\circ),(T,h_{T},\circ_{T})) =
$$
are the pairs of maps of sets $(F_{0},F_{1}) \in Arr(\mathfrak{Set})^{2}$ such that $F_{0} : S \rightarrow T$ and $F_{1} : S^{2} \rightarrow Arr(A)$ and 

\ref{DefWESk}.2.1.
For any $a,b \in S, F_{1}(a,b) \in Hom_{A}(h_{S}(a,b),h_{T}(F_{0}(a),F_{0}(b))$.

\ref{DefWESk}.2.2. $F_{1}$ respects composition after applying $sk$, i.e. for any $a,b,c \in S$,
$$
sk(F_{1}(a,c) \cdot \circ_{S}(a,b,c)) = sk(\circ_{T}(F_{0}(a),F_{0}(b),F_{0}(c)) \cdot (F_{1}(a,b) \otimes F_{1}(b,c)))
$$

holds.

\ref{DefWESk}.3. For any $(sk)$-associator $\alpha$, we define the subcategory $WE_{Assoc(sk,\alpha)}(A,\otimes) \subseteq WE_{(sk)}(A,\otimes)$, informally $WE_{Assoc(sk)}(A,\otimes)$, to be the full subcategory whose objects are $(sk)$-associative enriched sets.

\begin{rem}
Roughly speaking, $F_{0}$ is the map between objects of enriched sets, and $F_{1} : h_{S} \rightarrow h_{T}\circ F$ is the ``natural transformation of hom functors," (there are no non-trivial arrows in $S$). This means that applying the ``functor," $(F_{0},F_{1})$, then composing in $T$, versus composing in $S$ and  then applying the functor, gives two arrows in $A$, such that $sk$ of one arrow is equal to $sk$ of the other.
\end{rem}

\begin{lem}
$WE_{(sk)}(A,\otimes)$ is a category.
\end{lem}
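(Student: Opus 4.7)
The plan is to specify identity arrows and a composition law on $WE_{(sk)}(A,\otimes)$ and to verify the category axioms; most of the content lies in showing that the proposed composition remains in the hom sets, i.e.\ that the $(sk)$-compatibility condition \ref{DefWESk}.2.2 is preserved under composition.

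First, for each object $(S,h_{S},\circ_{S})$, I take the identity arrow to be the pair $(id_{S}, (a,b) \mapsto id_{h_{S}(a,b)})$. Condition \ref{DefWESk}.2.1 is immediate, and for \ref{DefWESk}.2.2 both sides of the defining equation reduce, using $id_{x} \otimes id_{y} = id_{x \otimes y}$ and the unital laws in $A$, to $sk(\circ_{S}(a,b,c))$; then applying $sk$ gives the required equality.

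Next, given $(F_{0},F_{1}) : S \to T$ and $(G_{0},G_{1}) : T \to U$ in $WE_{(sk)}(A,\otimes)$, I define their composite to be $(G_{0} \cdot F_{0}, H_{1})$, with $H_{1}(a,b) := G_{1}(F_{0}(a),F_{0}(b)) \cdot F_{1}(a,b)$. Condition \ref{DefWESk}.2.1 follows since each factor lies in the appropriate $Hom_{A}$ set and the codomains and domains match. The main step is \ref{DefWESk}.2.2. Using the functoriality of $sk$ to split off an initial factor, I first rewrite
$$
sk\bigl(H_{1}(a,c) \cdot \circ_{S}(a,b,c)\bigr) = sk\bigl(G_{1}(F_{0}(a),F_{0}(c))\bigr) \cdot sk\bigl(F_{1}(a,c) \cdot \circ_{S}(a,b,c)\bigr);
$$
I apply the condition for $F$ to the second factor, reassemble under $sk$ by functoriality, then apply the condition for $G$ at the triple $(F_{0}(a),F_{0}(b),F_{0}(c))$ to the initial pair of factors. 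Finally, the bifunctoriality of $\otimes$ combines $(G_{1}(F_{0}(a),F_{0}(b)) \otimes G_{1}(F_{0}(b),F_{0}(c))) \cdot (F_{1}(a,b) \otimes F_{1}(b,c))$ into $H_{1}(a,b) \otimes H_{1}(b,c)$, giving the required identity. This chaining---two equalities modulo $sk$, each propagated through neighbouring arrows by functoriality of $sk$---is the only subtle point.

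Associativity and unit laws follow componentwise: the first coordinate inherits associativity and unitality from $\mathfrak{Set}$, and the second coordinate inherits them from $A$, since $H_{1}$ is defined pointwise in $(a,b) \in S^{2}$ by composition of the constituent arrows in $A$. In particular, $id_{h_{T}(F_{0}(a),F_{0}(b))} \cdot F_{1}(a,b) = F_{1}(a,b) = F_{1}(a,b) \cdot id_{h_{S}(a,b)}$ yields the unit axioms.
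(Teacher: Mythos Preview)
Your proof is correct and follows essentially the same approach as the paper: both arguments verify that the composite arrow satisfies condition \ref{DefWESk}.2.2 by using functoriality of $sk$ to split the expression, applying the hypotheses for $F$ and $G$ in turn, and invoking bifunctoriality of $\otimes$ to recombine. The only cosmetic difference is that the paper begins from the $\circ_{U}$ side and works toward $\circ_{S}$, whereas you begin from $\circ_{S}$ and work outward; you are also more explicit about identities, associativity, and unit laws, which the paper elides with the remark ``The issue is composition.''
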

\begin{proof}
The issue is composition. Given composible arrows
$$
((S,h_{S},\circ_{S}) \xrightarrow{(F_{0},F_{1})} (T,h_{T},\circ_{T})), ((T,h_{T},\circ_{T}) \xrightarrow{(G_{0},G_{1})} (U,h_{U},\circ_{U})) \in Arr(WE_{(sk)}(A,\otimes))
$$

Starting from the result of application of the functor $sk$ to the arrow which uses the composition $\circ_{U}$,
$$
sk(
$$
$$
\circ_{U}(G_{0}\cdot F_{0}(a),G_{0}\cdot F_{0}(b), G_{0}
\ \cdot\ 
F_{0}(c))
$$
$$
\cdot ((G_{1}(F_{0}(a),F_{0}(b))\cdot F_{1}(a,b)) \otimes (G_{1}(F_{0}(b),F_{0}(c))\cdot F_{1}(b,c)))
$$
$$
) =
$$
by functoriality of $\otimes$ 
$$
sk(
$$
$$
\circ_{U}(G_{0}\cdot F_{0}(a),G_{0}\cdot F_{0}(b)
, G_{0}\cdot F_{0}(c)) \cdot
$$
$$
(G_{1}(F_{0}(a),F_{0}(b)) 
\otimes G_{1}(F_{0}(b),F_{0}(c))) \cdot
$$
$$
(F_{1}(a,b) \otimes F_{1}(b,c))
$$
$$
) =
$$
by functoriality of $sk$
$$
sk(\circ_{U}(G_{0}\cdot F_{0}(a),G_{0}\cdot F_{0}(b), G_{0}\cdot F_{0}(c))) \cdot 
$$
$$
sk((G_{1}(F_{0}(a),F_{0}(b)) \otimes G_{1}(F_{0}(b),F_{0}(c)))) \cdot 
$$
$$
sk((F_{1}(a,b) \otimes F_{1}(b,c))) =
$$
by $(G_{0},G_{1}) \in Arr(WE_{(sk)}(A,\otimes))$,
$$
sk(G_{1}(F_{0}(a),F_{0}(c))) \cdot sk(\circ_{T}(F_{0}(a),F_{0}(b),F_{0}(c))) \cdot sk((F_{1}(a,b) \otimes F_{1}(b,c))) = 
$$
by $(F_{0},F_{1}) \in Arr(WE_{(sk)}(A,\otimes))$,
$$
sk(G_{1}(F_{0}(a),F_{0}(c))) \cdot sk(F_{1}(a,c))\cdot sk(\circ_{S}(a,b,c))
$$
\end{proof}


\begin{lem}
If $(A,\otimes)$ has products, then so does $WE_{(sk)}(A,\otimes)$. The product is functorial.
\end{lem}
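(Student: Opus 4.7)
The plan is to build the product pointwise, transporting the product structure from $(A,\otimes)$ along the set-projections, and then to verify the $(sk)$-universal property essentially by the universal property of products in $A$.

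Given a family $\{(S_i,h_i,\circ_i)\}_{i\in I}$ of objects of $WE_{(sk)}(A,\otimes)$, the candidate product is $(P,h_P,\circ_P)$ defined by $P := \prod_i S_i$ (product in $U\text{-}\mathfrak{Set}$), $h_P(a,b) := \prod_i h_i(a_i,b_i)$ (product in $A$, which exists by hypothesis), and $\circ_P(a,b,c)$ defined as the unique arrow making the diagrams
$$
\pi_i^A \cdot \circ_P(a,b,c) \;=\; \circ_i(a_i,b_i,c_i)\cdot(\pi_i^A \otimes \pi_i^A)
$$
commute, where $\pi_i^A$ denotes the projection arrow in $A$. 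First I would verify that $(P,h_P,\circ_P)$ is indeed an object of $WE_{(sk)}(A,\otimes)$ --- this only requires that $\circ_P(a,b,c)$ be a well-defined arrow $h_P(a,b)\otimes h_P(b,c)\to h_P(a,c)$, which follows immediately from the universal property of $h_P(a,c)$ applied to the cone $\{\circ_i(a_i,b_i,c_i)\cdot(\pi_i^A\otimes\pi_i^A)\}_{i\in I}$. Next I would check the projections $(\pi_i^0,\pi_i^1):(P,h_P,\circ_P)\longrightarrow (S_i,h_i,\circ_i)$, with $\pi_i^0$ the set-projection and $\pi_i^1(a,b) := \pi_i^A$, are arrows in $WE_{(sk)}(A,\otimes)$: the defining equation of $\circ_P$ gives the composition-compatibility equality even before applying $sk$.

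For the universal property, suppose $(T,h_T,\circ_T)\in Ob(WE_{(sk)}(A,\otimes))$ and $(F_i^0,F_i^1):T\longrightarrow S_i$ is a family of arrows. I would construct a factoring arrow $(F^0,F^1):T\longrightarrow P$ by setting $F^0(a) := (F_i^0(a))_{i\in I}$ and $F^1(a,b) := \langle F_i^1(a,b)\rangle_{i\in I}$, the unique arrow in $A$ from $h_T(a,b)$ to $h_P(F^0(a),F^0(b))=\prod_i h_i(F_i^0(a),F_i^0(b))$ whose projection to each factor is $F_i^1(a,b)$. Uniqueness of the factoring in $WE_{(sk)}(A,\otimes)$ follows directly: if $(G^0,G^1)$ is another such arrow then $\pi_i^0\cdot G^0=F_i^0$ forces $G^0=F^0$, and $\pi_i^A\cdot G^1(a,b)=F_i^1(a,b)$ for every $i$ forces $G^1(a,b)=F^1(a,b)$ by the universal property of $h_P(F^0(a),F^0(b))$ in $A$.

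The main obstacle is verifying that the candidate $(F^0,F^1)$ actually lies in $WE_{(sk)}(A,\otimes)$, that is, that
$$
sk\bigl(\circ_P(F^0(a),F^0(b),F^0(c))\cdot(F^1(a,b)\otimes F^1(b,c))\bigr) \;=\; sk\bigl(F^1(a,c)\cdot\circ_T(a,b,c)\bigr).
$$
The key manoeuvre here is to precompose each side with $\pi_i^A$ inside $A$: using the defining equations of $\circ_P$ and $F^1$ together with functoriality of $\otimes$, both sides reduce, upon precomposition with $\pi_i^A$, to the two arrows whose equality after $sk$ is exactly the hypothesis that $(F_i^0,F_i^1)$ is an arrow in $WE_{(sk)}(A,\otimes)$. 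Passing from these componentwise equalities back to an equality of the two arrows into $h_P(F^0(a),F^0(c))$ after $sk$ is the delicate step, and it goes through by functoriality of $sk$ together with the universal property of $h_P$: the two arrows, being products of arrows that are equal after $sk$, coincide after $sk$. Finally, functoriality of the product construction as a functor $WE_{(sk)}(A,\otimes)^I\longrightarrow WE_{(sk)}(A,\otimes)$ is a direct consequence of the universal property just established, applied to families of arrows composed with projections from the source product.
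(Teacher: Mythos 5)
The paper states this lemma without proof, so there is no in-text argument to compare against; your pointwise construction ($P=\prod_i S_i$, $h_P(a,b)=\prod_i h_i(a_i,b_i)$, and $\circ_P(a,b,c)$ induced by the cone $\{\circ_i(a_i,b_i,c_i)\cdot(\pi_i^A\otimes\pi_i^A)\}_i$) is surely the intended one, and your verification that the projections are arrows, your uniqueness argument, and your componentwise reduction of the compatibility condition are all correct.

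The gap is exactly at the step you flag as delicate, and it does not go through as written. You have two arrows $f=F^1(a,c)\cdot\circ_T(a,b,c)$ and $g=\circ_P(F^0(a),F^0(b),F^0(c))\cdot(F^1(a,b)\otimes F^1(b,c))$ into the product $\prod_i h_i(F_i^0(a),F_i^0(c))$, and you know $sk(\pi_i^A\cdot f)=sk(\pi_i^A\cdot g)$ for every $i$. This does not yield $sk(f)=sk(g)$: the arrows $sk(\pi_i^A)$ need not be jointly monic in $B$, because a general functor $sk$ does not preserve products. ``Functoriality of $sk$ together with the universal property of $h_P$'' only says that $f$ and $g$ are each determined by their components \emph{in $A$}; when the components differ in $A$ and agree only after $sk$, nothing forces $sk(f)=sk(g)$. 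The failure matters because the mediating pair is forced: composition in $WE_{(sk)}(A,\otimes)$ is on-the-nose in the $F_1$-datum, so $\pi_i^A\cdot F^1(a,b)=F_i^1(a,b)$ pins $F^1$ down uniquely, and if that candidate violates condition (\ref{DefWESk}.2.2) then there is no mediating arrow at all. Notably, the paper itself treats the needed implication as an explicit extra hypothesis rather than a consequence: the proof of (\ref{PushPull}) assumes ``$\forall f,g : x \rightarrow \prod_{i}y_{i}$, $\forall i$, $sk(\pi_{i}\cdot f)=sk(\pi_i\cdot g) \Longrightarrow sk(f)=sk(g)$.'' So either the lemma should carry that hypothesis (or one implying it, e.g.\ that $sk$ preserves products), or the existence half of the universal property needs a different argument; as written it is unproven for general $sk$.
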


\sss{$\bold{Definition}$ of $(sk)$-Associativity}
Consider a tensor category $(A, \otimes)\in Ob(U-\mathfrak{ATCat})$ with a functor $sk:A\to B$ and an $(sk)$-associator $\alpha : Ob(A)^{3} \rightarrow Arr(A)$. An $(A,\ten)$-enriched set $(S, h, \circ) \in Ob(WE (A, \otimes) )$ is said to be {\em ($sk,\alpha$)-associative} if for any $a,b,c,d \in S$, 
$$
sk_{(1)} (\circ(a,b,d) \cdot ( id_{h(a,b)} \otimes \circ(b,c,d) ) \cdot \alpha(h(a,b), h(b,c), h(c,d)) ) =
$$
$$
sk_{(1)} (\circ(a,c,d) \cdot (\circ (a,b,c) \otimes id_{h(c,d)} ) )
,$$ i.e. the standard self-consistency  diagram (pentagram) for the enriched composition $\circ$ is required to commute  after applying the functor $sk$
$$
\begin{CD}
h(c,d) \otimes ( h(b,c) \otimes h(a,b))
@>\alpha(h(c,d),h(b,c),h(a,b)) >>
(h(c,d) \otimes h(b,c)) \otimes h(a,b)
\\
@V{id_{h(c,d)} \otimes \circ(a,b,c)}VV
@V{\circ(b,c,d) \otimes id_{h(a,b)}}VV
\\
h(c,d) \otimes h(a,c)
@.
h(b,d) \otimes h(a,b)
\\
@V{\circ(a,c,d)}VV
@V{\circ(a,b,d)}VV
\\
h(a,d)
@>=>>
h(a,d).
\end{CD}
$$

If the associator $\alpha$ is understood, then we will write ``$(sk)$-associative".

\sss{$\bold{Definition}$ of $WE_{Ass(A,\otimes)(sk,\alpha)}$} Suppose that $(A,\otimes)$ has an associator $\alpha$ (see \ref{ATCat}). Define $WE_{Ass(A,\otimes)(sk,\alpha)}$ to be the full subcategory of $WE_{(sk)}(A,\otimes)$ generated by enriched sets $(S,h_{S},\circ_{S}) \in Ob(WE_{(sk)}(A,\otimes))$ which are $(sk,\alpha)$-associative. If the associator is understood, then we will denote this by ``$WE_{Ass(A,\otimes)(sk)}$".

\sus{Enrichment of $Hom_{WE_{(sk)}(A,\otimes)}(I,C)$}

Consider a tuple of functors $\{ p_{i} : I_{i} \longrightarrow A \}_{i=1}^{n}$. Suppose that for each $i \in \{1,...,n \}$, the colimit $colim \ p_{i} \in Ob(A)$ exists, with universal arrows $e_{i(x_{i})} : p_{i}(x_{i}) \rightarrow colim \ p_{i}$. Suppose that the colimit of the functor $\otimes_{i=1}^{n} p_{i} : \prod_{i=1}^{n} I_{i} \longrightarrow A$ defined by $(x_{i})_{i=1}^{n} \mapsto \otimes_{i=1}^{n}p_{i}(x_{i})$ is also an object in $A$. Consider the arrow $(colim \otimes_{i=1}^{n} p_{i} \rightarrow \otimes_{i=1}^{n} colim \ p_{i}) \in Arr(A)$ induced by $(x_{i})_{i=1}^{n} \mapsto \otimes_{i=1}^{n} e_{i(x_{i})}$; i.e. by tensoring the universal arrows together. The following lemma states that under certain conditions on the $p_{i}$, the above defines a natural transformation with respect to arrows of functors $\phi_{i} : p_{i} \rightarrow q_{i}$.

The $(A,\otimes)$-enrichment of the hom-sets in $WE_{(sk)}(A,\otimes)$ involves such colimits, and the definition of the composition requires that the above arrows should be isomorphisms. This means that the ``forward and backward composition functors" to be introduced in lemma \ref{PushPull} below are determined by the arrows between products $\prod h_{S}(...) \rightarrow \prod h_{T}(...)$.

\sss{Lemma on the Naturality of $\tau$}\label{LemNatTau}
Suppose that $\{ F_{i},G_{i} : I_{i} \longrightarrow A \}_{i=1}^{n}$ are functors, and $\{ (F_{i} \xrightarrow{\phi_{i}} G_{i}) \}_{i=1}^{n}$ are arrows of functors. Suppose that for each $i \in \{ 1,...,n\}$, $P_{F_{i}} \subseteq \downarrow_{(Hom^{(1)}_{U-\mathfrak{Cat}^{2}}(J_{i},A))}(\Delta_{(J_{i},A)},F_{i}\circ \varepsilon_{i} )$ and $P_{G_{i}} \subseteq \downarrow_{(Hom^{(1)}_{U-\mathfrak{Cat}^{2}}(J_{i},A))}(\Delta_{(J_{i},A)},G_{i}\circ \varepsilon_{i} )$ are subcategories, where $\varepsilon_{i} : J_{i} \subseteq I_{i}$ is the subcategory with only identity arrows.

\ref{LemNatTau}.1. Suppose that the functors $p_{F_{i}} : P_{F_{i}} \longrightarrow A$ and $p_{G_{i}} : P_{G_{i}} \longrightarrow A$ are as in the conditions of the limit inclusion lemma (i.e. $colim ( p_{F_{i}} )$ determines an object in $P_{F_{i}}$, with the analogue holding for $G_{i}$) 

\ref{LemNatTau}.2. Define an arrow of sets $\tau_{( \ )} : \prod_{i=1}^{n} Ob(Hom^{(1)}_{U-\mathfrak{Cat}^{2}}(I_{i},A)) \rightarrow Arr(A)$ so that for any $(H_{i})_{i=1}^{n} \in \prod_{i=1}^{n} Ob(Hom^{(1)}_{U-\mathfrak{Cat}^{2}}(I_{i},A))$, $\tau_{((H_{i})_{i=1}^{n}))} : colim (\bigotimes_{i=1}^{n} p_{H_{i}}) \rightarrow \bigotimes_{i=1}^{n} colim(p_{H_{i}})$ is the universal arrow for the colimit induced by the assignment (where $\lambda_{(i)}$ is the natural transformation defining the colimit of $p_{H_{i}}$)
$$
((a_{(i)},f_{(i)})_{i=1}^{n} \mapsto \otimes_{i=1}^{n} \lambda_{(i)}((a_{(i)},f_{(i)})) \ )_{(a_{(i)},f_{(i)})_{i=1}^{n} \in \prod_{i=1}^{n} P_{H_{i}}}
$$ 

\ref{LemNatTau}.3. If $u=u_{(p)} : colim (\bigotimes_{i=1}^{n} p_{F_{i}}) \rightarrow colim (\bigotimes_{i=1}^{n} p_{G_{i}})$ is the universal arrow for the colimit induced by the assignment $$
((a_{( \ )} , f_{( \ )}) \mapsto \lambda'_{G}((\otimes_{i=1}^{n}a_{(i)},\otimes_{i=1}^{n}\phi_{i}\cdot f_{(i)} )) \ )_{(a_{( \ )} , f_{( \ )}) \in Ob(\prod_{i=1}^{n}P_{F_{i}})}
$$

then
$$
\otimes_{i=1}^{n} \lambda_{G_{j}}((a,\phi_{j}\cdot f )) \cdot \tau_{F_{( \ )}} = \tau_{G_{( \ )}} \cdot u
$$

I.e., $\tau : colim (\bigotimes_{i=1}^{n} p_{i}) \rightarrow \bigotimes_{i=1}^{n} colim(p_{i})$ is ``natural at $(\phi_{i})_{i=1}^{n}$."

\begin{proof}
By the monic arrow condition, the arrows involved are situated above the products, $\prod_{a \in Ob(I_{i})} F_{i}(a)$, so that the arrows $\otimes_{i=1}^{n}a_{(i)} \rightarrow \otimes_{i=1}^{n} l_{G_{i}}$ are pure tensors respecting the arrows $\phi_{i}$.
\end{proof}

The following lemma defines a weak enrichment of the set $Hom_{WE_{(sk)}(A,\otimes)}(C,D)$. To any ``$(A,\otimes)$-functors" $\Phi,\Psi : C \rightarrow D$, one attaches a category $P$, and defines the hom object between $\Phi$ and $\Psi$ to be a colimit of a certain functor $P \longrightarrow A$. Roughly speaking, $P$ keeps track of all arrows into to the product $\prod_{x \in Ob(C)}h_{D}(\Phi(x),\Psi(x))$  which respect the composition with any arrows ``coming from some $h_{C}(x,y)$," after one applies $sk$. $P$ is a full sub-category of the category of arrows over $\prod_{x \in Ob(C)}h_{D}(\Phi(x),\Psi(x))$. The objects of $P$ are all arrows $(a \xrightarrow{\pi} \prod_{x \in Ob(C)}h_{D}(\Phi(x),\Psi(x))$, such that for any $x,y \in Ob(C)$, for any $(t_{0} \xrightarrow{t} h_{C}(x,y)) \in Arr(A)$, tensoring $\pi$ with $t$, projecting to the $y$-component $\prod_{x \in Ob(C)}h_{D}(\Phi(x),\Psi(x)) \rightarrow h_{D}(\Phi(y),\Psi(y))$, and composing in $D$ is $(sk)$-equal to tensoring $t$ with $\pi$, projecting to the $x$-component, and composing in $D$. One defines $p : P \longrightarrow A$ to be the functor which remembers the domain of a given arrow. One associates to $\Phi$ and $\Psi$ the object $colim(p) \in Ob(A)$ (assuming that the colimit exists).

One composes, i.e. defines, for all $(A,\otimes)$-functors $\Phi,\Psi,\Xi$, an arrow 
$$
(h(\Phi,\Psi) \otimes h(\Psi,\Xi) \xrightarrow{\circ} h(\Phi,\Xi)) \in Arr(A)
$$ 

by taking the inverse of the arrow $colim \ (P_{\Phi,\Psi} \otimes P_{\Psi,\Xi}) \rightarrow (colim \ P_{\Phi,\Psi}) \otimes (colim \ P_{\Psi,\Xi})$ (that this is an isomorphism is assumed), and recognizing $colim \ (P_{\Phi,\Psi} \otimes P_{\Psi,\Xi})$ as an object in $P_{\Phi,\Xi}$ by using the composition in $D$ and the projection for the products to define arrows $P_{\Phi,\Psi}(x) \otimes P_{\Psi,\Xi}(y) \rightarrow \prod_{x \in Ob(C)}h_{D}(\Phi(x),\Xi(x))$. As an object in $P_{\Phi,\Xi}$, $colim \ (P_{\Phi,\Psi} \otimes P_{\Psi,\Xi})$ has assigned to it an arrow into $colim \ P_{\Phi,\Xi}$, which is defined to be the hom object assigned to $\Phi$ and $\Xi$. One composes this colimit arrow with the inverse of the first arrow to define the composition arrow.

\sss{Lemma on the Enrichment of $Hom_{WE_{(sk)}(A,\otimes)}(C,D)$} \label{HomEnr}

Suppose that $(A,\otimes)$ has a symmetrizer and associator for the tensor. 

\ref{HomEnr}.1. For any $\Phi,\Psi \in Hom_{WE(A,\otimes)(sk)}(C,D)$, define 
$$
P \subseteq \downarrow_{( A )}(Id_{A} , ob_{(A)}(\prod_{x \in Ob(C)} h_{D}(\Phi(x),\Psi(x))) )
$$

to be the full subcategory generated by objects (i.e. arrows $a \xrightarrow{\pi} \prod_{x \in Ob(C)} h_{D}(\Phi(x),\Psi(x))$ in $A$) such that for any $x,y \in Ob(C), (t_{0} \xrightarrow{t} h_{C}(x,y)) \in Arr(A)$, 
$$
sk(\circ_{D} \cdot (id_{h_{D}(\Phi(y),\Psi(y))} \otimes \Psi(x,y)) \cdot ((\pi \cdot \pi_{y}) \otimes t)) =
$$
$$ 
sk(\circ_{D} \cdot (\Phi (x,y) \otimes id_{h_{D}(\Phi(x),\Psi(x))}) \cdot \sigma \cdot ((\pi \cdot \pi_{x}) \otimes t))
$$

Then define $\bar{h}_{WE(sk)(A,\otimes)1}(C,D)(\Phi,\Psi)$ to be the colimit of the domain object functor $p : P \longrightarrow A$ defined by $(a,f) \mapsto a$.

\ref{HomEnr}.2. Suppose that the compostion on $D$ is $(sk)$-associative, and the arrows $u = u_{(p)} : colim \otimes_{i=1}^{n} \ p_{i} \rightarrow \otimes_{i=1}^{n} colim \ p_{i}$ are isomorphisms, defined as in the previous lemma, and $p = \{ (1,p_{\Phi,\Psi}) \} \cup \{ (2,p_{\Psi,X} ) \} : \{1,2\} \rightarrow Arr(\mathfrak{Cat})$. Then define the composition $\circ_{WE(A,\otimes)(sk)}(C,D)(\Phi,\Psi,X) \in Arr(A)$ by taking it to be the composition of the colimit arrow $e : colim (-_{1} \otimes -_{2}) \rightarrow h_{WE(A,\otimes)(sk)}(C,D)(\Phi,X)$ associated to the object $ (colim (-_{1} \otimes -_{2}),\phi ) \in Ob(P_{\Phi,X})$ determined by the arrow
$$
\phi : colim (-_{1} \otimes -_{2}) \rightarrow \prod_{a \in Ob(C)} h_{D}(\Phi(a), X(a))
$$

induced by sending any given $((a,f),(b,g)) \in Ob(P_{\Phi,\Psi}\times_{\mathfrak{Cat}} P_{\Psi,X})$ to the product arrow given to the assignment
$$
a \mapsto \circ_{D}(\Phi(a),\Psi(a),X(a))\cdot (\pi_{(\Phi,\Psi)a}\cdot f \otimes \pi_{(\Psi,X)a}\cdot g)
$$

with $u^{-1}$, I.e.
$$
\circ_{WE(A,\otimes)(sk)}(C,D)(\Phi,\Psi,X) := e \cdot u^{-1}
$$

\ref{HomEnr}.3. Define $\bar{h}_{WE(A,\otimes)(sk)}(C,D) := $
$$
(Hom_{WE_{(sk)}(A,\otimes)}(C,D),
$$
$$
\bar{h}_{WE(A,\otimes)(sk)1}(C,D)( \ , \ ) , \circ_{WE(A,\otimes)(sk)}(C,D)( \ , \ , \ ) ) \in Ob(WE(A,\otimes))
$$

i.e. part i. gives the hom objects and part ii. gives the composition.

\ref{HomEnr}.4.1. The enriched set $\bar{h}_{WE(A,\otimes)(sk)}(C,D)$ is $(sk)$-associative. 

\ref{HomEnr}.4.2. If $(A,\otimes)$ has a unit $I$ such that $\circ_{D}$ is $(Yo^{opp}_{(0)}(I))$-associative, then so does $\bar{h}_{WE(A,\otimes)(sk)}(C,D)$.

\begin{rem}
The enrichment on $Hom_{WE_{(sk)}(A,\otimes)}(C,D)$, i.e. the objects $h(\Phi,\Psi)$ defined in the previous lemma for $(A,\otimes)$-functors $\Phi$ and $\Psi$, were initially constructed as $(sk)$-equalizers. I believe that the present construction can also be realized as an $(sk)$-equalizer, but by use of a diagram containing arrows of the form $[ ( Homfun(A)\circ (( - \otimes J) \times id_{A}), \circ \circ (\pi \otimes id_{J})) ] \in Arr(\Omega)$, and with restrictions on $A$.
\end{rem}

\sss{$\bold{Definition}$ of the Enriched Arrows Functor}
If $(A,\otimes) \in Ob(\mathfrak{TCat})$ has coproducts, then define
$$
\bar{Arr}_{(A,\otimes)} : WE_{(A,\otimes)} \longrightarrow A
$$

by $(S,h,\circ) \mapsto \coprod_{s,t \in S} h(s,t)$ and $(F_{0},F_{1}) \mapsto \coprod_{s,t \in S} F_{1}(s,t)$.

\begin{rem}
The functor $sk$ is not referred to in this definition. $\bar{Arr}_{(A,\otimes)}$ is the ``enriched arrow functor."
\end{rem}

\begin{lem}
$\bar{Arr}_{(A,\otimes)}$ is faithful.
\end{lem}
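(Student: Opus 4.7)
The plan is to show that if two arrows $F=(F_{0},F_{1})$ and $G=(G_{0},G_{1})$ in $Hom_{WE_{(A,\otimes)}}((S,h_{S},\circ_{S}),(T,h_{T},\circ_{T}))$ satisfy $\bar{Arr}_{(A,\otimes)}(F) = \bar{Arr}_{(A,\otimes)}(G)$, then $F_{0}=G_{0}$ and $F_{1}=G_{1}$. The key is the universal property of coproducts: any arrow out of $\coprod_{s,t\in S}h_{S}(s,t)$ is determined by its precomposition with each canonical injection $\iota^{S}_{(s,t)}:h_{S}(s,t)\to\coprod_{s',t'\in S}h_{S}(s',t')$.

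First I would unpack the equality $\bar{Arr}(F)=\bar{Arr}(G)$. By the definition of the functor, this means that for every $(s,t)\in S^{2}$, the two arrows
$$
\iota^{T}_{(F_{0}(s),F_{0}(t))}\cdot F_{1}(s,t)\qquad\text{and}\qquad \iota^{T}_{(G_{0}(s),G_{0}(t))}\cdot G_{1}(s,t)
$$
coincide in $Hom_{A}(h_{S}(s,t),\coprod_{s',t'\in T}h_{T}(s',t'))$. Thus the question reduces to recovering both the indexing data $(F_{0}(s),F_{0}(t))$ and the morphism $F_{1}(s,t):h_{S}(s,t)\to h_{T}(F_{0}(s),F_{0}(t))$ from the composite $\iota^{T}_{(F_{0}(s),F_{0}(t))}\cdot F_{1}(s,t)$.

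Next I would invoke the standard ``disjointness'' property of coproducts (implicit in the lemma's use of coproducts in $A$): for distinct pairs $(s',t')\neq(s'',t'')$, the injections $\iota^{T}_{(s',t')}$ and $\iota^{T}_{(s'',t'')}$ have no common factorization through a non-degenerate object, and each $\iota^{T}_{(s',t')}$ is a monomorphism. Applying this to the equation obtained in the previous step, we first conclude that $(F_{0}(s),F_{0}(t))=(G_{0}(s),G_{0}(t))$ for every $(s,t)\in S^{2}$ (taking $t=s$ suffices to force $F_{0}=G_{0}$), and then, by monicity of $\iota^{T}_{(F_{0}(s),F_{0}(t))}$, that $F_{1}(s,t)=G_{1}(s,t)$ in $Hom_{A}(h_{S}(s,t),h_{T}(F_{0}(s),F_{0}(t)))$ for every $(s,t)$, proving $F=G$.

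The main potential obstacle is exactly the disjointness/monicity of coproduct injections: in a general tensor category with coproducts these need not hold automatically, so faithfulness can fail without further assumption on $(A,\otimes)$. I would therefore either point out that the argument requires coproducts in $A$ to be disjoint with monic injections (which holds in $U\text{-}\mathfrak{Set}$, in Abelian categories, in presheaf categories, and in most categories of interest for enrichment) or restrict the lemma's scope accordingly.
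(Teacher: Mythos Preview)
The paper gives no proof for this lemma; it is stated and left as evident. Your argument is exactly the natural unpacking of why it holds, and your identification of the tacit hypothesis --- that coproduct injections in $A$ be monic and pairwise disjoint --- is correct and is something the paper silently assumes (it holds in $U\text{-}\mathfrak{Set}$, $U\text{-}\mathfrak{Cat}$, and the $(U,n)\text{-}\mathfrak{Cat}$ that are the paper's working examples).

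Your caveat is not idle: without that hypothesis the lemma can genuinely fail. For instance, if $h_{S}(s,s)$ is initial in $A$ for every $s\in S$, then every composite $\iota^{T}_{(F_{0}(s),F_{0}(s))}\cdot F_{1}(s,s)$ is the unique arrow out of the initial object, so $\bar{Arr}_{(A,\otimes)}(F)=\bar{Arr}_{(A,\otimes)}(G)$ places no constraint on $F_{0}$ versus $G_{0}$. Thus your proposal is in fact more careful than the paper, which treats the statement as immediate; the only adjustment needed is to record the disjoint-coproduct hypothesis explicitly (or to restrict to the ambient categories actually used).
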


The following lemma concerns the self-enrichment of the category $WE_{Assoc(sk)}(A,\otimes)$. The enriched hom set defined in the previous lemma is denoted by ``$\bar{h}_{WE(A,\otimes)(sk)}(B,C)$." Part (i) of the following lemma defines the ``forward composition/pushforward functor," \newline $\bar{h}_{WE(A,\otimes)(sk)}(B,C) \rightarrow \bar{h}_{WE(A,\otimes)(sk)}(B,D)$. Part (ii) defines the ``backward composition/ pullback functor," $\bar{h}_{WE(A,\otimes)(sk)}(C,D) \rightarrow \bar{h}_{WE(A,\otimes)(sk)}(B,D)$. Part (iii) states that one can use these to define an arrow $(\bar{h}_{WE(A,\otimes)(sk)}(B,C) \times \bar{h}_{WE(A,\otimes)(sk)}(C,D)$ $\rightarrow$ $\bar{h}_{WE(A,\otimes)(sk)}(B,D))$ $\in Arr(WE_{Assoc(sk)}(A,\otimes))$ which gives the enriched composition in $WE_{Assoc(sk)}(A,\otimes)$.

\sss{Lemma on Composition Functors} \label{PushPull}
Given $(sk) \in Arr(Cat)$, for any $(F : C \rightarrow D)$, $(G : B \rightarrow C ) \in Arr(WE_{Assoc(sk)}(A,\otimes))$,

\ref{PushPull}.1.
$$
(F_{*} : \bar{h}_{WE(A,\otimes)(sk)}(B,C) \rightarrow \bar{h}_{WE(A,\otimes)(sk)}(B,D)) \in Arr(WE_{Assoc(sk)}(A,\otimes))
$$

is induced by $\prod_{a \in Ob(B)} h_{C}(\Psi_{1}(a),\Psi_{2}(a)) \rightarrow \prod_{a\in Ob(B)} h_{D}(F\cdot \Psi_{1}(a),F\cdot\Psi_{2}(a))$, which induces a functor $P_{\bar{h}_{WE(A,\otimes)(sk)}(B,C)(\Psi_{1},\Psi_{2})} \longrightarrow P_{\bar{h}_{WE(A,\otimes)(sk)}(B,D)(F\cdot\Psi_{1},F\cdot\Psi)}$, so that an arrow is induced from the colimit of the first diagram ($p_{\bar{h}_{WE(A,\otimes)(sk)}(B,C)(\Psi,\Psi_{2})}$ to the colimit of the second $p_{\bar{h}_{WE(A,\otimes)(sk)}(B,D)(\F\cdot\Psi_{1},F\cdot\Psi)}$.

\ref{PushPull}.2.
$$
(G^{*} : \bar{h}_{WE(A,\otimes)(sk)}(C,D) \rightarrow \bar{h}_{WE(A,\otimes)(sk)}(B,D)) \in Arr(WE_{Set(sk)}(A,\otimes))
$$

is induced by $\prod_{a \in Ob(C)} h_{D}(\Phi_{1}(a),\Phi_{2}(a)) \rightarrow \prod_{a \in Ob(B)} h_{D}(\Phi_{1}\cdot G(a), \Psi_{2} \cdot G(a))$, which is the product map induced by the assignment $(a \mapsto \pi_{G(a)})$.

These are analogues to the usual forward and backward functors associated to composition on either end of a functor category $Hom(B,C)$.

\ref{PushPull}.3. From an arrow of functors $\alpha : \times_{A} \rightarrow \otimes$, the previous two constructions, and the product structure, construct an arrow in $WE_{Set(A,\otimes)}$

$$
\bar{h}_{WE(A,\otimes)(sk)}(B,C) \times_{WE_{Assoc(sk)}(A,\otimes)} \bar{h}_{WE(A,\otimes)(sk)}(C,D) \longrightarrow \bar{h}_{WE(A,\otimes)(sk)}(B,D)
$$

(Not unique. The choice corresponds with the choice, of the path $F_{1}\cdot G_{1} \rightarrow F_{1} \cdot G_{2} \rightarrow F_{2} \times G_{2}$, versus the path $F_{1}\cdot G_{1} \rightarrow F_{2} \cdot G_{1} \rightarrow F_{2} \times G_{2}$).

\ref{PushPull}.4. Defining $\bar{\circ} : Ob(WE_{Ass(A,\otimes)(sk)})^{3} \mapsto Arr(WE_{Ass(A,\otimes)(sk)})$ by sending $(B,C,D) \in Ob(WE_{Assoc(sk)}(A,\otimes)$ to the arrow in (iii), where $WE_{Assoc(sk)}(A,\otimes) \subseteq WE_{Assoc(sk)}(A,\otimes)$ is defined to the 
$$
(Ob(WE_{Assoc(sk)}(A,\otimes)),\bar{h}_{WE_{Assoc(sk)}(A,\otimes)},\bar{\circ})
$$

is an $(WE_{Ass(sk)}(A,\otimes),\times_{WE_{Assoc(sk)}}(A,\otimes)$-enriched set, whose composition is $(Ob)$-\newline associative and $(sk\cdot\bar{Arr}_{(A,\otimes)})$-associative.

\begin{proof}
Parts i. and ii. consist only in checking for $(sk)$-commutativity so that the constructions can be made. Part iii., states that for any $C,D,E \in Ob(WE(A,\otimes))$, for any $\Phi_{1},\Phi_{2},\Phi_{3} \in Ob(\bar{h}_{WE(A,\otimes)(sk)}(C,D))$, for any $\Psi_{1},\Psi_{2},\Psi_{3} \in Ob(\bar{h}_{WE(A,\otimes)(sk)}(D,E)$,
$$
\circ_{\bar{h}_{(C,E)}}(\Psi_{1}\cdot \Phi_{1} , \Psi_{2} \cdot \Phi_{2}, \Psi_{3} \cdot \Phi_{3}) \cdot
$$
$$
( \circ_{\bar{h}_{(C,E)}}(\Psi_{1}\cdot \Phi_{1} , \Psi_{1} \cdot \Phi_{2}, \Psi_{2}\cdot \Phi_{2}) \otimes \circ_{\bar{h}_{(C,E)}}(\Psi_{2}\cdot\Phi_{2} ,\Psi_{2}\cdot \Phi_{3}, \Psi_{3}\cdot\Phi_{3})) \cdot
$$
$$
((\Phi_{2}^{*}\otimes\Psi_{1*}) \otimes (\Phi_{3}^{*} \otimes \Psi_{2*})) \cdot \sigma_{1*} =_{(sk)}
$$
$$
\circ_{\bar{h}_{(C,E)}}(\Psi_{1}\cdot\Phi_{1} , \Psi_{1}\cdot\Phi_{3} , \Psi_{3}\cdot\Phi_{3}) \cdot(\Phi_{3}^{*} \otimes \Psi_{1*}) \cdot (\circ_{\bar{h}_{(D,E)}}(\Psi_{1},\Psi_{2},\Psi_{3})\otimes \circ_{\bar{h}_{(C,D)}}(\Phi_{1},\Phi_{2},\Phi_{3}) ))
$$

given that $colim(p) \in P$ with a monic arrow into the relevant product, and that $\forall f,g : x \rightarrow \prod_{i \in I}y_{i}$, $\forall i \in I, sk(\pi_{i}\cdot f) = sk(\pi\cdot g) \Longrightarrow sk(f) = sk(g)$.

All arrows between the objects $\bar{h}_{WE(A,\otimes)(sk)}(C,D) \rightarrow \bar{h}_{WE(A,\otimes)(sk)}(C',D')$ commute with monic arrows $\bar{h}_{WE(A,\otimes)(sk)}(C,D) \rightarrow \prod_{c \in Ob(C)} h_{D}(F(c),G(c))$.

After taking the inverse of the isomorphism $\otimes colim \ p_{i} \leftarrow colim \otimes \ p_{i}$ (that this is an isomorphism is assumed), these maps are determined by the arrows $\Psi_{i*}$ and $\Phi_{i}^{*}$. On the components of the product $\Phi_{i}^{*}$ come from identity arrows and $\Psi_{i*}$ from $\Psi(a,b)$.
$$
\text{Diagram with two arrows,}
$$
$$
\prod_{a \in Ob(D)} h_{E}(\Psi_{1}(a),\Psi_{2}(a)) \otimes \prod_{a \in Ob(D)} h_{E}(\Psi_{2}(a),\Psi_{3}(a)) \otimes
$$
$$
\prod_{a \in Ob(C)} h_{D}(\Phi_{1}(a),\Psi_{2}(a)) \otimes \prod_{a \in Ob(C)} h_{D}(\Phi_{2}(a),\Phi_{3}(a))
$$
$$
\rightarrow h_{E}(\Psi_{1}\circ\Phi_{1}(a),\Psi_{3}\circ\Phi_{3}(a))
$$

 (one side is $\Phi^{*}_{3} \otimes \Phi^{*}_{3} \otimes \Psi_{1*} \otimes \Psi_{1*}$ and the other is $\Phi_{2}^{*} \otimes \Phi_{3}^{*} \otimes \Psi_{1*} \otimes \Psi_{2*}$). The $\Phi^{*}_{3} \otimes \Phi^{*}_{3} \otimes \Psi_{1*} \otimes \Psi_{1*}$ side is
$$
\Pi \rightarrow
$$
$$
h_{E}(\Psi_{1}\cdot\Phi_{3} (a) , \Psi_{2} \cdot \Phi_{3}(a)) \otimes h_{E}(\Psi_{2},\Phi_{3}(a),\Psi_{3}\cdot\Phi_{3}(a)) \otimes h_{D}(\Phi_{1}(a),\Phi_{2}(a))\otimes h_{D}(\Phi_{2}(a),\Phi_{3}(a))
$$
$$
\xrightarrow{id \otimes id \otimes \Psi_{1}(\Phi_{1}(a),\Phi_{2}(a)) \otimes \Psi_{1}(\Phi_{2}(a),\Phi_{3}(a))}
$$
$$
h_{E}(\Psi_{1}\cdot\Phi_{3}(a),\Psi_{2}\cdot\Phi_{3}(a))\otimes h_{E}(\Psi_{2}\cdot \Phi_{3}(a),\Psi_{3}\cdot\Phi_{3}(a)) \otimes
$$
$$
h_{E}(\Psi_{1}\cdot\Phi_{1}(a),\Psi_{1}\cdot\Phi_{2}(a))\otimes h_{E}(\Psi_{1}\cdot\Phi_{2}(a) , \Psi_{1}\cdot\Phi_{3}(a))
$$
$$
\xrightarrow{\circ_{E}} h_{E}(\Psi_{1}\cdot\Phi_{1}(a),\Psi_{3}\cdot\Phi_{3}(a))
$$

The $\Phi_{2}^{*} \otimes \Phi_{3}^{*} \otimes \Psi_{1*} \otimes \Psi_{2*}$ side is
$$
\Pi \rightarrow
$$
$$
h_{E}(\Psi_{1}\cdot\Phi_{2}(a),\Psi_{2}\cdot\Phi_{2}(a)) \otimes h_{E}(\Psi_{2},\Psi_{3}(a),\Psi_{3}\cdot\Phi_{3}(a)) \otimes h_{D}(\Phi_{1}(a),\Phi_{2}(a)) \otimes h_{D}(\Phi_{2}(a),\Phi_{3}(a))
$$
$$
\xrightarrow{id \otimes id \otimes \Psi_{1}(\Phi_{1}(a),\Phi_{2}(a)) \otimes \Psi_{2}(\Phi_{2}(a),\Phi_{3}(a)) \cdot \sigma}
$$
$$
h_{E}(\Psi_{2}\cdot\Phi_{3}(a),\Psi_{3}\cdot\Phi_{3}(a))\otimes h_{E}(\Psi_{1}\cdot\Phi_{2}(a),\Psi_{2}\cdot\Phi_{2}(a))\otimes 
$$
$$
h_{E}(\Psi_{2}\cdot\Phi_{2}(a),\Psi_{2}\cdot\Phi_{3}(a))\otimes h_{E}(\Psi_{1}\cdot\Phi_{1}(a),\Psi_{1}\cdot\Phi_{1}(a))
$$
$$
\xrightarrow{\circ_{E}}  h_{E}(\Psi_{1}\cdot\Phi_{1}(a),\Psi_{3}\cdot\Phi_{3}(a))
$$

By definition of $P_{\bar{h}_{(C,D)}(\Psi_{1},\Psi_{2})}$, in particular, ``commutativity" of the composition with any arrow going through a hom object of $C$, the two arrows are $(sk)$-equal.

\end{proof}
\begin{rem}
On underlying ``objects" this is the usual composition (e.g. 1-composition, of functors).
\end{rem}

\sss{Lemma} For any arrow of functors $\rho : \times_{A} \rightarrow \otimes_{A}$, for any $\bar{S},\bar{T} \in$ \newline $Ob(WE_{Assoc(sk)}(A,\otimes))$, such that the diagrams determining the enrichment on $\bar{h}(\bar{S},\bar{T})$ satisfy the $P$-colimit inclusion condition, we construct two arrows of enriched sets
$$
\eta_{l} = (\eta_{l0},\eta_{l1}) : \bar{S} \times_{WE_{Assoc(sk)}(A,\otimes)} \bar{h}(\bar{S},\bar{T}) \rightarrow \bar{T}
$$
$$
\eta_{r} = (\eta_{r0},\eta_{r1}) : \bar{h}(\bar{S},\bar{T}) \times_{WE_{Assoc(sk)}(A,\otimes)} \bar{S} \rightarrow \bar{T}
$$

by the following. We explicitly describe $\eta_{l}$. For any $(s,\phi) \in S \times Hom_{WE_{Assoc(sk)}(A,\otimes)}(\bar{S},\bar{T})$ $\cong Ob(\bar{S} \times \bar{h}(\bar{S},\bar{T}))$,
$$
\eta_{l0}(s,\phi) := \phi_{0}(s)
$$

and for any $(s,\phi),(t,\psi) \in S \times Hom_{WE_{Assoc(sk)}(A,\otimes)}(\bar{S},\bar{T}) \cong Ob(\bar{S} \times \bar{h}(\bar{S},\bar{T}))$, if $q : h_{\bar{h}(\bar{S},\bar{T})}(\phi,\psi) \rightarrow \prod_{s_{0} \in S} h_{T}(\phi(s_{0}),\psi(s_{0}))$ is the arrow in $A$ given by the colimit universality, determining an object in $P$ of the enrichment,
$$
\eta_{l1}((s,\phi),(t,\psi)) := \circ_{T}(\phi(s),\phi(t),\psi(t)) \cdot (\phi(s,t) \otimes (\pi_{t} \cdot q)) \cdot \rho((h_{S}(s,t),h_{\bar{h}(\bar{S},\bar{T})}(\phi,\psi))).
$$

Define $\eta_{r}$ so that
$$
\eta_{r1}((s,\phi),(t,\psi)) := \circ_{T}(\phi(s),\psi(s),\psi(t)) \cdot ((\pi_{s}\cdot q) \otimes \psi(s,t)) \cdot \rho((h_{\bar{h}(\bar{S},\bar{T})}(\phi,\psi)),h_{S}(s,t)).
$$

\sss{$\bold{Definition}$}\label{DefWEIn} $WE$ with initial object. For any $(A,\otimes)$, for any initial object $\emptyset_{A} \in Ob(A)$, we make the following definitions.

\ref{DefWEIn}.1. For any two $(sk)$-commutative arrows of functors $\lambda_{A} : \emptyset_{A} \otimes Id_{A} \rightarrow Id_{A} \leftarrow Id_{A} \otimes \emptyset_{A} : \rho_{A}$ we define $WE_{(sk,\lambda_{A},\rho_{A})}(A,\otimes) \subseteq WE_{Assoc(sk)}(A,\otimes)$ to be the full subcategory generated by enriched sets $\bar{S} = (S,h_{S},\circ_{S}) \in Ob(WE_{Assoc(sk)}(A,\otimes)$ such that for any $a,b,c \in S$, 
$$
sk(\circ_{S}(a,b,c) \cdot (e_{h_{S}(a,b)} \otimes id_{h_{S}(b,c)})) = sk(\alpha_{l}(h_{S}(b,c))).
$$

\ref{DefWEIn}.2. For any associator $\alpha$ so that $(A,\otimes,\alpha) \in U-\mathfrak{ATCat}$, for any two $(sk)$-arrows of functors $\lambda_{A} : \emptyset_{A} \otimes Id_{A} \rightarrow Id_{A} \leftarrow Id_{A} \otimes \emptyset_{A} : \rho_{A}$ such that
$$
Hom^{(1)}_{U-\mathfrak{Cat}^{2}}(Id_{A_{0}},sk)(\lambda_{A} \cdot Hom^{(1)}_{U-\mathfrak{Cat}^{2}}(Id_{A_{0}},\emptyset_{A}\otimes Id_{A})(\rho_{A}) ) = 
$$
$$
Hom^{(1)}_{U-\mathfrak{Cat}^{2}}(Id_{A_{0}},sk)(\rho_{A} \cdot Hom^{(1)}_{U-\mathfrak{Cat}^{2}}(Id_{A_{0}},Id_{A} \otimes \emptyset_{A})(\lambda_{A}) \cdot Hom^{(1)}_{U-\mathfrak{Cat}^{2}}(\varepsilon_{\emptyset_{A}\times A_{0} \times \emptyset_{A}}^{A_{0}\times A_{0} \times A_{0}},Id_{A})(\alpha) )
$$

and
$$
Hom^{(1)}_{U-\mathfrak{Cat}^{2}}(Id_{A_{0}} \times Id_{A_{0}},sk) (Hom^{(1)}_{U-\mathfrak{Cat}^{2}}(Id_{A_{0}}\times Id_{A_{0}}, \otimes)((id_{\varepsilon_{A_{0}}^{A}},\lambda_{A}))) =
$$
$$
Hom^{(1)}_{U-\mathfrak{Cat}^{2}}(Id_{A_{0}} \times Id_{A_{0}},sk) (Hom^{(1)}_{U-\mathfrak{Cat}^{2}}(Id_{A_{0}}\times Id_{A_{0}}, \otimes)((\rho_{A},id_{\varepsilon_{A_{0}}^{A}})) \cdot
$$
$$
Hom^{(1)}_{U-\mathfrak{Cat}^{2}}(\varepsilon_{A_{0} \times \emptyset_{A} \times A_{0}}^{A_{0} \times A_{0} \times A_{0}},Id_{A})(\alpha))
$$

i.e., which are $sk$-associative, we define $WE_{(sk,\alpha,\lambda_{A},\rho_{A})}(A,\otimes) \subseteq WE_{(sk,\alpha)}(A,\otimes)$ to be the full subcategory generated by enriched sets $\bar{S} = (S,h_{S},\circ_{S}) \in Ob(WE_{Assoc(sk)}(A,\otimes)$ such that for any $a,b,c \in S$, 
$$
sk(\circ_{S}(a,b,c) \cdot (e_{h_{S}(a,b)} \otimes id_{h_{S}(b,c)})) = sk(\lambda_{A}(h_{S}(b,c)))
$$

and
$$
sk(\circ_{S}(a,b,c) \cdot (id_{h_{S}(a,b)} \otimes e_{h_{S}(b,c)} )) = sk(\rho_{A}(h_{S}(a,b))).
$$

\sss{Example} $U-\mathfrak{Set}$, taking the initial object to be the empty set. The product of the empty set with any set is empty, and so the equality is trivial.

\sss{Example} $n-\mathfrak{Cat}$, taking the initial object to be the empty category.  As above.

\sss{Example} Pointed $\mathfrak{Set}$. The initial object is the set with one element, $\{ \emptyset \}$, the one element distinguished. In this case one would require that composition of any element with the distinguished element should return the original element, as an identity arrow.

\sss{Lemma} The category $WE_{(sk,\alpha,\lambda_{A},\rho_{A})}(A,\otimes)$ of $(A,\otimes)$-enriched sets with an initial object has coproducts.

\sss{$\bold{Definition}$} For any tensor category $(A,\otimes)$ with an action of an initial object $\lambda_{A} : \emptyset_{A} \otimes Id_{A} \rightarrow Id_{A} \leftarrow Id_{A} \otimes \emptyset_{A} : \rho_{A}$, define a functor $Bar_{(\lambda_{A},\rho_{A})} : A \longrightarrow WE_{(sk)}(A,\otimes)$, on objects by
$$
Bar_{(\lambda_{A},\rho_{A})(0)} : a \mapsto (\{ 1,2 \} , h_{Bar_{(\lambda_{A},\rho_{A})}(a)} := \{  ((1,1),\emptyset_{A}),((1,2),a),((2,1),\emptyset_{A}),((2,2),\emptyset_{A}) \} ,
$$
$$
\{((1,1,1),\lambda_{A})(\emptyset_{A}),((1,1,2),\lambda_{A}(a)),((1,2,2),\rho_{A}(a)),
$$
$$
((2,2,2),\rho_{A}(\emptyset_{A})),((2,2,1),\lambda_{A}(a)),((2,1,1),\rho_{A}(a))  \} )
$$

and on arrows by
$$
Bar_{(\lambda_{A},\rho_{A})(1)} : \phi \mapsto \{ ... , ((1,2),\phi),... \}
$$

so that it takes the arrow $\phi$ itself for the non-trivial (non-initial) part of the enrichment.

\sss{$\bold{Definition}$}\label{DefUnit} For any $(A,\otimes,\alpha) \in Ob(\mathfrak{ATCat})$, for any functor $sk : A \longrightarrow B$, for any $(sk)$-action $(\lambda_{A},\rho_{A})$ of an initial object $\emptyset_{A} \in Ob(A)$, we define the category $\mathfrak{Unit}_{(sk)}(A,\otimes)$ and the functor 
$$
\Delta-Unit_{(\lambda_{A},\rho_{A})} : \Delta_{inj} \times_{U-\mathfrak{Cat}} \mathfrak{Unit}(A,\otimes) \longrightarrow WE_{sk}(A,\otimes)
$$ 

by the following.

\ref{DefUnit}.1. The category $\mathfrak{Unit}_{(sk)}(A,\otimes)$ has for objects a sort of class of monads, arrows $\mu : a \otimes a \rightarrow a$ in $A$, and for arrows arrows $f : a \rightarrow b$ which ``respect the multiplication."
$$
\mathfrak{Unit}_{(sk)}(A,\otimes) := ( O :_{t}= \{ (a,\mu) \in Ob(A) \times Arr(A) ; \mu \in Hom_{(A)}(a\otimes a,a) \} ,
$$
$$
\coprod Hom_{0}((a,\mu),(b,\nu)) , Hom_{0} :_{t}= \{ (((a,\mu),(b,\nu)),
$$
$$
\{ f \in Hom_{(A)}(a,b) ; sk(\nu \cdot (f \otimes f)) = sk(f \cdot \mu) \}) \in (O \times O) \times U ; taut \} , ... )
$$

\ref{DefUnit}.2. The functor $\Delta-Unit_{\lambda_{A},\rho_{A}}) := (\Delta-Unit_{0},\Delta-Unit_{1})$ is defined on objects, so that for any $n \in \mathbb{N}$, for any $(a,\mu) \in Ob(\mathfrak{Unit}(A,\otimes))$
$$
\Delta-Unit_{0} : ((0,...,n),(a,\mu)) \mapsto (\{0,...,n\},
$$
$$
h :_{t}=  \{ ((i,j) \} \cup \{ ((i,j) \},
$$
$$
\circ :_{t} = \{ ((i,j,k),\lambda_{A}(\emptyset_{A}) ) \in ; \ulcorner i = j = k\urcorner \text{ and } \ulcorner i < n\urcorner \} \cup
$$
$$
\{ ((i,j,k),\lambda_{A}(a)) \in ; i = j < k\} \cup \{ ((i,j,k),\mu) \in ; i < j < k \} \cup 
$$
$$
\{ ((i,j,k),\rho_{A}(a)) \in ; i < j = k\} \cup \{ ((n,n,n),\rho_{A}(\emptyset_{A})) \} ),
$$

and on arrows, so that for any $(e,f) \in Arr(\Delta_{inj} \times\mathfrak{Unit}(A,\otimes))$,

$$
\Delta-Unit_{1} : (e,f) \mapsto (e ,
$$
$$
\{ ((i,j),id_{\emptyset_{A}}) \in Ob(dom((e,f)))^{2} \times Arr(A) ; i = j \} \cup
$$
$$
\{ ((i,j),f) \in Ob((dom((e,f))^{2} \times Arr(A) ; i < j \} ) \in Arr(WE_{(sk)}(A,\otimes))
$$

\sss{Lemma} If the action $(\lambda_{A},\rho_{A})$ is $(\alpha)$-associative then $\Delta-Unit_{(\lambda_{A},\rho_{A})}$ (and \newline $Bar_{(\lambda_{A},\rho_{A})}$) factors through $WE_{(sk,\alpha)}(A,\otimes)$, i.e. produces $(sk)$-associative enriched sets.

\sss{Lemma}\label{WEUnit} If the product structure $(A,\times_{A})$ has a unit $(I_{0},\lambda_{0},\rho_{0})$, then for any arrow $\mu_{0}$, that $(I_{0},\mu_{0}) \in Ob(\mathfrak{Unit}_{(sk)}(A,\otimes))$, i.e. that $\mu$ gave a sort of monad structure to $I_{0}$, would imply that $(WE_{(sk)}(A,\otimes),\times_{WE_{(sk)}(A,\otimes)})$ had a unit, given by the enriched set $( \{ \emptyset \} , h_{0} \circ_{0})$ with one element, single hom object $h_{0} = I_{0}$, and composition $\circ_{0}(\emptyset,\emptyset,\emptyset) = \mu)$.

\sss{Lemma} If $A$ has an initial object with an action $\lambda_{A},\rho_{A}$, and $F : A \longrightarrow U-\mathfrak{Set}$ is representable, and preserves coproducts, then the functor 
$$
F \circ \bar{Arr}_{(A,\otimes,)} \cong Arr \circ For^{WE_{(sk,F,\rho)}(A,\otimes)}_{U-\mathfrak{Cat}} : WE_{(sk,F,\rho)}(A,\otimes) \longrightarrow U-\mathfrak{Set}
$$

is representable.

\sss{$\bold{Definition}$. of Enriched Sets with Units} For any tensor category $(A,\otimes)$ for any functor $sk : A \longrightarrow B$, for any $(A,\otimes)$-unit$_{(sk)}$  $(I,\lambda_{A},\rho_{A})$ we make the following definitions.

(i). Define a category $WE_{(sk,I,\lambda_{A},\rho_{A})}(A,\otimes) \in Ob(U'-\mathfrak{Cat})$ so that its objects are enriched sets with unit data,
$$
Ob(WE_{(sk,I,\lambda_{A},\rho_{A})}(A,\otimes)) =
$$
$$
\{ (S,h_{S},\otimes_{S},i) \in U ; i \in Hom_{U-\mathfrak{Set}}(S,Arr(A)) \text{ and }
$$
$$
\forall s,t \in S, \ulcorner\ulcorner i(s) \in Hom_{A}(I,h_{S}(s,s) \urcorner \text{ and } 
$$
$$
\ulcorner \forall \lambda^{inv},\rho^{inv} \in Arr(A), 
$$
$$
\ulcorner\ulcorner\ulcorner sk(\lambda_{A}(h_{S}(s,t)) \cdot \lambda^{inv}) = sk(id) \urcorner \text{ and } \ulcorner sk(\lambda^{inv} \cdot \lambda_{A}(h_{S}(s,t))) = sk(id) \urcorner \text{ and }
$$
$$
\ulcorner sk(\rho_{A}(h_{S}(s,t)) \cdot \rho^{inv}) = sk(id) \urcorner \text{ and } \ulcorner sk( \rho^{inv} \cdot \rho_{A}(h_{S}(s,t))) = sk(id) \urcorner\urcorner \Longrightarrow
$$
$$
\ulcorner sk(\circ(s,s,t) \cdot (i(s) \otimes id_{h_{S}(s,t)}) \cdot \lambda_{A}(h_{S}(s,t))^{-1} = sk(id_{h_{S}(s,t)}) =
$$
$$
sk(\circ(s,t,t) \cdot (id_{h_{S}(s,t)} \otimes i(t)) \cdot \rho_{A}(h_{S}(s,t))^{-1} \urcorner\urcorner\urcorner \urcorner \} 
$$

and its arrows are those of $WE_{(sk)}(A,\otimes)$ which preserve the unit, so that for all $\bar{S} = (S,h_{S},\circ_{S},i_{S}), \bar{T} = (T,h_{T},\circ_{T},i_{T}) \in Ob(WE_{(sk,I,\lambda_{A},\rho_{A})}(A,\otimes))$,
$$
Hom_{WE_{(sk,I,\lambda_{A},\rho_{A})}(A,\otimes)}(\bar{S},\bar{T}) = 
$$
$$
\{ (f_{0},f_{1}) \in Hom_{WE_{(sk)}(A,\otimes)}((S,h_{S},\circ_{S},(T,h_{T},\circ_{T})) ; \forall s \in S, sk(f_{1}(s,s)\cdot i_{S}(s)) = sk(i_{T}(s)) \}
$$

(ii). Define the forgetful functor $For^{WE_{(sk,I,\lambda_{A},\rho_{A})}(A,\otimes)}_{WE_{(sk)}(A,\otimes)} : WE_{(sk,I,\lambda_{A},\rho_{A})}(A,\otimes) \longrightarrow$ \newline $WE_{Assoc(sk)}(A,\otimes)$ by forgetting the unit data, $(S,h_{S},\circ_{S},i_{S}) \mapsto (S,h_{S},\circ_{S})$.

\sss{Lemma. Pre-Curry}\label{PreCurry} For any $(A,\otimes)$ with $(sk)$-unit$(A,\otimes)$ data $(I,\lambda_{A},\rho_{A})$, and $(sk)$-unit$(A,\times_{A})$ data $(I_{0},\lambda_{0A},\lambda_{0A})$ for any arrow $c \in Hom_{A}(I_{0},I)$ in $A$ we have the following.

\ref{PreCurry}.1. Define an arrow $Cur_{0}$ of sets by the following. For any arrow of $(A,\otimes)$-enriched sets
$$
F = (F_{0},F_{1}) \in Hom_{WE_{Assoc(sk)}(A,\otimes)}(For^{WE_{(sk,I,\lambda_{A},\rho_{A})}(A,\otimes)}_{WE_{Assoc(sk)}(A,\otimes)}(\bar{S}) \times For^{WE_{(sk,I,\lambda_{A},\rho_{A})}(A,\otimes)}_{WE_{Assoc(sk)}(A,\otimes)}(\bar{T}),\bar{U})
$$

there is an arrow of $(A,\otimes)$-enriched sets
$$
Cur_{0}(F) = F' = (F'_{0},F'_{1}) \in 
$$
$$
Hom_{(WE_{Assoc(sk)}(A,\otimes))}(For^{WE_{(sk,I,\lambda_{A},\rho_{A})}(A,\otimes)}_{WE_{Assoc(sk)}(A,\otimes)}(\bar{S}) , \bar{h}(For^{WE_{(sk,I,\lambda_{A},\rho_{A})}(A,\otimes)}_{WE_{Assoc(sk)}(A,\otimes)}(\bar{T}),\bar{U}))
$$

defined by the following. For all $s \in S$ the object 
$$
F'_{0}(s) = (F'_{00},F'_{01}) \in Ob(\bar{h}(For^{WE_{(sk,I,\lambda_{A},\rho_{A})}(A,\otimes)}_{WE_{Assoc(sk)}(A,\otimes)}(\bar{T}),\bar{U}))
$$

is an arrow in $WE_{Assoc(sk)}(A,\otimes)$ for some maps of sets $F'_{0}$ and $F'_{1}$, such that for all $t,t' \in T$,
$$
F'_{00}(t) = F_{0}(s,t)
$$

and
$$
F'_{01}(t,t') = F((s,s),(t,t'))\cdot ((i_{S}(s) \cdot c) \times id) \cdot \lambda_{0A}(h_{T}(t,t')).
$$

For all $s_{1},s_{2} \in S$, the arrow $F'_{1}(s_{1},s_{2}) : h_{S}(s_{1},s_{2}) \rightarrow \bar{h}(F'_{0}(s_{1}),F'_{0}(s_{2}))$ in $A$ is the colimit arrow given by the enrichment lemma diagram to the object  $(h_{S}(s_{1},s_{2}),\pi,\emptyset)$ in the arrow category by the arrow 
$$
\pi : h_{S}(s_{1},s_{2}) \rightarrow \prod_{t \in T} h_{U}((F'_{0}(s_{1}),F'_{0}(s_{2}))) = \prod_{t\in T} h_{U}( F(s_{1},t),F(s_{2},t))
$$

determined by the map
$$
t \mapsto F((s_{1},s_{2}),(t,t))\cdot (id \times (i_{S}(t) \cdot c)) \cdot \rho_{0A}(h_{S}(s_{1},s_{2})).
$$

\ref{PreCurry}.2. If the $P$-colimits are objects of $P$, then for any two arrows (objects in the hom enriched set)
$$
F = (F_{0},F_{1}), G = (G_{0},G_{1}) \in 
$$
$$
Hom_{WE_{Assoc(sk)}(A,\otimes)}(For^{WE_{(sk,I,\lambda_{A},\rho_{A})}(A,\otimes)}_{WE_{Assoc(sk)}(A,\otimes)}(\bar{S}) \times For^{WE_{(sk,I,\lambda_{A},\rho_{A})}(A,\otimes)}_{WE_{Assoc(sk)}(A,\otimes)}(\bar{T}),\bar{U})
$$

define an arrow
$$
Cur_{1} : h_{\bar{h}(For^{WE_{(sk,I,\lambda_{A},\rho_{A})}(A,\otimes)}_{WE_{Assoc(sk)}(A,\otimes)}(\bar{S}) \times For^{WE_{(sk,I,\lambda_{A},\rho_{A})}(A,\otimes)}_{WE_{Assoc(sk)}(A,\otimes)}(\bar{T}),\bar{U})}(F,G) \rightarrow 
$$
$$
h_{\bar{h}(For^{WE_{(sk,I,\lambda_{A},\rho_{A})}(A,\otimes)}_{WE_{Assoc(sk)}(A,\otimes)}(\bar{S}), \bar{h}( For^{WE_{(sk,I,\lambda_{A},\rho_{A})}(A,\otimes)}_{WE_{Assoc(sk)}(A,\otimes)}(\bar{T}),\bar{U}))}(F',G')
$$

by assigning, to each $s \in S$ the colimit arrow
$$
h_{...}(F,G) \rightarrow h_{...}(F'(s),G'(s))
$$

assigned to the $P$-object given by the projection
$$
h_{...}(F,G) \rightarrow \prod_{(s_{0},t_{0}) \in S \times T} h_{U}(F(s_{0},t_{0}),G(s_{0},t_{0})) \xrightarrow{\pi} \prod_{t_{0} \in T} h_{U}(F(s,t_{0}),G(s_{0},t_{0})),
$$

whence we obtain a $P$-arrow
$$
h_{...}(F,G) \rightarrow \prod_{s \in S} h_{...}(F'(s),G'(s))
$$

whence the colimit arrow
$$
Cur_{1} : h_{...}(F,G) \rightarrow h_{...}(F',G').
$$

The pair 
$$
Cur := ( Cur_{0} , Cur_{1} ) \in
$$
$$
Hom_{WE_{(sk)}(A,\otimes}(\bar{h}( For^{...}_{...}(\bar{S}) \times For^{...}_{...}(\bar{T}) , \bar{U}) , \bar{h}( For^{...}_{...}(\bar{S}) , \bar{h}(For^{...}_{...}(\bar{T}) , \bar{U}) ))
$$

is an arrow of $(A,\otimes)$-enriched sets.

\ref{PreCurry}.3. If the $P$-colimits are objects of $P$, with monic arrows into the relevant products, then this is a natural transformation of functors
$$
(WE_{(sk,I,\lambda_{A},\rho_{A})}(A,\otimes))^{opp} \times (WE_{(sk,I,\lambda_{A},\rho_{A})}(A,\otimes))^{opp} \times WE_{Assoc(sk)}(A,\otimes) \longrightarrow U'-\mathfrak{Set}
$$

\sss{Remark} One would expect, that if the tensor structure were the product structure, then the arrow $c$ of the previous should be the identity.

\sss{Lemma}\label{EnrichProd} Let $(A,\otimes)$ be a tensor category with a functorial product $ \times_{A} : A \times A \longrightarrow A$, and $F : I \longrightarrow A$, $G : J \longrightarrow A$ any functors with limits $(l_{F},\lambda_{F})$ and $(l_{G},\lambda_{G})$. Temporarily define the arrow 
$$
\tau : colim(\times_{A} \cdot (F \times_{U-\mathfrak{Cat}} G)) \rightarrow colim(F) \times_{A} colim(G)
$$

to be that which is naturally induced by the assignment to each $(i,j) \in Ob(I \times J)$ of the arrow $t : \times(F(i),F(j)) \rightarrow colim(F) \times_{A} colim(G)$ such that
$$
\pi_{1} \cdot t = \lambda_{F}(id_{l_{F}})(i) \text{ and } \pi_{2} \cdot t = \lambda_{G}(id_{l_{G}})(j).
$$

If $\tau$ is an isomorphism, then for any $\bar{S},\bar{T},\bar{U} \in Ob(WE_{(sk)}(A,\otimes)$, 
$$
\bar{h}(\bar{S},\bar{T} \times \bar{U}) \cong \bar{h}(\bar{S},\bar{T}) \times \bar{h}(\bar{S},\bar{U}).
$$

\sss{$\bold{Definition}$. of the Skeleton Quotient}\label{SkelQuot} For any $(A,\otimes,\alpha) \in Ob(\mathfrak{ATCat})$, for any functor $sk : A \longrightarrow B$, for any $(sk)$-unit $(I,\lambda_{A},\rho_{A}) \in Ob(A) \times Arr(Hom_{U-\mathfrak{Cat}^{2}}^{(1)}(A,A)) \times Arr(Hom_{U-\mathfrak{Cat}^{2}}^{(1)}(A,A))$, such that the arrows of functors
$$
\lambda_{A} : Id_{A} \otimes - \rightarrow Id_{A}
$$
$$
\rho_{A} : - \otimes Id_{A} \rightarrow Id_{A}
$$ 

are isomorphisms, we define the functor
$$
SK_{(A,\otimes,sk,\alpha,I,\lambda_{A},\rho_{A})} : WE_{Assoc(sk)}(A,\otimes) \longrightarrow Q
$$

by the following. Temporarily define $Q \in Ob(U'-\mathfrak{Cat})$ so that $$
Ob(Q) = Ob(WE_{Assoc(sk)}(A,\otimes)),
$$ 

and for any $\bar{S} = (S,h_{S},\circ_{S}), \bar{T} = (T,h_{T},\circ_{T}) \in Ob(WE_{Assoc(sk)}(A,\otimes)$,
$$
Hom_{Q}(\bar{S},\bar{T}) = \{ [F]_{\sim(R)} \in \bold{2}^{Hom_{WE_{Assoc(sk)}(A,\otimes)}(\bar{S},\bar{T})} ; taut \},
$$

hold, where the equivalence relation $R$ is temporarily defined so that for any $\bar{S},\bar{T} \in Ob(WE_{Assoc(sk)}(A,\otimes))$, for any $F,G \in Hom_{WE_{Assoc(sk)}(A,\otimes)}(\bar{S},\bar{T})$, $F \sim_{R} G$ iff there exist
$$
\phi \in Hom_{A}(I,\bar{h}_{WE_{Assoc(sk)}(A,\otimes)}(\bar{S},\bar{T})(F,G))
$$
$$
\psi \in Hom_{A}(I,\bar{h}_{WE_{Assoc(sk)}(A,\otimes)}(\bar{S},\bar{T})(G,F))
$$

such that
$$
sk(\bar{\circ} \cdot (\psi \otimes id_{\bar{h}_{...}(F,G)}) \cdot \lambda_{A}(\bar{h}_{...}(F,G)) \cdot \bar{\circ} \cdot (\phi \otimes id_{\bar{h}_{...}(F,F)}) \cdot \lambda_{A}(\bar{h}_{...}(F,F)) = sk(id_{\bar{h}_{...}(F,F)})
$$

and
$$
sk(\bar{\circ} \cdot (\psi \otimes id_{\bar{h}_{...}(G,F)}) \cdot \lambda_{A}(\bar{h}_{...}(G,F)) \cdot \bar{\circ} \cdot (\phi \otimes id_{\bar{h}_{...}(G,G)}) \cdot \lambda_{A}(\bar{h}_{...}(G,G)) = sk(id_{\bar{h}_{...}(G,G)})
$$

are both true. We define $SK_{(A,\otimes,sk,\alpha,I,\lambda_{A},\rho_{A})}$ to be the quotient functor, so that
$$
SK_{(A,\otimes,sk,\alpha,I,\lambda_{A},\rho_{A})(0)} = id_{Ob(WE_{Assoc(sk)}(A,\otimes))}
$$

and
$$
SK_{(A,\otimes,sk,\alpha,I,\lambda_{A},\rho_{A})(1)} : F \mapsto [F]_{\sim(R)}.
$$

Generally, when the accompanying data are understood, we will informally write $\bar{sk} = SK_{(A,\otimes,sk,\alpha,I,\lambda_{A},\rho_{A})}$.

\se{Higher Categories and Constellations}

According to the literature (\cite{Lein},\cite{LurTop}) one desires that the higher categorical structures ought not to satisfy equalities, but equivalences of some sort. We use the ``skeleton functors" and $(sk)$-limits throughout to implement this. The first section defines $n$-categories, and the enriched set of $n$-categories, by the usual inductive intuition. The second section introduces a few book-keeping notions. The enriched set of $n$-categories is not associative in a satisfactory sense.

The third section introduces constellations, certain constructions of enriched sets, which are under certain conditions associative. We give, in the ``Lens" theorem, a formalism for the construction of arrows of $(WE_{(sk)}(A,\otimes),\times)$-enriched sets from individual constellations to the enriched set of $(A,\otimes)$-enriched sets.

\sus{n-Categories}

An n-Category is defined inductively as an object in the category of (n-1)-enriched categories. $n$-Categories with their basic structures are inductively defined, referring to each other (and therefore inseparable).

\sss{
The inductive  construction of $n$-categories
}
We define, inductively and simultaneously, the 
\ben
``forgetful functors" (``objects functors'') $F(n)$, 
\i
natural transformations $\rho (n)$, 
\i
the ``associators" $\alpha(n)$, 
\i
the ``product functors" $\times (n)$, 
\i
``symmetrizers" $\sigma (n)$, 
\i
``unit objects" $I(n)$, 
\i
right and left unit arrows $\rho_{u}(n)$, $\lambda_{u}(n)$, 
\i
$(n)-equivalence$ of
$(n)$-categories,
\i
$(n)$-equivalence of $(n)$-functors, 
\i
the $(n)$-skeleton functor $sk(n)$, and 
\i
the $U'$-category $U(n)-\mathfrak{Cat}$. 
\een
Here, for any $n\in\mathbb{N}$ the category of $(n)$-categories $U(n)-\mathfrak{Cat}$ 
is the category of sets that are weakly enriched over the category 
$U(n-1)-\mathfrak{Cat}$ of $(n-1)$-categories. 

\sss{$\bold{Definition}$ of n-Category}\label{nCat}
Assuming that we have defined these objects for all integers $\le n$ we define them for $n+1$.

\ref{nCat}.1.
The ``forgetful", or ``objects" functor
is defined on $n$-categories and it takes an $n$-category (an enriched set) to the underlying set 
$$
F(n+1) := Yo^{opp}_{((U,n+1)-\mathfrak{Cat}) (0)}(I(n+1)) : (U,n+1)-\mathfrak{Cat} \longrightarrow U-\mathfrak{Set}
$$

\ref{nCat}.2.
Define a natural transformation
$$
\rho(n) : \times_{U'-\mathfrak{Set}} \cdot (F(n) \times_{U'-\mathfrak{Cat}} F(n)) \rightarrow F(n) \cdot \times (n)
$$

by the identity maps. 

\ref{nCat}.3.
Define the $(n)$-associator
$$
\alpha (n+1) : \times(n+1) \cdot (\times(n+1) \times_{U'-\mathfrak{Cat}}  id_{U'-\mathfrak{Cat}} ) \rightarrow \times(n+1) \cdot (id_{(U,n+1)-\mathfrak{Cat}} \times_{U'-\mathfrak{Cat}} \times(n+1))
$$

as arrow of functors $((U,n+1)-\mathfrak{Cat})^{3} \longrightarrow (U,n+1)-\mathfrak{Cat}$, defined on objects by the associator and on hom objects by $\alpha (n)$. 

\ref{nCat}.4.
Define the $(n)$-product functor
$$
\times(n+1) : (U,n+1)-\mathfrak{Cat} \times_{U'-\mathfrak{Cat}} (U,n+1)-\mathfrak{Cat} \longrightarrow (U,n+1)-\mathfrak{Cat}
$$

on objects by the usual product functor (arrow in $U'-\mathfrak{Cat}$), defined on objects by the usual product functor and on hom objects by $\times (n)$, $\sigma (n)$, and $\alpha(n)$. 

\ref{nCat}.5.
Define the symmetrizing transformation
$$
\sigma (n+1) : \times(n+1) \rightarrow \times \sigma_{U'-\mathfrak{Cat}}
$$

on underlying objects by the usual symmetrizer and on hom objects by $\times(n)$ and $\sigma(n)$. 

\ref{nCat}.6.
Define the unit object
$$
I(n+1) \in Ob((U,n+1)-\mathfrak{Cat})
$$

having $\{ \emptyset \}$ as its underlying set, and $I(n)$ for the hom object. 

\ref{nCat}.7.
Define the left and right unit arrows
$$
\rho_{u}(n+1), \lambda_{u}(n+1) \in Arr(Hom_{U'-\mathfrak{Cat}^{2}}^{(1)}((U,n+1)-\mathfrak{Cat},(U,n+1)-\mathfrak{Cat}))
$$
$$
\rho_{u}(n+1) : - \times I(n+1) \rightarrow Id
$$
$$
\lambda_{u}(n+1) : I(n+1)\times - \rightarrow Id
$$

By the usual units on objects and $\rho_{u}(n)$ and $\lambda_{u}(n)$ on the hom objects. 

\ref{nCat}.8.
Define, for any $C,D \in Ob((U,n+1)-\mathfrak{Cat})$, the statement

$(C,D) \text{ are }(n+1)equivalent_{0} \Longleftrightarrow$

There exist $F : C \rightarrow D$, $G : D \rightarrow C$, such that for any $(c_{1},c_{2}) \in Ob(C)$, $(d_{1},d_{2}) \in Ob(D)$, $F_{(1)}(c_{1},c_{2})$ and $G_{(1)}(d_{1},d_{2})$ are $(n)$-equivalences, and  $(G\cdot F,id_{C}), (F\cdot G, id_{D})$ are $(n+1)$-equivalent$_{1}$.

\ref{nCat}.9.
Define, for any $C,D \in Ob((U,n+1)-\mathfrak{Cat})$, for any $F,G \in Hom_{(U,n+1)-\mathfrak{Cat})}(C,D)$, the statement 

$(F,G)$ are $(n+1)-equivalent_{1} \Longleftrightarrow$ 

There exist 
$$
\phi \in F(n)(\bar{h}_{WE((U,n)-\mathfrak{Cat},\times(n))(sk(n))}(C,D)(F,G))
$$
$$
\psi \in F(n)(\bar{h}_{WE((U,n)-\mathfrak{Cat},\times(n))(sk(n))}(C,D)(G,F))
$$, 

such that the various arrows
$$
\bar{h}_{WE((U,n)-\mathfrak{Cat},\times(n))(sk(n))}(C,D)(F,F) \rightarrow \bar{h}_{WE((U,n)-\mathfrak{Cat},\times(n))(sk(n))}(C,D)(F,G)
$$
$$
\bar{h}_{WE((U,n)-\mathfrak{Cat},\times(n))(sk(n))}(C,D)(G,G) \rightarrow \bar{h}_{WE((U,n)-\mathfrak{Cat},\times(n))(sk(n))}(C,D)(G,F)
$$

given by the composition of $\bar{\circ}$, the arrow $\bar{I(n)} \rightarrow \bar{h}_{WE((U,n)-\mathfrak{Cat},\times(n))(sk(n))}(C,D)(F,G)$ associated to $\phi$ or $\psi$ (see \ref{HomEnr} part(iv).) and a unit arrow ($\lambda_{u}(n)$ or $\rho_{u}(n)$), are $(n)$-equivalences$_{0}$ (Slightly loose usage. Adapt part (8).) (i.e. they $(sk(n))$-invert one another).

Roughly speaking there are $(sk(n))$-natural transformations between $F$ and $G$, which induce forward and backward compostion functors by the unit and enrichment lemma, which are $(n)$-equivalences, and such that $\phi \circ \psi$ and $\psi \circ \phi$ induce $(n)$-equivalent functors to the identities for the respective hom objects. 

\ref{nCat}.10.
Define the $(n)$-skeleton functor $sk(n)$ as a quotient functor
$$
sk(n) : (U,n)-\mathfrak{Cat} \longrightarrow Q 
$$

where $Q$ is the category defined by
$$
Ob(Q) = Ob((U,n)-\mathfrak{Cat})
$$
$$
Hom_{(Q)}(C,D) :_{t}= \{ [ F ]_{(n)eq} \in \bold{2}^{(Hom_{((U,n)-\mathfrak{Cat})}(C,D))} ; F \in Hom_{((U,n)-\mathfrak{Cat})}(C,D) \}
$$

where $[F]_{(n)eq} = [G]_{(n)eq}$ iff $(F,G)$ are $(n)$-equivalent. 

\ref{nCat}.11.
Define the category of $(n+1)$-categories
$$
(U,n+1)-\mathfrak{Cat} := WE_{Ass((U,n)-\mathfrak{Cat},\times(n)(sk(n),\alpha(n))}
$$

to be the category of sets $(sk(n))$-associatively enriched over over the category of $(n)$-categories

\sss{} Parts (ii) and (iii) of the following lemma give construction for limits and colimits in $WE(A,\otimes)$, to be applied to the (co)limits appearing in the construction of the enriched hom sets.

\sss{Lemma on Limits and Colimits in $WE(A,\otimes)$} \label{LemLimWE}
For any $(A,\otimes) \in Ob(\mathfrak{TCat})$,

\ref{LemLimWE}.1. $For : WE_{(sk)}(A,\otimes) \longrightarrow WE_{(term \circ sk)}(A,\otimes)$ is faithful, where $term :$ $codom(sk)$ $\longrightarrow$ $\star$ is the functor whose codomain is the terminal category. I.e. one forgets that one had had a composition requirement.

\ref{LemLimWE}.2. The limit of $F : I \longrightarrow WE(A,\otimes)$ can be constructed by the limit of the underlying sets and $(a_{i},b_{i})_{i\in I} \mapsto lim F'_{1}$, where $F_{1}' : I \longrightarrow A$ is defined on objects by
$$
F_{1(0)}' : j \mapsto h_{F_{(0)}(j)}(a_{j},b_{j}) )
$$

\ref{LemLimWE}.3. For any $F : I \longrightarrow WE_{(sk)}(A,\otimes)$, if $\tau : colim \circ \otimes \rightarrow \otimes \circ (colim \times _{\mathfrak{Cat}} colim )$ is an isomorphism where hom objects $h_{F_{(0)}(i)}(x,y)$ are concerned, then the colimit can be similarly constructed, by 
$$
([(a,i)],[(b,j)]) \mapsto colim F'_{1} \cdot cob\downarrow_{(Hom^{(1)}_{\mathfrak{Cat}^{2}}((\{ 1,2 \} ,...), I)} (ob_{(I)}(i) \cup ob_{(I)}(j),\Delta_{ (\{ 1,2 \})} )
$$

i.e. taking the colimit of all hom objects below both $i$ and $j$. Define composition by the arrow induced by tensoring the colimit arrows assigned to $([(a,i)],[(b,j)])$ and $([(b,j)],[(c,k)])$, composed with the inverse of $\tau$.

\begin{rem} 
The explicit description of limits and (co)limits is applied to verify in the following lemma the isomorphism required for part (ii) of \ref{HomEnr}.
\end{rem}

\begin{lem}
$\forall n \in \mathbb{N}$, $colim \cdot \times(n) \rightarrow \times(n) \cdot (colim \times _{\mathfrak{Cat}} colim )$ is an isomorphism.
\end{lem}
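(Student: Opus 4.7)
The plan is to proceed by induction on $n$, using the explicit colimit construction of Lemma \ref{LemLimWE}.3 together with Cartesian closedness of the base categories to push colimits through $\times(n)$.

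For the base case ($n = 0$ or $n = 1$), the functor $\times(n)$ is the ordinary Cartesian product in $\mathfrak{Set}$ or $\mathfrak{Cat}$. Both categories are Cartesian closed, so $- \times X$ has a right adjoint (the internal hom) and therefore preserves all colimits; the comparison arrow is thus an isomorphism.

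For the inductive step, assume the claim for $\times(n)$. By \ref{nCat}.11 the category $(U,n+1)-\mathfrak{Cat}$ equals $WE_{Ass((U,n)-\mathfrak{Cat},\times(n))(sk(n),\alpha(n))}$, and by \ref{nCat}.4 the product functor $\times(n+1)$ acts on underlying sets as the Cartesian product of sets and on hom objects as $\times(n)$. Lemma \ref{LemLimWE}.3 provides an analogous componentwise formula for colimits in this enriched category, but only under the hypothesis that $\tau$ is an isomorphism when the tensor is $\times(n)$; this hypothesis is exactly the inductive assumption. Consequently both sides of the comparison arrow for $\times(n+1)$ can be expressed componentwise: on underlying sets via set-level colimits and products, and on each hom object via colimits and $\times(n)$-products in $(U,n)-\mathfrak{Cat}$.

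Given these descriptions, the comparison arrow reduces componentwise to two already-known isomorphisms: on underlying sets to the $\mathfrak{Set}$-level isomorphism arising from Cartesian closedness of $\mathfrak{Set}$, and on each hom object to the isomorphism $colim_i (h_{C_i} \times(n) h_D) \cong (colim_i h_{C_i}) \times(n) h_D$ supplied by the inductive hypothesis. The main obstacle will be verifying that these componentwise isomorphisms assemble into an honest arrow of $(sk(n),\alpha(n))$-associative enriched sets: one must check that the composition laws match after transport through the universal colimit maps, so that the comparison is not merely a bijection of underlying sets together with a family of hom-object isomorphisms, but a morphism in $WE_{Ass(sk(n),\alpha(n))}((U,n)-\mathfrak{Cat},\times(n))$. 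This reduces, by naturality of $\tau$ (Lemma \ref{LemNatTau}) together with the $(sk(n),\alpha(n))$-associativity hypothesis, to a diagram chase tracking the $cob\!\downarrow$-indexing of Lemma \ref{LemLimWE}.3 through the tensor $\times(n)$ on both sides.
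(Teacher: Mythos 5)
Your proposal is correct and follows essentially the same route as the paper: induction on $n$, with the comparison arrow $\tau_{n+1}$ determined componentwise by the set-level isomorphism on underlying sets and by $\tau_n$ on hom objects, using the componentwise descriptions of products and colimits in $WE_{(sk)}(A,\otimes)$. The paper's proof is terser (it exhibits the set-level bijection $[(a_i,b_j)] \mapsto ([a_i],[b_j])$ directly rather than invoking Cartesian closedness, and does not spell out the compatibility-with-composition check you rightly flag), but the decomposition and the inductive mechanism are the same.
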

\begin{proof}
On the level of sets, this is the isomorphism given by $[(a_{i},b_{j})] \mapsto ([a_{i}],[b_{j}])$. By the previous lemma the product of enriched sets is given by taking the products of their hom objects, so that $\tau_{n+1} : colim \circ \times(n+1) \rightarrow \times(n+1) \circ (colim \times _{\mathfrak{Cat}} colim )$ is determined by $\tau_{0}$ on underlying set and $\tau_{n}$ on hom objects. By induction, $\tau_{n}$ is for any $n$ an isomorphism.
\end{proof}

The ``meaning" of the following theorem consists in the special cases of parts (iii) and (iv) of \ref{PushPull}.

\sss{$\bold{Theorem}$ on $(U,n)-\mathfrak{Cat}$}
The category $(U,n)-\mathfrak{Cat}$ is weakly enriched over itself. I.e. $((U,n+1)-\mathfrak{Cat},\bar{h}_{WE((U,n)-\mathfrak{Cat},\times (n))(sk(n)},\bar{\circ} (n) ) \in Ob(WE((U,n+1)-\mathfrak{Cat},\times (n+1))$. The hom set agrees with that given by applying the objects functor $Ob = F(n)$ to the hom $n$-category, i.e.  $Ob\circ\bar{Hom}_{(U,n)-\mathfrak{Cat}} \cong Hom_{(U,n)-\mathfrak{Cat}}$.

\begin{proof}
One must check that the constructions of \ref{HomEnr} (see part(ii)) and \ref{PushPull} can be applied at each step.

$sk(n)$-associativity is part of the definition of $(U,n)-\mathfrak{Cat}$. The isomorphism of the previous lemma is the only other requirement.
\end{proof}

\begin{rem}
The restriction of $WE((U,n)-\mathfrak{Cat},\times (n))$ to the subcategory of $(sk(n))$-associative enrichments is necessary for the construction of the hom set enrichment, which is necessary for the definition of the next skeleton functor, $sk(n+1)$).
\end{rem}

\begin{rem}
That $(U,n+1)-\mathfrak{Cat}$ as an enriched set is $sk(n+1)$-associative (and therefore properly an $(n+2)$-category) was expected, but not yet clear to me. By part (iv) of \ref{PushPull} it is associative with respect to the objects functor and $sk(n)\circ \bar{Arr}_{((U.n)-\mathfrak{Cat},\times (n))}$, i.e. it is $sk(n)$-associative with respect to each hom object ($n$-category).  The difficulty seems to be in inferring, from the arrows giving the equivalences within the hom objects, arrows giving equivalences from without. I suspect that this should be easier to do for particular types of $n$-categories.
\end{rem}

\begin{ex}
$(2)-\mathfrak{Cat} \in Ob(WE((2)-\mathfrak{Cat},\times(2)))$. The skeleton is used at the level of the hom objects, so that only the usual skeleton, $sk(1)$, is seen in this case. The objects are enriched sets.
$$
O = Ob((2)-\mathfrak{Cat}) = \{ \bar{C} = (C,h,\circ ) \}
$$

where the composition is $(sk)$-associative, where $sk = sk(1) : \mathfrak{Cat} \longrightarrow Q$ is the quotient functor determined by identifying isomorphic arrows (functors). The arrows are arrows of enriched sets
$$
\Phi = (\Phi_{0},\Phi_{1}) : ( C,h_{C} ,\circ_{C}) \rightarrow (D,h_{D},\circ_{D})
$$

respecting composition after the application of $(sk)$.

By the Hom-enrichment construction one associates to any $C,D \in Ob((2)-\mathfrak{Cat})$, $\Phi, \Psi \in Hom_{((2)-\mathfrak{Cat})}(C,D)$, the category $P_{\Phi,\Psi}$ of all arrows $( x \xrightarrow{f} \prod_{c \in Ob(C)} h_{D}(\Phi(c),\Psi(c))$ satisfying the $(sk)$-commutativity requirement. $p : P_{\Phi,\Psi} \longrightarrow \mathfrak{Cat}$ is the functor defined by $(( x, f ) \mapsto x)_{(x,f) \in Ob(P_{\Phi,\Psi})}$. By definition $\bar{h}_{2-\mathfrak{Cat}}(a,b)(\Phi,\Psi) := colim P_{\Phi,\Psi}$

The description of the enrichment on $(2)-\mathfrak{Cat}$ requires, for any $(C,D,E) \in O$, an arrow
$$
(\bar{h}_{2-\mathfrak{Cat}} (C,D) \times \bar{h}_{2-\mathfrak{Cat}}(D,E) \xrightarrow{\circ} \bar{h}_{2-\mathfrak{Cat}}(C,E)) \in Arr((2)-\mathfrak{Cat})
$$

representing composition. That the above is an arrow in $(2)-\mathfrak{Cat}$, interpreted, means that for any $\Phi_{1},\Phi_{2},\Phi_{3} \in Hom_{((2)-\mathfrak{Cat})}(C,D), \Psi_{1},\Psi_{2},\Psi_{3} \in Hom_{((2)-\mathfrak{Cat})}(D,E)$, 
$$
F \cong G \in Hom_{\mathfrak{Cat}}(
$$
$$
\bar{h}_{(2)-\mathfrak{Cat}}(C,D)(\Psi_{1},\Psi_{2}) \times \bar{h}_{(2)-\mathfrak{Cat}}(C,D)(\Psi_{2},\Psi_{3})) \times (\bar{h}_{2-\mathfrak{Cat}}(D,E)(\Phi_{1},\Phi_{2}) \times \bar{h}_{2-\mathfrak{Cat}}(\Phi_{2},\Phi_{3})),
$$
$$
\bar{h}_{2-\mathfrak{Cat}}(C,E)(\Phi_{1}\circ\Psi_{1},\Phi_{3}\circ\Psi_{3}))
$$

where
$$
F :_{t}= \bar{\circ}(C,E) \circ (\Phi_{1*} \times \Psi_{3}^{*}) \circ (\bar{\circ}(C,D) \times \bar{\circ}(D,E))
$$
$$
G :_{t}= \bar{\circ}(C,E) \circ (\bar{\circ}(C,E) \times \bar{\circ}(C,E)) \circ ((\Phi_{1*} \times \Psi_{2}^{*}) \times (\Phi_{2*} \times \Psi_{3}^{*})) \circ \sigma 
$$

where $\bar{\circ}(C,D)$ denotes the enriched composition in $\bar{h}_{2-\mathfrak{Cat}}(C,D)$. I.e., there is a function (arrow of sets) $\alpha : Ob(dom(F)) = Ob(dom(G)) \rightarrow Arr(\mathfrak{Cat})$ defining a natural isomorphism between the functors $F$ and $G$.

\end{ex}

\begin{pro}
If the $P$-colimit inclusion condition is satisfied for $(U,n)-\mathfrak{Cat}$, regarding the construction of the hom enrichment, then it is satisfied for $(U,n+1)-\mathfrak{Cat}$ as well. I.e., the two arrows $colim \ p \otimes e_{0} \rightarrow \prod_{c \in Ob(C)} h_{D}(\Phi(c),\Psi(c))$, one from right composition and the other from left composition, are $(n+1)$-equivalent.
\end{pro}
\begin{proof}
The forgetful functor is at each step given by the objects functor. In this case, $P$ is given by all arrows $(a\xrightarrow{\pi} \prod_{c \in Ob(C)} h_{D}(\Phi(c),\Psi(c))) \in Arr((U,n)-\mathfrak{Cat})$, such that for any arrow $(e_{0} \xrightarrow{e} h_{C}(x,y)) \in Arr((U,n)-\mathfrak{Cat})$ into a hom object in $C$, the two arrows (if $\otimes= \times(n)$) 
$$
r_{(a)},l_{(a)} :a \otimes e_{0} \rightarrow \bar{h}_{D}(\Phi(x),\Psi(y))
$$

one given by composition with $e_{0}$ on one side and the other by composition on the other, are $sk(n)$-equivalent. Therefore a choice of an $(n+1)$-equivalence of $(n+1)$-functors is still a choice of
$$
\phi \in  F(n)(\bar{h}_{WE((U,n)-\mathfrak{Cat},\times(n))(sk(n))}(r,l))
$$
$$
\psi \in  F(n)(\bar{h}_{WE((U,n)-\mathfrak{Cat},\times(n))(sk(n))}(l,r))
$$

where $\bar{h}_{WE((U,n)-\mathfrak{Cat},\times(n))(sk(n))}(r,l)$ is itself by construction a colimit of the domain object functor
$$
p = dob\downarrow_{((U,n)-\mathfrak{Cat})} (id_{(U,n)-\mathfrak{Cat}},ob_{((U,n)-\mathfrak{Cat})}(\prod_{x \in Ob(t_{0})} h_{D}(r_{(0)}(x),l_{(0)}(x)))) \circ \varepsilon : 
$$
$$
P \longrightarrow (U,n)-\mathfrak{Cat}.
$$

By the inclusion condition for the $n$ case the hom object assigned to $r$ and $l$ has a monic arrow into the product of hom objects $h_{D}(r_{(0)}(x),l_{(0)}(x))$. By the isomorphism of the previous lemma and the construction of the colimit in $WE_{(sk)}(A,\otimes)$ in the lemma before that, an arrow of functors $\phi \in Ob(\bar{h}_{WE((U,n)-\mathfrak{Cat},\times(n))(sk(n))}(l_{colim \ p},r_{colim \ p})))$ is a map of sets
$$
\phi : Ob(colim \ p \times (n) e_{0}) \cong Ob(colim \ p) \times Ob(e_{0}) = 
$$
$$
\{ (a, \pi) ; a \in dom(\pi) \text{ and } (dom(\pi),\pi) \in Ob(P) \} \times Ob(e_{0})
$$
$$
\rightarrow \bigcup Ob(h_{D}(l_{colim \ p} ,r_{colim \ p} ))
$$

Claim - That a choice argument implies the existence of a natural isomorphism $\phi$ from the natural isomorphism $\phi_{i}$.
\end{proof}

\sus{Addresses}


We introduce the notion of an address, which is sequence of hom objects, each nested within the previous by the n-categorical enrichment. It is essentially a book-keeping tool, meant to record the ``location of a $k$-arrow within an $n$-category."

\sss{$\bold{Definition}$ of the Empty $n$-Category}
$\emptyset_{U(1)-\mathfrak{Cat}} := (\emptyset , \emptyset, \emptyset, \emptyset, \emptyset ) \in Ob(U-\mathfrak{Cat}) = Ob(U(1)-\mathfrak{Cat})$ is the empty category, and $\forall n \in \mathbb{N}, \emptyset_{U(n+2)-\mathfrak{Cat}} := (\emptyset_{U(1)-\mathfrak{Cat}}, \emptyset, \emptyset) \in Ob(U(n+2)-\mathfrak{Cat})$ is the empty (n+2)-category.

\sss{$\bold{Definition}$ of Addresses}\label{DefAdd}

We define two address functions, one for objects in $(U,n)-\mathfrak{Cat}$ and one for arrows.

\ref{DefAdd}.1. For any $n \in \mathbb{N}$, $fAdd_{U(n)0} : Ob(U(n)-\mathfrak{Cat}) \rightarrow U'$ is defined to be the function which sends an $n$-category $x \in Ob(U(n))-\mathfrak{Cat})$ to the set of functions $\alpha : \{ 1,...,j \} \rightarrow U'$ such that for any $k \in \{ 1,...,j \}$, where $j \in \{0,...,n\}$, 
$$
\alpha (k) = ( a(k),b(k),C(k),h(k),\circ (k)
$$
$$
h(k)(a(k),b(k)) = (C(k+1),h(k+1),\circ (k+1) )
$$
$$
a(k),b(k) \in Ob(C(k))
$$
$$
x = (C(0),h(0),\circ(0))
$$
For any $n \in \mathbb{N}$, $Add_{U(n)0} : Ob(U(n)-\mathfrak{Cat}) \rightarrow U'$ is the function which sends an $n$-category $x$ as above to the set of functions $\alpha : \{ 0,...,j \} \rightarrow U'$ such that there exist $a,b,C,h,\circ$ for which $\alpha = (a(k),b(k))_{k \in \{ 0,...,j \}}$ and $(a(k),b(k),C(k),h(k),\circ (k) ) \in fAdd_{U(n)0}(x)$.

These assign to each $n$-category its set of ``(full) addresses," being sequences \newline $(a(i),b(i),\mathcal{C}(i),h(i),\circ (i))$ such that $(a(i+1),b(i+1))$ is a pair of objects in the base category $\mathcal{C}(i)$ of the $(n-i-1)$-category associated to the previous pair $(a(i),b(i))$ by the enrichment. $fAdd$ refers to the former list and $Add$ to the truncated latter.

The ``length," $|\alpha | = | (a,b)|$, will denote its order as a set.

\ref{DefAdd}.2. For any $n \in Ob(\mathbb{N})$, $Add_{U(n)1} : Arr(U(n)-\mathfrak{Cat}) \rightarrow U'$ is defined to be the function which sends $\phi \in Arr(U(n)-\mathfrak{Cat})$ to a function
$$
S : Add_{U(n)0}(dom(\phi)) \rightarrow \bigcup_{k \in \mathbb{N}} Arr(U(k)-\mathfrak{Cat})
$$
defined inductively, by requiring that
$$
S : \emptyset \mapsto \phi
$$
and that for any $(a,b) \in Add_{U(n)0}(dom(\phi))$, for any $\bar{\phi} \in Arr(U(n-|(a,b)|)-\mathfrak{Cat})$, $S(a,b) := \bar{\phi}$ iff there exists $(a_{0},b_{0}) \in Add_{U(n)0}(dom(\phi))$ such that
$$
|(a_{0},b_{0})|+1 = |(a,b)| \text{ and } (a,b)|_{0,...,|(a_{0},b_{0})|-1} = (a_{0},b_{0})
$$

and there exists $\psi = ((f_{0},f_{1}),f_{2}) \in Arr(U(n-|(a,b)|+1)-\mathfrak{Cat})$, such that 
$$
\psi = S(a_{0},b_{0}) \text{ and } f_{2}(a(|(a,b)|),b(|(a,b)|)) = \bar{\phi}
$$

This associates to every arrow of $n$-categories a function which sends an address for the domain category to the arrow of ($n-k$)-categories assigned to it by the original arrow.

\begin{rem}
That the above definition consists of two maps, one for $n$-categories and the other for arrows of $n$-categories, suggests some functor giving an alternate description of $n$-categories.
\end{rem}

\sss{$\bold{Definition}$ of the Functors $Inc^{U(m)-\mathfrak{Cat}}_{U(n)-\mathfrak{Cat}}$ and $For^{U(m)-\mathfrak{Cat}}_{U(n)-\mathfrak{Cat}}$} \label{DefIncFor}
For any $n,m \in \mathbb{N} \backslash \{ 0 \}$ such that $n < m$, define functors $Inc^{U(m)-\mathfrak{Cat}}_{U(n)-\mathfrak{Cat}} : U(n)-\mathfrak{Cat} \longrightarrow U(m)-\mathfrak{Cat}$ and $For^{U(m)-\mathfrak{Cat}}_{U(n)-\mathfrak{Cat}} : U(m)-\mathfrak{Cat} \longrightarrow U(n)-\mathfrak{Cat}$ inductively, by the following.

\ref{DefIncFor}.1. For any $x = (C,h,\circ) \in Ob(U(n+1)-\mathfrak{Cat})$,
$$
Inc_{0U(n+1)-\mathfrak{Cat}}(x) :_{t}= (C,Inc_{0U(n)-\mathfrak{Cat}} \circ h, Inc_{1U(n)-\mathfrak{Cat}} \circ \circ )
$$
and for any $\phi = (\phi_{0},\phi_{2}) \in Arr(U(n+1)-\mathfrak{Cat})$,
$$
Inc_{1U(n+1)-\mathfrak{Cat}}(\phi) :_{t}= (\phi_{0},Inc_{U(n)-\mathfrak{Cat}}(\phi_{2}))
$$
so that $Inc_{U(n+1)-\mathfrak{Cat}} := (Inc_{0U(n+1)-\mathfrak{Cat}},Inc_{1U(n+1)-\mathfrak{Cat}}) : U(n+1)-\mathfrak{Cat} \longrightarrow U(n+2)-\mathfrak{Cat}$.

Now temporarily define $Inc_{U(1)-\mathfrak{Cat}} : U-\mathfrak{Cat} \longrightarrow U(2)-\mathfrak{Cat}$ to be the functor which sends a category $C$ to the 2-category with enrichment $h_{C}(a,b) := (Hom_{C}(a,b), \{ id_{f} ; f \in Hom_{C}(a,b) \} ,... )$ given by attaching only identity arrows. Define
$$
Inc^{U(m+1)-\mathfrak{Cat}}_{U(n)-\mathfrak{Cat}} := Inc_{U(m)-\mathfrak{Cat}} \circ Inc^{U(m)-\mathfrak{Cat}}_{U(n)-\mathfrak{Cat}}, \text{ and }
$$
$$
Inc^{U(2)-\mathfrak{Cat}}_{U(1)-\mathfrak{Cat}} := Inc_{U(1)-\mathfrak{Cat}}
$$

\ref{DefIncFor}.2. Similarly, for any $x = (C,h,\circ ) \in Ob(U(m+1)-\mathfrak{Cat})$,
$$
For_{0}^{U(m+1)-\mathfrak{Cat}}(x) :_{t}= (C,For_{0}^{U(m)-\mathfrak{Cat}} \circ h ,For_{1}^{U(m)-\mathfrak{Cat}} \circ \circ )
$$

and for any $\phi = (\phi_{0},\phi_{2}) \in Arr(U(m+1)-\mathfrak{Cat})$,
$$
For^{U(m+1)-\mathfrak{Cat}}_{1}(\phi) :_{t}= (\phi_{0},For^{U(m)-\mathfrak{Cat}}_{1}(\phi_{2}))
$$

so that $For^{U(m+1)-\mathfrak{Cat}} :_{t}= (For^{U(m+1)-\mathfrak{Cat}}_{0},For^{U(m+1)-\mathfrak{Cat}})_{1}) : U(m+1)-\mathfrak{Cat} \longrightarrow U(m)-\mathfrak{Cat}$.

Now temporarily define $For^{U(2)-\mathfrak{Cat}} : U(2)-\mathfrak{Cat} \longrightarrow U-\mathfrak{Cat}$ to be the functor which forgets the enrichment. Define
$$
For^{U(m+1)-\mathfrak{Cat}}_{U(n)-\mathfrak{Cat}} := For^{U(m)-\mathfrak{Cat}}_{U(n)-\mathfrak{Cat}} \circ For^{U(m+1)-\mathfrak{Cat}} \text{ and }
$$
$$
For^{U(n)-\mathfrak{Cat}}_{U(n)-\mathfrak{Cat}} := id_{U(n)-\mathfrak{Cat}}
$$

\begin{lem}
$\forall n \in \mathbb{N}$, $U(n+1)-\mathfrak{Cat}$ has products and coproducts.
\end{lem}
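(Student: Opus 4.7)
The plan is to induct on $n$, invoking the previously stated lemmas on the existence of products and coproducts in $WE_{(sk)}(A,\otimes)$. For the base case $n=0$, the category $U(1)\text{-}\mathfrak{Cat} = U\text{-}\mathfrak{Cat}$ of ordinary categories is well-known to have products (componentwise) and coproducts (disjoint union). For the inductive step, suppose $U(n)\text{-}\mathfrak{Cat}$ has products and coproducts, together with an initial object given by the empty $n$-category $\emptyset_{U(n)\text{-}\mathfrak{Cat}}$ whose action $\lambda, \rho$ on $\times(n)$ is the canonical one (since $\times(n)$ restricts the Cartesian product on underlying sets at level zero, and the empty set absorbs products).

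For products in $U(n+1)\text{-}\mathfrak{Cat}$, apply the earlier lemma ``If $(A,\otimes)$ has products, then so does $WE_{(sk)}(A,\otimes)$'' to $(A,\otimes) = (U(n)\text{-}\mathfrak{Cat}, \times(n))$ with $sk = sk(n)$, which by the inductive hypothesis has products. Explicitly, given $\bar{S}_i = (S_i,h_i,\circ_i)$ for $i=1,2$, take underlying set $S_1 \times S_2$, hom objects $h((s_1,s_2),(t_1,t_2)) := h_1(s_1,t_1) \times(n) h_2(s_2,t_2)$, and compose componentwise via the symmetrizer $\sigma(n)$. One must then verify that this product lies in the full subcategory $WE_{Ass(sk(n),\alpha(n))}(U(n)\text{-}\mathfrak{Cat},\times(n)) = U(n+1)\text{-}\mathfrak{Cat}$, i.e.\ that $(sk(n))$-associativity is inherited; this is straightforward since the pentagon for the product decomposes, after applying $sk(n)$, into the pentagons of the factors, each of which commutes by hypothesis on the $\bar{S}_i$.

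For coproducts, apply the earlier lemma ``$WE_{(sk,\alpha,\lambda_A,\rho_A)}(A,\otimes)$ of $(A,\otimes)$-enriched sets with an initial object has coproducts'' to the same $(A,\otimes)$, using the empty $n$-category $\emptyset_{U(n)\text{-}\mathfrak{Cat}}$ as initial object with its canonical $(sk(n))$-action. Concretely, the coproduct of $\bar{S}_1$ and $\bar{S}_2$ has underlying set $S_1 \sqcup S_2$; its hom object agrees with $h_i$ on the $i$-th component and is $\emptyset_{U(n)\text{-}\mathfrak{Cat}}$ between elements lying in different components; composition is inherited from $\circ_i$ on each component and factors through the initial object (hence is uniquely determined) on mixed triples. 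The universal property follows because any pair of morphisms out of $\bar{S}_1, \bar{S}_2$ into a third enriched set extends uniquely: mixed-component hom data is forced by the initial object, and composition compatibility is automatic by the absorbing behaviour of $\emptyset_{U(n)\text{-}\mathfrak{Cat}}$ under $\times(n)$.

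The main obstacle is verifying $(sk(n))$-associativity for the coproduct: one must check the pentagon commutes after $sk(n)$ on the ``mixed'' quadruples $(a,b,c,d)$ drawn from both summands, where at least one hom object in the pentagon is the empty $n$-category. This reduces to the inductive hypothesis together with the observation, already noted in the excerpt, that $\times(n)$ with an initial factor produces an initial object, and any two parallel arrows out of an initial object are equal; hence both legs of the pentagon are identified even before applying $sk(n)$. Once this check is made at level $n+1$, the resulting coproduct and product structures, together with the empty $(n+1)$-category $\emptyset_{U(n+1)\text{-}\mathfrak{Cat}}$ from the earlier definition, provide the initial object needed to continue the induction at level $n+2$.
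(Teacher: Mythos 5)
Your proof is correct and follows essentially the same route as the paper's: induction on $n$, with products built componentwise on underlying sets and hom objects, and coproducts built as disjoint unions with the empty $n$-category $\emptyset_{U(n)-\mathfrak{Cat}}$ serving as the hom object between elements of different summands. The paper's own proof is only a two-line sketch of this construction, so your additional verifications (the $(sk(n))$-associativity of the pentagon for mixed tuples, and the universal properties forced by the initial object) are welcome elaborations rather than a different argument.
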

\begin{proof}
For products, by induction on $n$. At the base take the usual product category. For any tuple $(x_{i})_{i \in S}$, $(y_{i})_{i \in S}$, use the inductive step to take the product $\prod_{i \in S} h_{\mathcal{C}_{i}}(x_{i},y_{i})$.

For coproducts, at the base take the usual coproduct category (objects are the disjoint union. $Hom_{\coprod_{i \in S}}((a,j),(b,k))$ is $\empty$ for $j \neq k$, and $Hom_{\mathcal{C}_{j}}(a,b)$ for $j = k$). If $n \geq 1$, then for the enrichment, $h_{\coprod_{i \in S} \mathcal{C}_{i}}((a,j),(b,k))$  is $\emptyset_{U(n)-\mathfrak{Cat}}$ for $j \neq k$, and $h_{\mathcal{C}_{j}}(a,b)$ for $j = k$.
\end{proof}

\sss{$\bold{Definition}$ of Products and Coproducts}
$\prod _{U(n)-\mathfrak{Cat}}$ and $\coprod _{U(n)-\mathfrak{Cat}}$ will be functions \newline $\bigcup_{S \in U} Hom_{U'-\mathfrak{Set}}(S,Ob(U(n)-\mathfrak{Cat})) \longrightarrow Ob(U(n)-\mathfrak{Cat})$, the canonical constructions described in the previous lemma's proof.

\sss{$\bold{Definition}$ of the Restricted Simplicial Sets}
Define $\Delta \in Ob(U-\mathfrak{Set})$ to to be the simplicial category, i.e. its objects are finite ordered sets and its arrows are order-preserving functions.

For any $n \in Ob(U-\mathfrak{Set})$, define the category $\Delta_{(n)} := \Delta_{\backslash (\{ j \in \mathbb{N} ; j \leq n-1 \} , \leq_{\mathbb{N}} )} = \downarrow_{(\Delta)}(ob_{(\Delta)}((\{ j \in \mathbb{N} ; j \leq n-1 \} , \leq_{\mathbb{N}} )),id_{(U-\mathfrak{Cat})(\Delta)} )$. This is the arrow category under the set with $n$ elements.

\sss{$\bold{Definition}$ of Primitive Arrows}
$\forall n \in \mathbb{N}$, $\forall f \in Arr(\Delta)$, $f$ is primitive iff $||dom(f)|-|codom(f)|| = 1$. $\forall \phi = (f, e, id_{\circ}) \in Arr(\Delta_{(n)})$, $\phi$ is primitive iff $f$ is primitive.

\begin{lem}
Any arrow in $\Delta$ or $\Delta_{(n)}$ is a composition of primitive arrows.
\end{lem}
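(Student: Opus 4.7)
The plan is to reduce the statement for $\Delta_{(n)}$ to that for $\Delta$, and then to prove it for $\Delta$ by the standard epi-mono factorization combined with a double induction.

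For $\Delta$, given any order-preserving $f : X \to Y$, first factor $f = \iota \circ \pi$ where $\pi : X \twoheadrightarrow Z$ is surjective and $\iota : Z \hookrightarrow Y$ is injective, with $Z$ the image of $f$ equipped with the order inherited from $Y$ (this factorization is order-preserving because images of order-preserving maps inherit the linear order). It then suffices to decompose each of $\pi$ and $\iota$ into primitive pieces. For the surjective part, I will induct on $|X| - |Z|$: when $|X| > |Z|$, non-injectivity of $\pi$ forces some adjacent pair $i,i+1 \in X$ with $\pi(i)=\pi(i+1)$; collapsing this pair gives a primitive surjection $\sigma : X \to X'$ with $|X'| = |X| - 1$, through which $\pi$ factors as $\pi' \circ \sigma$, and one applies the inductive hypothesis to $\pi'$. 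Dually, for the injective part, I will induct on $|Y| - |Z|$: when $|Z| < |Y|$, some element $j \in Y$ lies outside the image of $\iota$; the primitive injection $\delta : Y' \to Y$ omitting $j$ gives a factorization $\iota = \delta \circ \iota'$ with $|Y'| = |Y|-1$, and induction handles $\iota'$.

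To transfer this to $\Delta_{(n)}$, recall that an arrow there is a triple $(f, e, \mathrm{id}_\circ)$ whose underlying arrow $f : X \to Y$ in $\Delta$ commutes with the structure maps $g_X : [n-1] \to X$, $g_Y : [n-1] \to Y$. Given a decomposition $f = f_r \circ \cdots \circ f_1$ in $\Delta$ with intermediate objects $X = X_0, X_1, \ldots, X_r = Y$, equip each $X_i$ with the structure map $g_i := f_i \circ \cdots \circ f_1 \circ g_X : [n-1] \to X_i$; then automatically $f_i \circ g_{i-1} = g_i$, so each $f_i$ lifts to an arrow in $\Delta_{(n)}$, primitive iff $f_i$ is primitive in $\Delta$.

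The main obstacle is really just the bookkeeping around the epi-mono factorization, specifically verifying that the collapse of an adjacent pair (respectively, the omission of a missing element) genuinely yields a map that is order-preserving and through which $\pi$ (respectively $\iota$) factors. Once these primitive pieces are identified, both inductions terminate when the size difference reaches zero, at which point the remaining map is an order-preserving bijection, hence the identity; the identity is the empty composition of primitive arrows, completing the argument.
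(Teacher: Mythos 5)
The paper states this lemma without any proof, so there is nothing to compare against line by line; what you have written is the standard argument that every morphism of the simplex category is generated by the elementary face and degeneracy maps, and it is essentially correct. The epi--mono factorization through the image, the adjacent-pair collapse for the surjective half, the omitted-element factorization for the injective half, and the lift to the coslice $\Delta_{(n)}$ by pushing the structure map $[n-1]\to X$ forward along the partial composites are all sound, and the last of these is exactly the right way to handle $\Delta_{(n)}$ given that its arrows are commuting triangles under a fixed object.

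One loose end to tighten: the paper's $\Delta$ is not skeletal --- its objects are arbitrary finite ordered sets --- so when your surjective induction terminates at $|X|=|Z|$ the remaining map $\pi\colon X\to Z=\mathrm{im}(f)\subseteq Y$ is the unique order-preserving bijection, which is the identity only when $X$ and $Z$ coincide as sets (e.g.\ when $f$ was not injective to begin with, or in the skeletal model). Since a primitive arrow is required to change cardinality by exactly $1$, such a bijection is not itself primitive and not an empty composite; but it is a composite of two primitives (embed $X$ order-preservingly into a set of cardinality $|X|+1$ and then collapse onto $Z$), so the lemma survives. Either add that one sentence, or arrange the factorization so that the surjective part lands literally in $X$ modulo the identifications (i.e.\ realize $Z$ as a quotient of $X$ rather than a subset of $Y$ and insert the unique order-isomorphism into the first injective factor). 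With that patch the proof is complete.
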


\sss{Lemma on a Pseudo-Simplicial Structure on $(U.n)-\mathfrak{Cat}$}\label{LemSimp}
For any $n \in \mathbb{N}$, there exists a unique 
$$
\rho \in Hom_{(U''-\mathfrak{Cat})}(\Delta_{(n)}, \downarrow_{(U'-\mathfrak{Cat})}(ob_{(U'-\mathfrak{Cat})}(U(n)-\mathfrak{Cat}),id_{U'-\mathfrak{Cat}}))
$$ 

such that for any $\phi = (f,id_{(\{1,...,n\}, \leq )}) \in Arr(\Delta_{(n)})$, $f$ is primitive implies the following.

\ref{LemSimp}.1. If $f$ injective, then $\rho_{(1)}(\phi) : U(|dom(f)|)-\mathfrak{Cat} \longrightarrow U(|codom(f)|)-\mathfrak{Cat}$ is defined on objects by
$$
\rho_{(1)}(\phi)_{(0)} : (C,h,\circ) \mapsto (D,\bar{h},\bar{\circ})
$$

iff
$$
For^{U(|codom(f)|)-\mathfrak{Cat}}_{U(|dom(f)|-1)-\mathfrak{Cat}}(D) = For^{U(|dom(f)|)-\mathfrak{Cat}}_{U(|dom(f)|-1)-\mathfrak{Cat}}(C)
$$

and for any full address $\alpha = (a,b,C_{\alpha},h_{\alpha},\circ_{\alpha}) \in fAdd_{U(|dom(f)|)}((C,h,\circ ))$, for any $k \in \{ 1,...,|dom(f)| \}$, $f(k+1) = f(k) + 2$ implies
$$
Ob(h_{\alpha}(k)(a(k),b(k))) = \{ \emptyset \} \text{ and }
$$
$$
\alpha \in fAdd_{U(|codom(f)|)}((D,\bar{h},\bar{\circ})) \text{ and }
$$
$$
\forall \bar{\alpha} = (\bar{a},\bar{b},C_{\bar{\alpha}},h_{\bar{\alpha}},\circ_{\bar{\alpha}} ) \in fAdd_{U(|codom(f)|)}((D,\bar{h},\bar{\circ})),
$$
$$
\ulcorner |\bar{\alpha}| = |\alpha |+1 \text{ and } \bar{\alpha}_{\{0,...,k\} } = \alpha \urcorner  \Longrightarrow h_{\bar{\alpha}}(|\bar{\alpha}|)(\emptyset ,\emptyset ) = h_{\alpha}(|\alpha|)(a(k),b(k))
$$

The functor $\rho_{1}(\phi)$ is defined on arrows by
$$
\rho_{1}(\phi)_{(1)} : F = ((F_{0},F_{1}),F_{2}) \mapsto ((G_{0},G_{1}),G_{2}) = G
$$

iff
$$
For^{U(|codom(f)|)-\mathfrak{Cat}}_{U(|dom(f)|-1)-\mathfrak{Cat}}(G) = For^{U(|dom(f)|)-\mathfrak{Cat}}_{U(|dom(f)|-1)-\mathfrak{Cat}}(F)
$$

and for any address $\alpha = (a,b) \in Add_{U(|codom(f)|)}(dom(G))$, for any $k \in \{ 1,...,|dom(f)| \}$,
$$
\ulcorner f(k+1) = f(k)+2 \text{ and } |\alpha | = k+1 \urcorner \Longrightarrow 
$$
$$
Add_{U(|codom(f)|)1}(G)(\alpha) = Add_{U(|dom(f)|)1}(F)(\alpha|_{ \{ 0,...,k \} } )
$$

\ref{LemSimp}.2. If $f$ is surjective, then $\rho_{1}(\phi) : U(|dom(f)|)-\mathfrak{Cat} \longrightarrow U(|codom(f)|)-\mathfrak{Cat}$ is defined on objects by
$$
\rho_{1}(\phi)_{(0)} :(C,h,\circ) \mapsto (D,\bar{h},\bar{\circ})
$$

iff
$$
For^{U(|dom(f)|)-\mathfrak{Cat}}_{U(|codom(f)|-1)-\mathfrak{Cat}}(C) = For^{U(|codom(f)|)-\mathfrak{Cat}}_{U(|codom(f)|-1)-\mathfrak{Cat}}(D)
$$

and for any full address $\alpha = (a,b,C_{\alpha},h_{\alpha},\circ_{\alpha}) \in fAdd_{U(|codom(f)|)0}(D)$, for any $k \in \mathbb{N}$, $f(k+1) = f(k)$ implies
$$
(C_{\alpha}(k), h_{\alpha}(k), \circ_{\alpha}(k) ) = \coprod_{\bar{\alpha} \in S } (C_{\bar{\alpha}}(k+1),h_{\bar{\alpha}}(k+1), \circ_{\bar{\alpha}}(k+1))
$$

where 
$$
S = \{ \bar{\alpha} = (\bar{a},\bar{b},C_{\bar{\alpha}},h_{\bar{\alpha}},\circ_{\bar{\alpha}} ) \in fAdd_{U(|dom(f)|)0}(C) ;
$$
$$
(\bar{a},\bar{b})_{\{0,...,k-1\}} = (a,b)_{ \{ 0,...,k-1\} } \text{ and } |\bar{\alpha}| = k+1 \}.
$$

The functor $\rho_{1}(\phi)$ is defined on arrows by
$$
\rho_{1}(\phi) : F = ((F_{0},F_{1}),F_{2}) \mapsto ((G_{0},G_{1}),G_{2}) = G
$$

iff
$$
For^{U(|dom(f)|)-\mathfrak{Cat}}_{U(|codom(f)|-1)-\mathfrak{Cat}}(F) = For^{U(|codom(f)|)-\mathfrak{Cat}}_{U(|codom(f)|-1)-\mathfrak{Cat}}(G)
$$

and for any full address $\alpha = (a,b,C_{\alpha},h_{\alpha},\circ_{\alpha}) \in fAdd_{U(|codom(f)|)0}(C)$, for any $k \in \mathbb{N}$, $f(k+1) = f(k)$ and $|\alpha | = k+1$ imply
$$
Add_{U(|codom(f)|)1}(G)(\alpha) = \coprod_{\bar{\alpha} \in S} Add_{U(|dom(f)|)1}(F)(\bar{\alpha})
$$

I.e. if $f$ is injective, delete the k-th step, replacing it with the coproduct of all n-k-1 categories appearing in the enriched homs there. If surjective, add a step, a base category with only one object, leaving its enriched hom as that which had preceded it.

\sss{Lemma on Representing the k-Arrows Functor}
Adopt the notation of (\ref{LemSimp}). Then for any arrow $f \in Arr(\Delta_{(n)})$ if the functor $R : (U,n)-\mathfrak{Cat} \longrightarrow (U,|f|)-\mathfrak{Cat}$ given by requiring that $\rho(f) = ( \cdot , R , (U,|codom(f)|)-\mathfrak{Cat})$, then the functor
$$
F_{(U,n)-\mathfrak{Cat}} \cdot \rho(f) : (U,n)-\mathfrak{Cat} \longrightarrow U'-\mathfrak{Set}
$$

is representable. 

\sss{Remark} I expect there to be some enriched version of this.

\begin{lem}
Adjunction of functors given to opposite pairs of primitive arrows by $\rho$.
\end{lem}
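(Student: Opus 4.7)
The plan is to interpret an \emph{opposite pair} of primitive arrows in $\Delta_{(n)}$ as a pair $f : X \hookrightarrow Y$ (injective primitive, so $|Y|=|X|+1$) and $g : Y \twoheadrightarrow X$ (surjective primitive) satisfying $g \cdot f = id_X$, and to show that $\rho(g) \dashv \rho(f)$, i.e.\ the ``collapse-by-coproduct'' functor is left adjoint to the ``insert-trivial-level'' functor. The heuristic is that in (\ref{LemSimp}.1) the injective case inserts at position $k$ a layer whose underlying category has the single object $\emptyset$ (with $\{\emptyset\}$-indexed hom enrichment reproducing the original), while in (\ref{LemSimp}.2) the surjective case replaces the $k$-th layer by the coproduct (in $U(|dom(f)|-k-1)-\mathfrak{Cat}$) of all $(n-k-1)$-categories appearing one layer below through the addresses. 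Since coproduct is left adjoint to the constant/diagonal functor, and the inserted trivial layer is precisely a diagonal prolongation, this is the natural adjunction candidate.

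First I would construct the unit $\eta : id \Rightarrow \rho(f) \cdot \rho(g)$. For $D = (C,h,\circ) \in U(|Y|)-\mathfrak{Cat}$, the composite $\rho(f)\cdot\rho(g)(D)$ agrees with $D$ above layer $k$ and has, at layer $k$, the singleton object set $\{\emptyset\}$ with hom-enrichment equal to $\coprod_{\bar\alpha \in S} h_{\bar\alpha}(k+1)(\bar a,\bar b)$; define $\eta_D$ to send every object at layer $k$ to $\emptyset$ and every hom object $h_{\bar\alpha}(k+1)(\bar a,\bar b)$ to the corresponding coproduct injection, extending by identities above and below via the inductive hypothesis. Dually the counit $\varepsilon : \rho(g)\cdot\rho(f) \Rightarrow id$ is seen to be an isomorphism, because inserting a trivial one-object layer and then collapsing it yields a coproduct indexed by the single address $\{\emptyset\}$, canonically identified with the original hom object.

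Next I would verify the triangle identities. Both reduce, address by address, to the fact that the coproduct injections followed by the fold into a single summand are the identity, and to the triviality of the identity-only enrichment produced by the injective primitive step; these verifications can be carried out by induction on $n$ using the notation from (\ref{DefAdd}) and the explicit constructions for $\prod_{U(n)-\mathfrak{Cat}}$ and $\coprod_{U(n)-\mathfrak{Cat}}$.

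The main obstacle will be checking that the unit $\eta_D$ actually defines an arrow in $U(|Y|)-\mathfrak{Cat}$, i.e.\ is compatible with composition at every layer both above and below the collapsed one. Above layer $k$ this is automatic since $\eta_D$ is the identity there; below it we must check $(sk(n))$-associativity of the composition against the coproduct injections, which follows from the functoriality of $\coprod$ established in the lemma just before (\ref{LemSimp}) together with the fact that $\rho(f)$ and $\rho(g)$ are, by their very definition, compatible with the forgetful functors $For^{U(m)-\mathfrak{Cat}}_{U(n)-\mathfrak{Cat}}$ of (\ref{DefIncFor}). Assembling these pieces inductively on $n$ and then composing adjunctions along factorizations into primitives (using the previous lemma that every arrow is a composition of primitives) yields the stated adjunction in full generality.
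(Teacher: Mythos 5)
The paper states this lemma without any proof at all, so there is no argument of the author's to compare yours against; your reading of ``opposite pairs'' as section--retraction pairs $g\cdot f=id$ of primitive arrows, and the unit/counit strategy with the counit $\rho(g)\cdot\rho(f)\Rightarrow id$ an isomorphism (the coproduct over the singleton address set), is the natural way to make the statement precise, and the direction $\rho(g)\dashv\rho(f)$ (collapse left adjoint to insert) is consistent with the hom-set description: a map into $\rho(f)(C)$ must factor all level-$k$ data through the unique object $\emptyset$, while a map out of $\rho(g)(D)$ is determined on coproduct summands.

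Two points remain genuinely open in your sketch. First, the unit $\eta_D$ is not obviously an arrow of $U(|Y|)-\mathfrak{Cat}$: at the collapsed level the composition $\circ_{\rho(g)(D)}$ must relate elements lying in \emph{distinct} summands of $\coprod_{\bar\alpha\in S}(\ldots)$ (the addresses $(a,b)$ and $(b,c)$ are different elements of $S$), whereas the coproduct constructed in the lemma preceding \ref{DefAdd}'s neighborhood assigns $\emptyset_{U(n)-\mathfrak{Cat}}$ to hom-objects between different summands; so the compatibility of $\eta_D$ with composition does not follow from ``functoriality of $\coprod$'' alone, and indeed it is not clear from \ref{LemSimp}.2 what the composition on $\rho(g)(D)$ is. You would need to spell this out before the triangle identities can even be posed. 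Second, arrows in $U(n)-\mathfrak{Cat}$ only respect composition up to $sk(n)$, so naturality of $\eta$ and the triangle identities should be expected to hold only up to $sk$-equivalence rather than strictly; your proposal asserts strict identities without addressing this, and the author's own remark immediately after (``I expect there to be some enriched version of this'') suggests the intended statement is the weaker one. The closing sentence about composing adjunctions along factorizations into primitives is unnecessary for the lemma as stated (it concerns only primitive pairs) and would in any case require the chosen sections of a general surjection to compose coherently, which you have not verified.
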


\sss{Conjecture on $(sk)$-associativity for a Subcategory of $n-Cat$}\label{assncat}

For any $n \in \mathbb{N}$, for any $\bar{B} = (B,h_{B},\bar{\circ}) \in Ob((WE(U,n)-\mathfrak{Cat},\times(n)))$, if there exists $C \in Ob(U-\mathfrak{Cat})$, and 
$$
(Add(\bar{B}) \xrightarrow{\Phi} Arr(U-\mathfrak{Cat})), (Add(\bar{B}) \xrightarrow{\varepsilon} Arr(U-\mathfrak{Cat}) \in Arr(U'-\mathfrak{Set})
$$

satisfying the following, properties, then $\bar{B}$ is $sk(n)$-associative.


\ref{assncat}.1. $C$ has colimits.

\ref{assncat}.2. For any address $\beta = (B_{i},h_{i},\bar{\circ}_{i},a_{i},b_{i})_{i \in \{ 1,...,k \} } \in Add(\bar{B})$, for some $c,d \in Ob(C)$, the functor
$$
\varepsilon(\beta) : E \longrightarrow C_{/c} = \downarrow_{(C)}(id_{(C)},ob_{(C)}(c))
$$

is faithful, and
$$
\Phi(\beta) : For^{(U,n-|\beta|)-\mathfrak{Cat}}_{U-\mathfrak{Cat}} (B_{|\beta|}) \longrightarrow Hom^{(1)}_{U-\mathfrak{Cat}^{2}}(E,C_{/d})
$$

is an equivalence of categories, where ``$|\beta|$" denotes the order of $\beta$ as a set of pairs, i.e. the number of categories or pairs of objects appearing in the sequence.

\ref{assncat}.3. The functors $\Phi(\beta)$ agree with the composition given by the Hom enrichment lemma, (\ref{PushPull}), up to natural isomorphism. Explanation follows.

\ref{assncat}.3.1. Let there be three addresses $\beta,\beta_{1},\beta_{2} \in Add(\bar{B})$, such that 
$$
|\beta_{1}| = |\beta_{2}| = |\beta_{3}| = |\beta|+1 \text{ and } \beta = \beta_{1} \cap \beta_{2} \cap \beta_{3}
$$
and 

$$
b_{1(|\beta|+1)} = a_{2(|\beta|+1)}
\text{ and }
a_{3(|\beta|+1)} = a_{1(|\beta|+1)}
\text{ and }
b_{3(|\beta|+1)} = b_{2(|\beta|+1)}
$$

i.e. the addresses $\beta_{1}$ and $\beta_{2}$ correspond to a triple $a_{1(k+1)},b_{1(k+1)}=a_{2(k+1)},b_{2(k+1)} \in Ob(C_{k})$ in the underlying category for one of the hom objects, composed to yield $\beta_{3}$.

\ref{assncat}.3.2. Then there is a natural isomorphism of functors
$$
\bar{\circ}_{\mathfrak{Cat}} \circ (\Phi(\beta_{1}) \times \Phi(\beta_{2})) \cong \Phi(\beta_{3})\circ For^{(U,n-|\beta|)-\mathfrak{Cat}}_{U-\mathfrak{Cat}}(\bar{\circ}),
$$

where $\bar{\circ}_{\mathfrak{Cat}}$ is that of (Enrichment, \ref{PushPull}) for $(U,2)-\mathfrak{Cat}$.








\sus{Constellations}

We describe, in this section, a method by which $WE_{(sk)}(A,\otimes)$-enriched sets can be constructed from associative $(A,\otimes)$-enriched sets. Given an assignment, to each pair of objects in $\bar{S} \in Ob(WE_{Assoc(sk)}(A,\otimes))$, of an $(A,\otimes)$-enriched set, we define the hom object attached to a particular pair to be the enriched set of $(A,\otimes)$-functors which send distinguished elements to the pair. The composition is the composition of a restriction functor with a Kan extension functor which would extend the domains of the two component arrows. Such an enriched set is called a constellation of $\bar{S}$. Under certain conditions, constellations are associative.

We also construct ``Lens functors," $(A,\otimes)$-arrows $T : L \rightarrow WE_{(\bar{sk})}(WE_{(sk)}(A,\otimes),\times)$, from sub-enriched sets $L \subseteq Stell(\bar{S},...)$ of constellations to the enriched set of enriched sets. If a constellation is associative, then the sub-enriched sets generated by such functors would inherit the associativity.

\sss{Lemma. Enriched Yoneda} Let $(A,\otimes)$ be a tensor category with an $(sk)$-associator $\alpha$ and $(sk)$-units $\lambda_{u}$ and $\rho_{u}$. If an object $a \in S$ admits an $(sk)$-identity, $I \rightarrow h_{S}(a,a)$, then any ``$(sk)$-natural transformation of functors" $h_{S}(-,a) \rightarrow h_{S}(-,b)$ is given by an associated arrow $I \rightarrow h_{S}(a,b)$. In detail, for any tensor category $(A,\otimes)$ with $(sk)$-associators and an $(sk)$-unit,
$$
\ulcorner \forall (A,\otimes) \in Ob( U-\mathfrak{TCat}), \ulcorner \forall (sk : A \longrightarrow B) \in Arr(U-\mathfrak{Cat}),
$$
$$
\ulcorner \forall I \in Ob(A), \ulcorner \forall \alpha,\lambda_{u},\rho_{u}, \ulcorner\ulcorner (I,\lambda_{u},\rho_{u}) \text{ is an } (\otimes,sk)-unit \urcorner \Longrightarrow
$$

for any enriched set $(S,h_{S},\circ_{S})$, for any two objects, $a,b \in S$,
$$
\ulcorner \forall (S,h_{S},\circ_{S}) \in Ob(WE_{(sk)}(A,\otimes), \ulcorner \forall a,b \in S,
$$

if $\Phi$ is a map of sets assigning to each $c \in S$ an arrow $h_{S}(c,a) \rightarrow h_{S}(c,b)$ in $A$, satisfying a certain naturality condition, and if the arrow $id_{A} : I \rightarrow h_{S}(a,a)$ is an $(sk)$-identity of the object $A$ (in the sense that $sk(\circ(a,a,x) \cdot (id_{h_{S}(x,a)} \otimes id_{a}) \rho_{u}) = sk(id_{h_{S}(x,a)})$),
$$
\ulcorner \forall \Phi \in Hom_{(U'-\mathfrak{Set})}(S,Arr(A)), \ulcorner \forall id_{a} \in Hom_{A}(I,h_{S}(a,a)),
$$
$$
\ulcorner\ulcorner\ulcorner \forall c \in S, \ulcorner \Phi(c) \in Hom_{(A)}(h_{S}(c,a),h_{S}(c,b)) \urcorner\urcorner \text{ and }
$$
$$
\ulcorner \forall c,d \in S, \ulcorner sk(\circ_{S}(d,c,b) \cdot (id_{h_{S}(d,c)} \otimes \Phi(c)) = sk(\Phi(d) \cdot \circ_{S}(d,c,a)) \urcorner\urcorner \text{ and }
$$
$$
\ulcorner id_{a} \text{ is a } (\circ_{S},\rho_{u},sk)-identity_{WE}\urcorner\urcorner \Longrightarrow
$$

then for any $c \in S$, $\Phi(c)$ is indicated by $\Phi(a)$ and the identity.
$$
\ulcorner \forall c \in S, \ulcorner sk(\Phi(c)) = sk(\circ_{S}(c,a,b) \cdot (id_{h_{S}(c,a)} \otimes (\Phi(a) \cdot id_{a})) \cdot \rho_{u}(h_{S}(c,a))) \urcorner\urcorner\urcorner\urcorner\urcorner\urcorner\urcorner\urcorner\urcorner\urcorner\urcorner\urcorner
$$

\sss{$\bold{Definition}$ of Enriched Adjoints}\label{EnrichAdj} The first part defines one-sided adjoints. The second side requires an associator, $\alpha$, and defines two-sided adjoints. \newline

\ref{EnrichAdj}.1. We define, for a tensor category $(A,\otimes)$ an $ladjunction(F)$,
$$
\ulcorner \forall (A,\otimes) \in Ob(U-\mathfrak{TCat}), \ulcorner \forall (sk : A \longrightarrow B ) \in Arr(U-\mathfrak{Cat}),
$$
$$
\ulcorner \forall F,G \in Arr(WE_{(sk)}(A,\otimes)), \ulcorner\ulcorner\ulcorner dom(F) = codom(G)\urcorner \text{ and } \ulcorner dom(G) = codom(F) \urcorner\urcorner \Longrightarrow
$$

to be a map $\Phi$, from pairs of objects in the opposing domains to $Arr(A)$,
$$
\ulcorner \forall \Phi \in Hom_{U-\mathfrak{Set}}(Ob_{(A,\otimes)}(dom(F)) \times Ob_{(A,\otimes)}(dom(G)),Arr(A)),
$$
$$
\ulcorner\ulcorner \Phi \text{ is an ladjunction}(F,G)\urcorner \Longleftrightarrow
$$

which satisfies a naturality condition,
$$
\ulcorner \forall a,b \in Ob_{(A,\otimes)}(dom(F)), \ulcorner \forall c \in Ob_{(A,\otimes)}(dom(G)), \ulcorner\ulcorner \Phi(a,b) \text{ is an isomorphism} \urcorner
$$

$
\text{ and }$ $\ulcorner sk(\circ_{dom(G)}(F(a),F(b),c) \cdot (F_{(1)}(a,b) \otimes id_{h_{dom(G)}(F(b),c)}) \cdot (id_{h_{dom(F)}(a,b)} \otimes \Phi(b,c))) = sk(\Phi(a,c)\cdot \circ_{dom(F)}(a,b,G(c))) \urcorner \text{ and }$

$
\ulcorner sk(\Phi(a,c)^{-1} \cdot \circ_{dom(G)}(F(a),F(b),c) \cdot (F_{(1)}(a,b) \otimes id_{h_{dom(G)}(F(b),c)})) = sk (\circ_{dom(F)}(a,b,G(c)) \cdot (id_{h_{dom(F)}(a,b)} \otimes \Phi(b,c)^{-1})) \urcorner\urcorner\urcorner\urcorner\urcorner \text{ and } \ulcorner\ulcorner \Phi \text{ is an radjunction}(F,G)\urcorner \Longleftrightarrow$

$
\text{(symmetric condition, respecting composition on the other side)} \urcorner\urcorner\urcorner\urcorner\urcorner\urcorner\urcorner$ 

with a symmetric definition for an $radjunction(F,G)$ on the other side.

\ref{EnrichAdj}.2. For a tensor category $(A,\otimes)$ with an $(sk)$ associator $\alpha$,
$$
\ulcorner \forall (A,\otimes) \in Ob(U-\mathfrak{ATCat}), \ulcorner \alpha \in U, \ulcorner \forall (sk : A \longrightarrow B) \in Arr(U-\mathfrak{Cat}),
$$
$$
\ulcorner \forall F,G \in Arr(WE_{(sk)}(A,\otimes)), \ulcorner\ulcorner\ulcorner \alpha \text{ is an (sk)-associator}(A,\otimes) \urcorner
$$
$$
\text{ and } \ulcorner dom(F) = codom(G) \urcorner \text{ and } \ulcorner codom(F) = dom(G) \urcorner\urcorner \Longrightarrow
$$

we similarly define an $adjunction(F,G)$ to be a map of sets $\Phi$,
$$
\ulcorner \forall \Phi \in Hom_{U-\mathfrak{Set}}(Ob(dom(F)) \times Ob(dom(G)),Arr(A)),
$$
$$
\ulcorner\ulcorner \Phi \text{ is an adjunction}(F,G)\urcorner \Longleftrightarrow
$$
$$
\ulcorner \forall (a,b) \in dom(\Phi), \ulcorner\ulcorner \Phi(a,b,) \text{ is an isomorphism}\urcorner \text{ and } \ulcorner \forall (a',b') \in dom(\Phi),
$$
$$
\ulcorner\ulcorner sk(\Phi(a',b') \cdot \circ_{dom(F)}(a',a,G(b')) \cdot (id_{h_{dom(F)}(a',a)} \otimes \circ_{dom(F)}(a,G(b),G(b'))) \cdot
$$
$$
(id_{h_{dom(F)}(a',a)} \otimes (id_{h_{dom(F)}(a,G(b))} \otimes G_{(1)}(b,b')))) =
$$
$$
sk(\Phi(a',b') \cdot \circ_{dom(F)}(a',G(b),G(b')) \cdot ( \circ_{dom(F)}(a',a,G(b)) \otimes G_{(1)}(b,b')) \cdot
$$
$$
 \alpha (h_{dom(F)}(a',a),h_{dom(F)}(a,G(b)),h_{dom(G)}(b,b'))^{-1}) = 
$$
$$
sk(\circ_{dom(G)}(F(a'),F(a),b') \cdot (F_{(1)}(a',a) \otimes \Phi(a,b')) \cdot (id_{h_{dom(F)}(a',a)} \otimes \circ_{dom(F)}(a,G(b),G(b))) \cdot
$$
$$
(id_{h_{dom(F)}(a',a)} \otimes (id_{h_{dom(F)}(a,G(b))} \otimes G_{(1)}(b,b')))) =
$$
$$
sk(\circ_{dom(G)}(F(a'),b,b')) \cdot (\Phi(a',b) \otimes id_{h_{dom(G)}(b,b')}) \cdot
$$
$$
( \circ_{dom(F)}(a',a,G(b))\otimes id_{h_{dom(G)}(b,b')}) \cdot \alpha(h_{dom(F)}(a',a),h_{dom(F)}(a,G(b)),h_{dom(G)}(b,b'))^{-1}) =
$$
$$
sk( \circ_{dom(G)}(F(a'),F(a),b') \cdot (F_{(1)}(a',a) \otimes \circ_{dom(G)}(F(a),b,b')) \cdot
$$
$$
(id_{h_{dom(F)}(a',a)} \otimes (\Phi(a,b) \otimes id_{h_{dom(G)}(b,b')}))) = 
$$
$$
sk(\circ_{dom(G)}(F(a'),b,b') \cdot (\circ_{dom(G)}(F(a'),F(a),b) \otimes id_{h_{dom(G)}(b,b')}) \cdot
$$
$$
((F_{(1)}(a',a) \otimes \Phi(a,b)) \otimes id_{h_{dom(G)}(b,b')}) \cdot \alpha (h_{dom(F)}(a',a),h_{dom(F)}(a,G(b)),h_{dom(G)}(b,b'))^{-1}) \urcorner
$$
$$
\text{ and } (\text{An analogous set of equalities for } \Phi^{-1}) \urcorner\urcorner\urcorner\urcorner\urcorner\urcorner\urcorner\urcorner\urcorner\urcorner\urcorner
$$

\sss{$\bold{Definition}$ of Enriched Kan Extensions}\label{EnrichKan}. The first part defines functorial Kan extensions, as adjoints to restriction ``functors" $i^{*}$, where a restriction functor is understood to be as in the the lemma on composition functors (\ref{PushPull}). The second part defines pointwise Kan extensions.

\ref{EnrichKan}.1. For a tensor category with skeleton and associator,
$$
\ulcorner\forall (A,\otimes) \in Ob(U-\mathfrak{TCat}),\ulcorner \forall \alpha \in U, \ulcorner \forall (sk : A \longrightarrow B ) \in Arr(U-\mathfrak{Cat}),
$$

for any arrows $i,i^{*}$ of enriched sets for which $i^{*}$ is the ``restriction" of $i$,
$$
\ulcorner \forall i,i^{*} \in Arr(WE_{Assoc(sk)}(A,\otimes)), \ulcorner \forall S \in Ob(WE_{Assoc(sk)}(A,\otimes)),
$$
$$
\ulcorner \forall K \in Hom_{WE_{Assoc(sk)}(A,\otimes)}(
$$
$(Hom_{WE_{Assoc(sk)}(A,\otimes)})(dom(i),S),\bar{h}_{(A,\otimes)}(dom(i),S),\bar{\circ}_{(A,\otimes)}(dom(i),S)),$
$$
(Hom_{WE_{Assoc(sk)}(A,\otimes)}(codom(i),S),\bar{h}_{(A,\otimes)}(codom(i),S),\bar{\circ}_{(A,\otimes)}(codom(i),S))), 
$$
$$
(\text{That } i^{*} : \bar{Hom}_{WE_{Assoc(sk)}(A,\otimes)}(codom(i),S) \rightarrow \bar{Hom}_{WE_{Assoc(sk)}(A,\otimes)}, dom(i),S) \text{ is to } i \text{ as is }
$$
$$
F^{*} \text{ to } F \text{ in } (\ref{PushPull})  )
$$

any pair $(K,\Phi)$ of arrows of sets are said to be left or right Kan extensions (Lan or Ran) accordingly as they serve as adjunctions to the restriction arrow $i^{*}$.
$$
\Longrightarrow \ulcorner \forall \Phi \in Arr(U'-\mathfrak{Set}),
$$
$$
\ulcorner\ulcorner\ulcorner \Phi \text{ is an adjunction}(K,i^{*}) \urcorner \Longleftrightarrow \ulcorner (K,\Phi) \text{ is a } Lan_{(A,\otimes)(sk)}(i)\urcorner\urcorner \text{ and }
$$
$$
\ulcorner\ulcorner (K,\Phi) \text{ is a Ran}_{(A,\otimes)(sk)}(i) \urcorner \Longleftrightarrow \ulcorner \Phi \text{ is an adjunction}(i^{*},K)\urcorner\urcorner\urcorner\urcorner\urcorner\urcorner\urcorner\urcorner\urcorner\urcorner
$$

\ref{EnrichKan}.2. For any tensor category $(A,\otimes)$, for any $F \in Hom_{(WE_{Assoc(sk)}(A,\otimes))}(dom(i),$\newline $codom(F))$, $\bar{F} \in Hom_{WE_{Assoc(sk)}(A,\otimes)}(codom(i),codom(F)), \Phi \in Arr(U'-\mathfrak{Set})$, we say that $(\bar{F},\Phi) \text{ is a }$ $Lan_{(A,\otimes)(sk)}(i,F)$ if it satisfies above requirements, involving a single object on the right side. We say that $(\bar(F),\Phi) \text{ is a Ran}_{(A,\otimes)(sk)}(i,F)$ in the analogous case.

\sss{$\bold{Definition}$ of a Constellation} Suppose that $WE_{(sk)}(A,\otimes)$ has cofibres. Given, (i) an enriched set $\bar{S} = (S,h_{S},\circ_{S})$, and (ii) maps of sets $e_{1},e_{2},e_{3},i_{1},i_{2}$, so that (ii.i) the $e$-maps assign to any triple $r,s,t \in S$ a triple of $(A,\otimes)$-arrows $e_{i}(r,s,t)$ for which all three the same codomain , and (ii.ii) the $i$-maps assign to each $s,t \in S$ distinguished objects in the enriched sets $dom(e_{2}(r,s,t)) = dom(e_{1}(s,t,r)) = dom(e_{3}(s,r,t)$ for arbitrary $r \in S$ (requiring this we implicitly assign to each pair $(s,t)$ this enriched set $dom(e_{2}(r,s,t))$), we define an enriched set on $S$, over $WE_{Assoc(sk)}(A,\otimes)$ by associating to each $s,t \in S$ the enriched set of $(A,\otimes)$-functors $dom(e_{3}(s,r,t) \rightarrow \bar{S}$ which send distinguished elements of the domain to $a$ and $b$. Composition is given by Kan extensions.

In detail, for any tensor category $(A,\otimes)$ with skeleton functor $sk$ and associator $\alpha$, for any $(A,\otimes)$-enriched set $\bar{S}$,
$$
\ulcorner \forall (A,\otimes), \in Ob(U-\mathfrak{ATCat}),\ulcorner \forall \alpha : Ob(A)^{3} \rightarrow Arr(A), \ulcorner \forall (sk : A \longrightarrow B) \in Arr(U-\mathfrak{Cat}),
$$
$$
\ulcorner \forall \bar{S} = (S,h_{S},\circ_{S}),
$$

for any arrows of sets $e_{1},e_{2},e_{3}$ (with their domains and codomains marked by $\bar{I}$ and $\bar{J}$), which assign to tuples of $S$ arrows of $WE_{Assoc(sk)}(A,\otimes)$, and $i_{1},i_{2}$, which distinguish elements within the enriched sets $\bar{I}$, (in which case we say that ``$e_{1},e_{2},e_{3},i_{1},i_{2}$ are constellation data for $\bar{S}$")
$$
\ulcorner\forall \bar{I} = (I,h_{I},\circ_{I}) : S \times S \rightarrow Ob(WE_{Assoc(sk)}(A,\otimes)),
$$
$$
\ulcorner \forall \bar{J} = (J,h_{J},\circ_{J}) : S \times S \times S \rightarrow Ob(WE_{Assoc(sk)}(A,\otimes)), 
$$
$$
\ulcorner \forall e_{1},e_{2},e_{3} : S \times S \times S \rightarrow Arr(WE_{Assoc(sk)}(A,\otimes)),
\ulcorner \forall i_{1},i_{2} : S \times S \rightarrow U', 
$$
$$
\ulcorner\ulcorner \forall a,b,c \in S, \ulcorner\ulcorner dom(e_{2}(a,c,b)) = dom(e_{1}(b,a,c)) = dom(e_{3}(a,b,c) = \bar{I}(a,c) \urcorner
$$
$$
\text{ and } \ulcorner codom(e_{1}(a,b,c)) = codom(e_{2}(a,b,c)) = codom(e_{3}(a,b,c)) = \bar{J}(a,b,c) \urcorner
$$
$$
\text{ and } \ulcorner e_{1}(a,b,c)(i_{1}(a,b)) = e_{3}(a,b,c)(i_{1}(a,c)) \urcorner
$$
$$
\text{ and } \ulcorner e_{1}(a,b,c)(i_{2}(a,b)) = e_{2}(a,b,c)(i_{1}(b,c)) \urcorner
$$
$$
\text{ and } \ulcorner e_{2}(a,b,c)(i_{2}(b,c)) = e_{3}(a,b,c)(i_{2}(a,c))\urcorner\urcorner\urcorner
$$

we say that any enriched set $(S,\bar{h},\bar{\circ})$ is a $constellation(\bar{S},e_{1},e_{2},e_{3},i_{1},i_{2})$ iff

$
\Longrightarrow \ulcorner \forall \bar{h} \in Hom_{U-\mathfrak{Set}}(S \times_{U-\mathfrak{Set}} S, Ob(A)), \ulcorner \forall \bar{\circ} \in Hom_{U-\mathfrak{Set}}(S \times_{U-\mathfrak{Set}} S \times_{\mathfrak{Set}} S , Arr(A)),
$

$$
\ulcorner\ulcorner\ulcorner (S,\bar{h},\bar{\circ}) \text{ is an }l-constellation_{(A,\otimes,\alpha)(sk)}(\bar{S},e_{1},e_{2},e_{3},i_{1},i_{2}) \urcorner \Longleftrightarrow
$$

its hom objects are sets of $(A,\otimes)$-arrows respecting the distinguished objects,
$$
\ulcorner\ulcorner \forall a,b \in S, \ulcorner \bar{h}(a,b) = ( h_{0}(a,b) :_{t}= 
$$
$$
\{ \Sigma  \in Hom_{WE_{Assoc(sk)}(A,\otimes)}(\bar{I}(a,b),\bar{S}) ; \ulcorner\ulcorner \Sigma(i_{1}(a,b)) = a \urcorner \text{ and } \ulcorner \Sigma(i_{2}(a,b)) = b \urcorner\urcorner \}, 
$$
$$
\bar{h}_{\bar{h}_{WE_{Assoc(sk)}(A,\otimes)}(\bar{I}(a,b),\bar{S})} \cdot \varepsilon_{h_{0}(a,b)\times h_{0}(a,b)}^{Hom...\times Hom...}, \bar{\circ}_{\bar{h}_{WE_{Assoc(sk)}(A,\otimes)}(\bar{I}(a,b),bar{S})} \cdot \varepsilon_{h_{0}(a,b)\times h_{0}(a,b) \times h_{0}(a,b)}^{Hom...\times Hom... \times Hom...}) \urcorner\urcorner \text{ and }
$$

and the composition arrow $\bar{\circ}(a,b,c)$ is given by the restriction along $e_{3}(a,b,c)$ of a Kan extensions function. The arrow \newline $e_{3*} : \bar{h}_{WE_{Assoc(sk)}(A,\otimes)}(\bar{J}(a,b,c),\bar{S})$ $\rightarrow \bar{h}_{WE_{Assoc(sk)}(A,\otimes)}(\bar{I}(a,c),\bar{S})$ below is as in the \newline $Hom_{WE_{Assoc(sk)}(A,\otimes)}$ enrichment lemma.
$$
\ulcorner \forall a,b,c \in S, 
$$
$$
\ulcorner \exists K \in Hom_{WE_{Assoc(sk)}(A,\otimes)}(
$$
$$
\bar{h}_{WE_{Assoc(sk)}(A,\otimes)}(\bar{I}(a,b)\sqcup \bar{I}(b,c), \bar{S}),\bar{h}_{WE_{Assoc(sk)}(A,\otimes)}(\bar{J}(a,b,c),S)),
$$
$$
\ulcorner \exists \Phi \in Arr(U'-\mathfrak{Set}), 
$$
$$
\ulcorner\ulcorner (K,\Phi) \text{ is a } Lan_{(A,\otimes)(sk)}(\sqcup_{(\bar{I}(a,b),\bar{I}(b,c))(\bar{J}(a,b,c))(0)}(e_{1}(a,b,c),e_{2}(a,b,c))) \urcorner \text{ and }
$$
$$
\ulcorner \bar{\circ}(a,b,c) = e_{3*} \cdot K \cdot \sqcup_{(\bar{I}(a,b),\bar{I}(b,c))(\bar{J}(a,b,c))} \cdot \varepsilon_{h_{0}(a,b)\times h_{0}(a,b)}^{...\times ...} \urcorner\urcorner\urcorner\urcorner\urcorner\urcorner\urcorner
$$

and $\ulcorner\ulcorner (S,\bar{h},\bar{\circ}) \text{ is an r-constellation}_{(A,\otimes,\alpha)(sk)}(\bar{S},e_{1},e_{2},e_{3},i_{1},i_{2}) \urcorner \Longleftrightarrow$ (Analogous, but with Ran)$\urcorner\urcorner\urcorner\urcorner\urcorner\urcorner\urcorner\urcorner\urcorner\urcorner\urcorner\urcorner\urcorner\urcorner$

\sss{Lemma} If $A$ has products and an initial object $\emptyset_{A} \in Ob(A)$, then for any $S_{1},S_{2},S_{3} \in Ob(WE_{Assoc(sk)}(A,\otimes))$, the coproduct map 
$$
\sqcup_{0(S_{1},S_{2},S_{3}} : Hom_{WE_{(sk)}(A,\otimes)}(S_{1},S_{3}) \times Hom_{WE_{(sk)}(A,\otimes)}(S_{2},S_{3}) \rightarrow Hom_{WE_{(sk)}(A,\otimes)}(S_{1} \sqcup S_{2}, S_{3})
$$ 

associated to $S_{1} \sqcup S_{2}$ ``extends" to an arrow 
$$
(\sqcup_{(S_{1},S_{2})(S_{3})} : \bar{h}_{WE_{Assoc(sk)}(A,\otimes)}(S_{1},S_{3}) \times_{WE_{Assoc(sk)}(A,\otimes)} \bar{h}_{WE_{Assoc(sk)}(A,\otimes)}(S_{2},S_{3}) \rightarrow
$$
$$
\bar{h}_{WE_{(sk)}(A,\otimes )}(S_{1} \sqcup S_{2} , S_{3} )) \in Arr(WE_{Assoc(sk)}(A,\otimes))
$$

i.e. there exists an arrow $\sqcup_{(S_{1},S_{2})(S_{3})}$ such that the object functor sends it to the underlying map on sets, $Ob_{(sk,A,\otimes)}(\sqcup_{(S_{1},S_{2})(S_{3})})) = \sqcup_{0(S_{1},S_{2})(S_{3})}$.

\sss{Lemma} If $(A,\otimes)$ has an $(sk)$-unit, $I$, then any two Kan extensions are $(sk)$-equivalent, in the sense of (\ref{SkelQuot}).

\sss{Lemma}  For any $f,g \in Arr(WE_{Assoc(sk)}(A,\otimes)), \bar{sk}(f_{*} \cdot g_{*}) = \bar{sk}((g\cdot f)_{*})$, where $f_{*}\cdot g_{*}, (g\cdot f)_{*} : \bar{h}_{WE_{Assoc(sk)}(A,\otimes)}(codom(g),x) \rightarrow \bar{h}_{WE_{Assoc(sk)}(A,\otimes)}(dom(f),x)$ are as in the enrichment lemma.

\sss{Proposition}\label{AssocStell} Suppose that the tensor category $(A,\otimes)$ with $(sk)$-associator $\alpha$ has an $(sk)$-unit $I$ and $\bar{sk} : WE_{Assoc(sk)}(A,\otimes) \longrightarrow Q$ is the quotient functor given by identifying such $(A,\otimes)$-functors as should admit pairs of equivalences between them, given by elements of the sets given by applying $Yo^{opp}_{(A)}(I)$ to the hom objects in $\bar{h}_{WE_{Assoc(sk)}(A,\otimes)}(c,d)$, as in (\ref{SkelQuot}). Then for any $\bar{S},e_{1},e_{2},e_{3},i_{1},i_{2}$, etc. as in the previous definition, $(i) \Longrightarrow (ii)$.

(i). For any $a,b,c,d \in S$, for any diagram $(\varepsilon : D \longrightarrow WE_{Assoc(sk)}(A,\otimes))  \in Arr(U'-\mathfrak{Cat})$ in $WE_{Assoc(sk)}(A,\otimes)$ which includes a subcategory $D$ of the form
$$
(\{ I(a,b) \sqcup I(b,c) \sqcup I(c,d), I(a,b) \sqcup J(b,c,d), J(a,b,c) \sqcup I(c,d), I(a,b) \sqcup I(b,d), I(a,c) \sqcup I(c,d),
$$
$$
 J(a,b,d), J(a,c,d), I(a,d)  \},
$$
$$
\{ id_{I(a,b)} \sqcup ( e_{1}(b,c,d) \sqcup e_{2}(b,c,d)), id_{I(a,b)} \sqcup e_{3}(b,c,d), e_{1}(a,b,d) \sqcup e_{2}(a,b,d) , e_{3}(a,b,d),\alpha_{\sqcup},
$$
$$
( e_{1}(a,b,c) \sqcup e_{2}(a,b,c)) \sqcup id_{I(c,d)} ,  e_{3}(a,b,c) \sqcup id_{I(c,d)}, e_{1}(a,c,d) \sqcup e_{2}(a,c,d) , e_{3}(a,c,d),
$$
$$
id_{...},id_{...},... \}, Hom_{D}, ... )
$$

for any colimit $(L_{D},\lambda_{D})$ of $D$, for any arrows of enriched sets
$$
K_{\lambda_{D}(I(a,b) \sqcup J(b,c,d))} : \bar{h}_{WE_{Assoc(sk)}(A,\otimes)}(I(a,b) \sqcup J(b,c,d),\bar{S}) \rightarrow \bar{h}_{WE_{Assoc(sk)}(A,\otimes)}(L_{D},\bar{S}) 
$$
$$
K_{e_{1}(a,b,d) \sqcup e_{2}(a,b,d))} : \bar{h}_{WE_{Assoc(sk)}(A,\otimes)}(I(a,b) \sqcup I(b,d),\bar{S}) \rightarrow \bar{h}_{WE_{Assoc(sk)}(A,\otimes)}(J(a,b,d),\bar{S}) 
$$
$$
K_{\lambda_{D}(J(a,b,c) \sqcup I(c,d))} : \bar{h}_{WE_{Assoc(sk)}(A,\otimes)}(J(a,b,c) \sqcup I(c,d),\bar{S}) \rightarrow \bar{h}_{WE_{Assoc(sk)}(A,\otimes)}(L_{D},\bar{S}) 
$$
$$
K_{e_{1}(a,c,d) \sqcup e_{2}(a,c,d))} : \bar{h}_{WE_{Assoc(sk)}(A,\otimes)}(I(a,c) \sqcup I(c,d),\bar{S}) \rightarrow \bar{h}_{WE_{Assoc(sk)}(A,\otimes)}(J(a,c,d),\bar{S}) 
$$

with functions $\Phi_{\lambda_{D}(I(a,b) \sqcup J(b,c,d))} ,\Phi_{e_{1}(a,b,d) \sqcup e_{2}(a,b,d))}, \Phi_{\lambda_{D}(J(a,b,c) \sqcup I(c,d))}, \Phi_{e_{1}(a,c,d) \sqcup e_{2}(a,c,d))} \in$ \newline $Arr(U'-\mathfrak{Set})$, such that each of the pairs 
$$
(K_{\lambda_{D}(I(a,b) \sqcup J(b,c,d))}, \Phi_{\lambda_{D}(I(a,b) \sqcup J(b,c,d))}), (K_{e_{1}(a,b,d) \sqcup e_{2}(a,b,d))}, \Phi_{e_{1}(a,b,d) \sqcup e_{2}(a,b,d)}),
$$
$$
(K_{\lambda_{D}(J(a,b,c) \sqcup I(c,d))}, \Phi_{\lambda_{D}(J(a,b,c) \sqcup I(c,d))} ) , (K_{e_{1}(a,c,d) \sqcup e_{2}(a,c,d))}, \Phi_{e_{1}(a,c,d) \sqcup e_{2}(a,c,d))})
$$ 

is a Lan respectively of the arrows subscribed in its components, we have that
$$
\bar{sk}(\lambda_{D}(J(a,b,d))_{*} \cdot K_{\lambda_{D}(I(a,b) \sqcup J(b,c,d))}) = \bar{sk}(K_{e_{1}(a,b,d) \sqcup e_{2}(a,b,d))} \cdot (id_{I(a,b)} \sqcup e_{3}(b,c,d))_{*})
$$

and
$$
\bar{sk}(\lambda_{D}(J(a,c,d))_{*} \cdot K_{\lambda_{D}(J(a,b,c) \sqcup I(c,d))}) = \bar{sk}(K_{e_{1}(a,c,d) \sqcup e_{2}(a,c,d))} \cdot ( e_{3}(a,b,c) \sqcup id_{I(c,d)})_{*}).
$$

(ii). The enriched set $(S,\bar{h},\bar{\circ}) \in Ob(WE_{(\bar{sk})}(WE_{Assoc(sk)}(A,\otimes),\times_{WE_{Assoc(sk)}(A,\otimes)}))$ is $(\bar{sk})$-associative.

\sss{Remark} The condition (i) of the previous proposition expresses the notion that one can take both Kan extensions, necessary for the composition $\bar{h}(a,b)\times \bar{h}(b,c) \times \bar{h}(c,d) \rightarrow \bar{h}(a,d)$, at the same time. The composition, when done in either either order, should therefor result in a restriction of a functor whose domain is an $(A,\otimes)$-set glued together from the $I$ and $J$ sets.


\sss{Example} $n-\mathfrak{Cat}$. The Yoneda functor $Yo^{opp}_{(n-\mathfrak{Cat})}(I(n))$ of the unit for the product $\times(n)$ differs from the objects functor (given by $Yo^{opp}_{(n-\mathfrak{Cat})}(I_{noenrich})$, where $I_{noenrich}$, has one object, enriched by the empty category (even without a unit). Unless the category of $n$-functors is restricted to those preserving the identity, recording idempotent higher arrows.

\sss{Remark} Given an arrow $(F : \bar{S} \rightarrow \bar{T})$ of enriched sets, one might define an arrow $(\bar{F} : Stell(\bar{S},e,...) \rightarrow Stell(\bar{T},e,...)) \in Arr(WE_{(\bar{sk})}(WE_{Assoc(sk)}(A,\otimes),\times_{WE_{Assoc(sk)}(A,\otimes)}))$, between the left constellations (the data $e_{j},i_{k}$ being the same), if the ``pushforward" $F_{*} : \bar{h}(\bar{I},\bar{S}) \rightarrow \bar{h}(\bar{I},\bar{T})$ commutes with the Kan extensions used in the compositions.

\sss{Proposition}\label{FunctSys} For any $s : S \times S \rightarrow Arr(WE_{Assoc(sk)}(A,\otimes))$, $t_{12},t_{3} : S \times S \times S \rightarrow Arr(WE_{Assoc(sk)}(A,\otimes))$, 

\ref{FunctSys}.1. For any two sets of constellation data for $\bar{S}$, $(e_{1},e_{2},e_{3},i_{e1},i_{e2}),(f_{1},f_{2},f_{3},i_{f1},i_{f2})$, $\ulcorner(i) \Longrightarrow (ii)\urcorner \text{ and } (iii)$.

(i). For any functions $s',t_{12}',t_{3}',e'_{12},e'_{3},f'_{12},f'_{3} : S \times S \times S \rightarrow Arr(U-\mathfrak{Cat})$, for any $a,b \in S$, $s'(a,b) \text{ is a Lan}(s(a,b))$, 

and for any $a,b,c \in S$, 

$t'_{12}(a,b,c) \text{is a Lan}(t_{12}(a,b,c))$ and $t'_{3}(a,b,c) \text{is a Lan}(t_{3}(a,b,c))$ and

$e'_{12}(a,b,c) \text{ is a Lan}(e_{12})$ (for some coproduct arrow $e_{12} = e_{1}(a,b,c) \sqcup e_{2}(a,b,c)$) and \newline $e'_{3} \text{ is a Lan}(e_{3}(a,b,c))$ and 
$$
e_{3*}(a,b,c) : \bar{Hom}_{WE_{Assoc(sk)}(A,\otimes)}(codom(e_{3}(a,b,c)),S) \rightarrow
$$
$$
\bar{Hom}_{WE_{Assoc(sk)}(A,\otimes)}, dom(e_{3}(a,b,c)),S)
$$

is the pullback for $\bar{h}$, and

$f'_{12}(a,b,c) \text{ is a Lan}(f_{12})$ (for some coproduct arrow $f_{12} = f_{1}(a,b,c) \sqcup f_{2}(a,b,c)$) and \newline $f'_{3}(a,b,c) \text{ is a Lan}(f_{3}(a,b,c))$ and 
$$
f_{3*}(a,b,c) : \bar{Hom}_{WE_{Assoc(sk)}(A,\otimes)}(codom(f_{3}(a,b,c)),S) \rightarrow
$$
$$
\bar{Hom}_{WE_{Assoc(sk)}(A,\otimes)}, dom(f_{3}(a,b,c)),S)
$$

is the pullback for $\bar{h}$, imply that for any $a,b,c \in S$
$$
\bar{sk}(\bar{h}(e'_{12}(a,b,c),f_{3*}(a,b,c) \cdot t'_{12}(a,b,c))(e'_{3}(a,b,c)\cdot e_{3*}(a,b,c)))
$$
$$
=
$$
$$
\bar{sk}(\bar{h}(e'_{12}(a,b,c),f_{3*}(a,b,c) \cdot t'_{12}(a,b,c))(id))
$$
$$
and
$$
$$
\bar{sk}(\bar{h}(t'_{3}(a,b,c) \cdot e_{3*}(a,b,c) \cdot e'_{12}(a,b,c),id)(f_{3*}(a,b,c) \cdot f'_{3}(a,b,c)))
$$
$$
=
$$
$$
\bar{sk}(\bar{h}(t'_{3}(a,b,c) \cdot e_{3*}(a,b,c) \cdot e'_{12}(a,b,c),id)(id))
$$

(ii). The pair $F = (id_{S},s') : \bar{S} \rightarrow \bar{T}$ is an arrow in \newline $WE_{\bar{sk}}(WE_{Assoc(sk)}(A,\otimes),\times_{WE_{Assoc(sk)}(A,\otimes)})$, where $\bar{S}$ is the constellation formed of the $e_{1,2},e_{3}$ data and ... .

(iii). For any tuples $(s_{1},t_{112},t_{13},s'_{1},t'_{112},t'_{13})$ and $(s_{2},t_{212},t_{23},s'_{2},t'_{212},t'_{23})$, for any tuple $(s'_{3},t'_{312},t'_{33})$ such that the first two tuples and the tuple $(s_{2} \cdot s_{1}, t_{212}\cdot t_{112}, t_{23}\cdot t_{13}, s'_{3},t'_{313},t'_{33}) $ all satisfy condition (i) in the place of $(s,t_{12},t_{3},s',t'_{12},t'_{3})$,
$$
\bar{sk}(s'_{2} \cdot s'_{1}) = \bar{sk}(s'_{3}).
$$

\ref{FunctSys}.2. Dual, using Ran.


\sss{Lemma} For any $J \in Ob(A)$, for any arrow of functors $\rho$ for which $(Yo^{opp}(J),\rho) : (A,\otimes) \rightarrow (\mathfrak{Set},\times_{\mathfrak{Set}})$ is an arrow of tensor categories, for any objects $a,b \in Ob(S) = Ob(\bar{S})$,
$$
h_{WE(Ob,\rho_{0}) \cdot Stell_{(Bar_{(\lambda_{A},\rho_{A})}(J))} (\bar{S})}(a,b) \cong h_{WE(Yo^{opp}(J),\rho)(\bar{S})}(a,b)
$$

i.e. the enriched sets $WE(Ob,\rho_{0}) \cdot Stell_{(Bar_{(\lambda_{A},\rho_{A})}(J))} (\bar{S})$ and $WE(Yo^{opp}(J),\rho)(\bar{S})$ have isomorphic hom objects.

\sss{Remark}\label{remNewEnrich} Any $(A,\otimes)$-enriched set, $\bar{S} = (S,...)$ can be reconsidered in several ways as a $(WE_{Assoc(sk)}(A,\otimes),\times_{WE_{Assoc(sk)}(A,\otimes)})$-enriched set, with such data $(s,t_{3})$ as above (assignments of functors between composition data) allowing for their comparison. This might be inductively applied to induce an $((n,A)-\mathfrak{Cat},\times)$-enrichment with the same underlying $(\mathfrak{Set},\times)$-enrichment as the original enriched set $\bar{S}$.

\sss{Example} Trivial. Assign to $x,y \in Ob(C)$ the diagram $(x_{0} \rightarrow y_{0})$ . The composition is given by gluing two such diagrams into a diagram 
$$
((x_{0} \rightarrow y_{0}) \sqcup (y_{0} \rightarrow z_{0})) \hookrightarrow (x_{0} \rightarrow y_{0} \rightarrow z_{0}) \hookleftarrow (x_{0} \rightarrow z_{0})
$$

so that the first arrow is composed of the $e_{1},e_{2}$ arrows and the third is the $e_{3}$ arrow.

\sss{Example} ``Localization." Assign to $x,y \in Ob(C)$ the diagram with objects $x_{0}$,$a$, and $y_{0}$, and arrows $(x_{0} \rightarrow a)$, $(a \rightarrow x_{0})$, $(a \rightarrow y_{0})$ and $(x_{0} \rightarrow y_{0})$, with both arrows from $x_{0}$ to $y_{0}$ equal. The composition is given by gluing the individual diagrams at the middle object, and adding an object $c$, ``above both $a$ above $x_{0} \rightarrow y_{0}$ and $b$ above $y_{0} \rightarrow z_{0}$."

One might, in applying (\ref{FunctSys}), send $(x_{00} \rightarrow y_{00})$ as in the first example to the arrow $(x_{0} \rightarrow y_{0})$.

\sss{Example}\label{ExSysFib} Fibre functors, their adjoints. Given an admissibility structure $\varepsilon : E \hookrightarrow Fib$ on a category $\mathcal{T}$ (see \cite{Wi}), assign to $x,y \in Ob(\mathcal{T})$ the sub-category of diagrams which is the disjoint union of the arrow category over $y$ with itself, with an additional arrow $(u,1) \rightarrow (u,2)$ for each object $u \in Ob(E(y))$. The distinguished objects are $y_{0} = (id_{y},1)$ and $x_{0} = (id_{y},2)$.

\sss{Remark} One obtains $(k+1)$-arrows from $k$-arrows by appending a tuple of arrows of $C$, for the natural transformations.


\sss{$\bold{Theorem}$ (``Lens")}\label{Lens} For any arrows of enriched sets $u_{d},u_{c} : \bar{I} = (I,...) \rightarrow \bar{J} = (J,...)$ with two distinguished objects $i'_{1},i'_{2} \in I$, for any sub-enriched set 
$$
\lambda : \bar{L} = (L,h_{L},\circ_{L}) \hookrightarrow Stell(\bar{S},...)) \in Arr(WE_{(\bar{sk})}(WE_{Assoc(sk)}(A,\otimes)
$$

for any functions $c : L \rightarrow \bold{2}^{L},l_{d},l_{c} :  F \cdot \bar{Arr}(\bar{L}) \rightarrow Arr(WE_{Assoc(sk)}(A,\otimes)$ such that for any $k,l,m \in Ob(L)$, for any $\phi \in Ob(h_{L}(l,m))$, $\psi \in Ob(h_{L}(k,l))$, the following, (i) and (ii), hold,

(i). $dom(l_{d}(\phi)) = dom(l_{c}(\psi)) = \langle c(m) \rangle_{WEFull(\bar{L})} \subseteq \bar{L}$, and $codom(l_{d}(\phi)) = dom(e_{2}(k,l,m))$ and $codom(l_{c}(\psi)) = dom(e_{1}(k,l,m))$.

(ii). $\bar{sk}(\phi \cdot l_{d}(\phi)) = \bar{sk}(\psi \cdot l_{c}(\psi)$.

we construct arrows
$$
T : \bar{L} \rightarrow \bar{WE}({WE_{Assoc(sk)}(A,\otimes)})) \in Arr(WE_{(\bar{\bar{sk}})}(WE_{Assoc(sk)}(A,\otimes),\times)
$$
$$
T^{o} : \bar{L}^{opp} \rightarrow \bar{WE}({WE_{Assoc(sk)}(A,\otimes)})) \in Arr(WE_{(\bar{sk})}(WE_{Assoc(sk)}(A,\otimes),\times)
$$

by the following (\ref{Lens}.1), (\ref{Lens}.2), and (\ref{Lens}.3). For any l-constellation $Stell(\bar{S},...)$,

\ref{Lens}.1. For any $s \in L \subseteq S = Ob(Stell(\bar{S},...))$, the arrow
$$
u_{(s)} : T_{0}(s) := 
$$
$$
\langle \{ x \in Ob(\bar{h}_{(a,\otimes,sk)}(\bar{I},\bar{S}) ; \exists \phi \in F \circ \bar{Arr}(\bar{L}), \exists \tilde{x} \in Arr(WE_{Assoc(sk)}(A,\otimes)), 
$$
$$
\ulcorner\ulcorner sk(\phi \cdot l_{d}(\phi) \cdot \tilde{x}) = sk(x) \urcorner \text{ and } \ulcorner x_{(0)}(i'_{1}) = i_{0} \urcorner\urcorner \} \rangle 
$$
$$
\hookrightarrow \bar{h}_{(A,\otimes,sk)}(\bar{I},\bar{S})
$$ 

is the inclusion of the sub-enriched set generated by $(A,\otimes)$-functors $\bar{sk}$ factoring through some arrow $\phi \in F \circ \bar{Arr}(\bar{L})$, which agree on distinguished elements, the (domain and codomain enriched sets).

\ref{Lens}.2. For any $s,t \in L$, for any $\phi \in Ob(h_{L}(s,t))$, let 
$$
H_{(s,t)} :_{t}= 
$$
$$
\langle \{ x \in Ob(\bar{h}_{(a,\otimes,sk)}(\bar{J},dom(e_{1}(s,t,t))) ; \forall \phi \in Ob(h_{L}(s,t)),
$$
$$
\ulcorner\ulcorner  u_{d}^{*}(\phi \cdot x) \in T_{0}(dom(\phi)) \urcorner \text{ and } \ulcorner u_{c}^{*}(\phi \cdot x) \in T_{0}(codom(\phi)) \urcorner\urcorner \} \rangle
$$
$$
\hookrightarrow \bar{h}_{(A,\otimes)(sk)}(\bar{J},\bar{S})
$$

be the inclusion of the sub-enriched set generated by $(A,\otimes)$-functors \newline $x : \bar{J} \rightarrow dom(e_{1}(s,t,t))$ for which the composition $\phi \cdot x$ for any arrow $\phi \in Ob(h_{L}(s,t))$ restricts by both arrows $u_{d},u_{c} : \bar{I} \rightarrow \bar{J}$ to objects of the enriched sets $T_{0}(s)$ and $T_{0}(t)$ respectively.

$$
u_{d*} : \bar{h}_{(A,\otimes)(sk)}(\bar{J},\bar{S}) \rightarrow T_{0}(dom(\phi)) \subseteq \bar{h}_{(A,\otimes,sk)}(\bar{I},\bar{S})) \in Arr(WE_{Assoc(sk)}(A,\otimes)
$$
$$
u_{c*} : \bar{h}_{(A,\otimes)(sk)}(\bar{J},\bar{S}) \rightarrow T_{0}(codom(\phi)) \subseteq  \bar{h}_{(A,\otimes,sk)}(\bar{I},\bar{S})) \in Arr(WE_{Assoc(sk)}(A,\otimes)
$$

be the restrictions. Suppose that
$$
u_{d(s,t)}' : T_{0}(dom(\phi)) \rightarrow H_{(s,t)}
$$

is a left (right for r-constellations) adjoint of $u_{d*}$. Then 
$$
T_{10}(\phi) := u_{c(s,t)}^{*} \cdot \phi_{*} \cdot u_{d(s,t)}' \cdot l_{d(s,t)*} : T_{0}(dom_{L}(\phi)) \rightarrow T_{0}(codom_{L}(\phi))
$$

is the element of the set $Hom_{WE_{sk}(A,\otimes)}(T_{(0)}(dom(\phi)),T_{(0)}(codom(\phi)))$ assigned to $\phi$.

\ref{Lens}.3. For any $s,t \in S$, for any $\phi,\psi \in Ob(h_{L}(s,t))$ consider the projections
$$
\prod_{i \in Ob(dom(\phi))} h_{S}(\phi_{(0)}(i),\psi_{(0)}(i)) \rightarrow \prod_{f \in Ob(T_{0}(s))} \prod_{i \in Ob(dom(f))} h_{S}(T_{10}(\phi)_{(0)}(f)(i),T_{10}(\psi)_{(0)}(f)(i)
$$

from the full tuple of hom-objects (a sub-object of which is the object of $A$ specified in the enrichment lemma) to those which appear in the codomain components, i.e. between the images of objects $i$ under the functors $T(\phi)(f)$ and $T(\psi)(f)$.  Composition of this projection with the maps $x \rightarrow \prod_{i \in Ob(dom(\phi))} h_{S}(\phi_{(0)}(i),\psi_{(0)}(i))$ associated to each object in the colimit diagram induces arrows $x \rightarrow \prod_{i \in Ob(dom(f))} h_{S}(T_{10}(\phi)_{(0)}(f)(i),T_{10}(\psi)_{(0)}(f)(i)$ compatible with the composition requirement, and therefore of the latter colimit diagram, and provided with an arrow into the latter colimit. The colimit map for the enrichment object between $\phi$ and $\psi$ induces an arrow,
$$
\bar{h}(dom(e(s,t)),\bar{S})(\phi,\psi) \rightarrow \prod_{f \in Ob(T_{(0)}(s))} \bar{h}(\bar{I},\bar{S})(T_{10}(\phi)_{(0)}(f),T_{10}(\psi)_{(0)}(f)),
$$

which similarly induces an arrow, which we define to be
$$
T_{11}(s,t)(\phi,\psi) : \bar{h}(dom(e(s,t)),\bar{S})(\phi,\psi) \rightarrow \bar{h}(T_{0}(s),T_{0}(t))(T_{10}(\phi),(T_{10}(\psi)).
$$

\ref{Lens}.4. If the condition on arrows $\tau$ of (\ref{EnrichProd}) holds, then an arrow $c \in Hom_{A}(I_{0},I)$, where $I_{0}$ is a unit for the product tensor structure $\times_{A}$ and $I$ is a unit for $\otimes$ (the arrow $c$ is as that of (\ref{PreCurry}.3), and is implicitly used thereby, in the below invocation) define an arrow
$$
\bar{h}(T_{0}(r),\bar{h}(\bar{J},dom(e_{3}(r,s,t)))) \times \bar{h}(h_{L}(r,s) \times h_{L}(s,t) , \bar{h}(dom(e_{3}(r,s,t)),\bar{S}) )
$$
$$
\rightarrow \bar{h}(h_{L}(r,s) \times h_{L}(s,t) , \bar{h}(T_{0}(r),T_{0}(t)))
$$

by pulling back the product maps,
$$
\bar{h}(T_{0}(r),\bar{h}(\bar{J},dom(e_{3}(r,s,t)))) \times \bar{h}(h_{L}(r,s) \times h_{L}(s,t) , \bar{h}(dom(e_{3}(r,s,t)),\bar{S}) )
$$
$$
\rightarrow \bar{h}(T_{0}(r) \times h_{L}(r,s) \times h_{L}(s,t),\bar{h}(\bar{J},dom(e_{3}(r,s,t)))) \times
$$
$$
\bar{h}(T_{0}(r) \times h_{L}(r,s) \times h_{L}(s,t),\bar{h}(dom(e_{3}(r,s,t)),\bar{S}))) \rightarrow
$$

by the lemma on the product enriched set, (\ref{EnrichProd}),
$$
\bar{h}(T_{0}(r) \times h_{L}(r,s) \times h_{L}(s,t),\bar{h}(\bar{J},dom(e_{3}(r,s,t))) \times \bar{h}(dom(e_{3}(r,s,t)),\bar{S})) \rightarrow
$$

by the lemma on composition functors, (\ref{PushPull}.3), with the product unit
$$
\bar{h}(T_{0}(r) \times h_{L}(r,s) \times h_{L}(s,t),\bar{h}(\bar{J},\bar{S})) \rightarrow
$$

by the restriction $u_{c}^{*}$ from the arrow $u_{c} : \bar{I} \rightarrow \bar{J}$, with the product unit,
$$
\bar{h}(T_{0}(r) \times h_{L}(r,s) \times h_{L}(s,t),T_{0}(t)) \rightarrow
$$

by the pre-Curry arrow, (\ref{PreCurry}.3),
$$
\bar{h}(h_{L}(r,s) \times h_{L}(s,t),\bar{h}(T_{0}(r),T_{0}(t))).
$$

\ref{Lens}.4.1. For any colimit $(\bar{J}',\lambda_{J})$ of the diagram $D : (\{ 0,1,2 \} , \{ (0,1), (0,2), id_{0},... \},...)$ \newline  $\longrightarrow WE_{Assoc(sk)}(A,\otimes)$ with two non-identity arrows $u_{d},u_{c} : \bar{I} \rightarrow \bar{J}$ and three objects, so that
$$
D(0) = \bar{I} \text{ and } D(1) = D(2) = \bar{J} \text{ and }
$$
$$
D((0,1)) = u_{c} \text{ and } D((0,2)) = u_{d}.
$$

For any arrow of enriched sets $d : \bar{J} \rightarrow \bar{J}'$, using the notation of (\ref{Lens}.2), we temporarily define a sub-enriched set
$$
H_{comp(r,s,t)} :_{t}= \langle \{ x \in Ob(\bar{h}(\bar{J}',\bar{S})) ;
$$
$$
\ulcorner\ulcorner \lambda_{J}(1)^{*}(x) \in Ob(H_{(r,s)}) \urcorner \text{ and } \ulcorner \lambda_{J}(2)^{*}(x) \in Ob(H_{(s,t)}) \urcorner\urcorner \} \rangle
$$
$$
\hookrightarrow \bar{h}(\bar{J}',\bar{S})
$$

take the arrow
$$
u_{dd*} : H_{comp(r,s,t)} \rightarrow T_{0}(r)
$$

to be the restriction, and the arrow
$$
u_{dd}' : T_{0}(r) \rightarrow H_{comp(r,s,t)}
$$

to be the left (right for r-constellations) Kan extension of $u_{dd*}$. Applying the arrow of enriched sets of (\ref{Lens}.4) to objects constructed from this, temporarily defining
$$
D :_{t}= \bar{h}(T_{0}(r),\bar{h}(\bar{J},dom(e_{3}(r,s,t)))) (e_{3}(r,s,t)_{*} \cdot u_{d}', d^{*} \cdot (e_{1}(r,s,t) \sqcup_{\lambda_{J}} e_{2}(r,s,t))_{*} \cdot u_{dd}')
$$

and
$$
C :_{t}= \bar{h}(h_{L}(r,s) \times h_{L}(s,t) , \bar{h}(dom(e_{3}(r,s,t),\bar{S}) )(e_{1}(r,s,t) \sqcup_{\lambda_{J}} e_{2}(r,s,t))^{*} \cdot K , id)
$$

we obtain an arrow in $A$
$$
D \times_{A} C \rightarrow \bar{h}(h_{L}(r,s) \times h_{L}(s,t) , \bar{h}(T_{0}(r),T_{0}(t)))(\circ \cdot (T_{10}(r,s) \times T_{10}(s,t)) , T_{10}(r,t) \cdot \circ_{L})
$$

where $u_{d}' = u_{d(r,t)}'$ as for $H_{(r,t)}$ and $K$ is the Lan (Ran) functor used for composition. 

\ref{Lens}.4.2. In particular, given an $(sk)$-unit $(I_{0},\lambda_{0},\rho_{0}) \in Ob(A)$ a pair of arrows $(I_{0} \rightarrow D),(I_{0} \rightarrow C) \in Arr(A)$ i.e. ``natural transformations"
$$
e_{3}(r,s,t)_{*} \cdot u_{d}' \rightarrow d^{*} \cdot (e_{1}(r,s,t) \sqcup e_{2}(r,s,t))_{*} \cdot u_{dd}'
$$

and
$$
(e_{1}(r,s,t) \sqcup_{\lambda_{J}} e_{2}(r,s,t))^{*} \cdot K \rightarrow id
$$

determine an arrow
$$
I_{0} \rightarrow \bar{h}(h_{L}(r,s) \times h_{L}(s,t) , \bar{h}(T_{0}(r),T_{0}(t)))(\circ \cdot (T_{10}(r,s) \times T_{10}(s,t)) , T_{10}(r,t) \cdot \circ_{L})
$$

i.e. a ``natural transformation"
$$
\circ(T_{0}(r),T_{0}(s),T_{0}(t)) \cdot (T_{10}(r,s) \times T_{10}(s,t)) \rightarrow T_{10}(r,t) \cdot \circ_{L}(r,s,t)
$$

so that, if the former two arrows are $(sk)$-equivalences, the latter is an $(sk)$-equivalence.

\ref{Lens}.4.3. If, for every $r,s,t \in L$, there are such $(sk)$-equivalences as in (\ref{Lens}.4.2), then (\ref{Lens}.1),(\ref{Lens}.2) and (\ref{Lens}.3) define an arrow $T : \bar{L} \rightarrow \bar{WE}({WE_{Assoc(sk)}(A,\otimes)})$. An arrow $T^{o} : \bar{L}^{opp} \rightarrow \bar{WE}({WE_{Assoc(sk)}(A,\otimes)})$ would be similarly defined, differing in that the ``functors" $T_{10}(\phi)$ are defined by taking the Kan extensions of functors with ``images" in the codomain enriched sets.

\begin{proof}
Articles (\ref{Lens}.1) and (\ref{Lens}.2) are definitions.

The proof of (\ref{Lens}.3) is the argument that the objects $p : x \rightarrow \prod_{i \in dom(e_{1}(s,t,t)} h_{S}(\phi(i),\psi_{i})$ naturally determine objects in the $P$-category which determines the object \newline $h_{\bar{h}(T_{0}(s),T_{0}(t))}(T_{10}(\phi),T_{10}(\psi)) \in Ob(A)$. The arrows $T_{10}(\phi)(x,y)$ and $T_{10}(\psi)(x,y)$ factor through $\phi$ and $\psi$ respectively so that their tensors with the component arrows for $h_{...}(\phi,\psi)$ are commutative with respect to the composition in $\bar{S}$.

Article (\ref{Lens}.4) is a definition composed of the listed lemmas. Its subsections are corollaries of the existence of the constructed arrow.
\end{proof}

\sss{Corollary} If the condition (i) of (\ref{AssocStell}) is satisfied, then any sub-enriched set of $(n+1)-\mathfrak{Cat}$ generated by the image of an arrow of enriched sets of the form $T$ or $T^{opp}$ of (\ref{Lens}), for $(A,\otimes) = (n-\mathfrak{Cat},\times)$, i.e. whose $k$-arrows are the images of those of the chosen $L$, is $(\bar{sk})$-associative.

\sss{Remark} Roughly speaking, the first arrow of (\ref{Lens}.4.2) expresses a ``natural transformation" between the arrow of enriched sets $T_{0}(r) \rightarrow \bar{h}(\bar{J},codom(e_{3}(r,s,t)))$ which finds the closest arrow factoring $\bar{J}$ directly through $dom(e_{3}(r,s,t))$, and that which finds the closest arrow factoring $\bar{J}$ through $dom(e_{1}(r,s,t))$ and $dom(e_{2}(r,s,t))$ together, through a duplication of itself in $\bar{J}'$. It could be thought of as a constellation data in miniature.

The second arrow of (\ref{Lens}.4.2) expresses a ``natural transformation" from the composition of the restriction with the Kan extension (that used for the composition) to the identity functor.

\sss{Remark} I imagine that one might construct functors $End : C \rightarrow WE(A,\otimes)$, borrowing the enrichment of $WE(WE(A,\otimes),\times)$ by the arrows $L,L^{o} : C,C^{opp} \rightarrow WE(A,\otimes)$, and mapping $c$ to $\bar{h}(L^{o}(c),L(c))$ or $\bar{h}(L(c),L(c))$.

\newpage

\end{document}